\newcommand{\otoprule}{\midrule[\heavyrulewidth]} 
\newtheorem{lemma}{Lemma}
\newtheorem{theorem}[lemma]{Theorem}
\newtheorem*{theorem*}{Theorem}
\newtheorem{proposition}[lemma]{Proposition}
\newtheorem{corollary}[lemma]{Corollary}
\theoremstyle{definition}
\newtheorem{definition}[lemma]{Definition}
\newtheorem*{definition*}{Definition}
\newtheorem{example}[lemma]{Example}
\newtheorem{remark}[lemma]{Remark}
\newcommand{\Z}{\mathbb{Z}}
\newcommand{\cp}{\mathbin{\Box}}
\newcommand{\join}{\star}
\newcommand{\pt}{\mathit{pt}}
\renewcommand{\geq}{\geqslant}
\DeclareMathOperator{\MH}{MH}
\DeclareMathOperator{\MC}{MC}
\DeclareMathOperator{\MS}{MS}
\DeclareMathOperator{\HH}{H}
\DeclareMathOperator{\rank}{rank}
\DeclareMathOperator{\card}{card}
\title{Categorifying the magnitude of a graph}
\author{Richard Hepworth}
\address[R.~Hepworth]{Institute of Mathematics\\
University of Aberdeen\\
Aberdeen AB24 3UE\\
United Kingdom}
\email{r.hepworth@abdn.ac.uk}
\author{Simon Willerton}
\address[S.~Willerton]{School of Mathematics and Statistics,
Hicks Building, 
University of Sheffield, 
Sheffield,
S3 7RH, UK
}
\email{s.willerton@sheffield.ac.uk}
\subjclass[2010]{55N35, 05C31}
\begin{document}

\begin{abstract}
The magnitude of a graph can be thought of as an integer power 
series associated to a graph; Leinster introduced it using his idea of 
magnitude of a metric space.  Here we introduce a bigraded 
homology theory for graphs which has the magnitude as its graded 
Euler characteristic.  This is a categorification of the magnitude in the 
same spirit as Khovanov homology is a categorification of the Jones 
polynomial.
We show how properties of magnitude proved by Leinster categorify 
to properties such as a K\"unneth Theorem and a Mayer-Vietoris 
Theorem.
We prove that joins of graphs have their homology supported on the diagonal.
Finally, we give various computer calculated examples.
\end{abstract}

\maketitle
\tableofcontents

\section{Introduction}
\subsection{Overview}
The magnitude of a finite metric space was introduced by 
Leinster~\cite{LeinsterMetricSpace} by analogy with 
his notion of the Euler characteristic of a 
category~\cite{LeinsterEulerCharCategory}.
This was found to have connections with topics as varied as
intrinsic volumes~\cite{LeinsterWillertonAsymptotic}, 
biodiversity~\cite{LeinsterCobboldDiversity}, 
potential theory~\cite{MeckesMagnitudeDimensions}, 
Minkowski dimension~\cite{MeckesMagnitudeDimensions} 
and curvature~\cite{WillertonHomogeneous}.

This invariant of finite metric spaces can be used to construct an
invariant of finite graphs.
For $G$ a finite graph and $t>0$, 
we equip the set of vertices of $G$ with 
the shortest path metric on $G$ where each edge is given length $t$.
Leinster~\cite{LeinsterGraph} showed that as a function of $t$, 
the magnitude of this metric space is a rational function in $e^{-t}$.
Writing $q=e^{-t}$, the magnitude can be expanded as a formal power series 
in $q$ and Leinster proved that this power series has integer coefficients.
It is this integer power series that we will take as the magnitude of $G$, and we will write it as $\#G$.
For example, the five-cycle graph has magnitude which starts as follows:
  \[\#C_5=5-10q+10q^2-20q^4+40q^5-40q^6-80q^8+\cdots.\] 

In this paper we will categorify the magnitude of graphs
by defining \emph{magnitude homology of graphs}. 
This is a bigraded homology theory $\MH_{\ast,\ast}$.
It is functorial with respect to maps of graphs
that send vertices to vertices and preserve or contract edges,
and its graded Euler characteristic recovers the magnitude: 
\begin{equation}\label{Categorify}
	\#G = \sum_{k,l\geqslant 0} (-1)^k\cdot \rank
	\bigl(\MH_{k,l}(G)\bigr)\cdot  q^l= \sum_{l\geqslant 0}  \chi\bigl(\MH_{\ast,l}(G)\bigr)\cdot  q^l.
\end{equation}
Thus our categorification is in exactly the same spirit as
as Khovanov's categorification
of the Jones polynomial~\cite{Khovanov} and Ozsvath-Szabo's categorification
of the Alexander polynomial~\cite{OzsvathSzabo}.
As an example, the ranks of the magnitude homology groups of the 
five-cycle graph are given in Table~\ref{TableFiveCycle}.  
You can verify that the Euler characteristic of each of 
the first few rows is the corresponding coefficient in $\# C_5$.

\begin{table}
\begin{center}
\begin{tikzpicture}[scale=0.5]
\foreach \x in {0,72,...,288}
    \draw (\x+90:2cm) -- (\x+72+90:2cm);
%\draw (0:1cm) -- (180:1cm);
\foreach \x in {0,72,...,288}
    \draw [fill=red](\x+90:2cm) circle (0.1cm);
\end{tikzpicture}
%
%\quad
%
\footnotesize
\begin{tabular}{rrrrrrrrrrrrrr}
&&&&&&&$k$\\
&&0&1&2&3&4&5&6&7&8&9&10&11\\
\otoprule                                                                 
&0 & 5\\
& 1   & &     10                   \\                                          
 &2     & &&         10             \\                                          
& 3    &&&           10  &  10       \\                                          
& 4         &&&&            30  &  10  \\                                         
& 5          &&&&&                 50   & 10                          \\           
$l$& 6          &&&&&                 20  &  70   & 10                    \\           
& 7           &&&&&&                      80  &  90  &  10              \\           
& 8           &&&&&&&                           180  & 110 &   10      \\             
& 9            &&&&&&&                          40 &  320  & 130  &  10 \\            
&10           &&&&&&&&                                 200 &  500  & 150 &   10 \\      
&11           &&&&&&&&&                                       560  & 720 &  170  &  10\\ 
\bottomrule
\smallskip
\end{tabular}
\end{center}
\caption{The ranks of $\MH_{k,l}(C_5)$, the magnitude homology groups of the pictured five-cycle graph, as computed using Sage.}
\label{TableFiveCycle}
\end{table}

Being a bigraded abelian group rather than just a power series, 
the magnitude homology has a richer structure than the magnitude.
For example, functoriality means that for a given graph its
magnitude homology is equipped with an action of its automorphism group.
We will see below that 
various properties of magnitude described by Leinster in~\cite{LeinsterGraph} 
are shadows of properties of magnitude homology.

Leinster has given a counting formula~\cite[Proposition~3.9]{LeinsterGraph}
for the magnitude.  
It expresses the coefficient of $q^{l}$ in $\#G$ as
  \[\sum_{k\geq 0}(-1)^{k}\Bigl | \bigl\{(x_{0},\dots, x_{k})\, \colon\, x_{i}\in V(G),\,x_{i}\neq x_{i+1}, \,\textstyle\sum_{i=0}^{k-1}d(x_{i},x_{i+1})=l\bigr\}\Bigr|.\]
This expression is precisely the alternating sum of the ranks 
of the magnitude chain groups,
and in general these ranks are considerably larger 
than the ranks of magnitude homology groups.  
In Table~\ref{TableFiveCycleChains}, the ranks of the magnitude chain 
groups for the five-cycle graph are given and this should be compared 
with Table~\ref{TableFiveCycle}.  
Again the Euler characteristic of each row gives a coefficient 
of the magnitude, but the terms grow exponentially as you move down diagonally.
This means that the magnitude homology groups are counting something 
much subtler than Leinster's formula is.

\begin{table}
\scriptsize
\begin{center}
\begin{tabular}{rrrrrrrrrrrrrr}
&&&&&&&$k$\\
&&0&1&2&3&4&5&6&7&8&9&10&11\\
\otoprule                                                                 
&0 & 5\\
& 1   & &     10                   \\                                          
 &2     & &10&         20             \\                                          
& 3    &&&           40  &  40       \\                                          
& 4         &&&20&            120  &  80  \\                                         
& 5          &&&& 120&                 320   & 160                          \\           
$l$& 6          &&&&40 &                 480  &  800   & 320                    \\           
& 7           &&&&& 320 &  1600 &  1920  &  640         \\           
& 8           &&&&&80  & 1600 &  4800 &  4480  & 1280              \\             
& 9            &&&&&&      800  & 6400 & 13440 & 10240 &  2560   \\            
&10           &&&&&&     160 &  4800  &22400 & 35840 & 23040 &  5120 \\      
&11           &&&&&&&      1920 & 22400 & 71680 & 92160  &51200 & 10240        \\ 
\bottomrule
\smallskip
\end{tabular}
\end{center}
\caption{The ranks of $\MC_{k,l}(C_5)$ the magnitude chain groups of the five-cycle graph, as computed using Sage.}
\label{TableFiveCycleChains}
\end{table}

\subsection{Categorifying properties of the magnitude}
Many of the properties of the magnitude that were proved
by Leinster in~\cite{LeinsterGraph} can be categorified,
meaning that they follow from properties of the magnitude
homology upon taking the graded Euler characteristic.
The categorified properties are subtler, being properties of
the homology rather than its Euler characteristic,
and are correspondingly harder to prove.
We list the categorifications here.

\subsubsection{Disjoint unions}
Leinster shows that magnitude is additive 
with respect to the disjoint union of graphs~\cite[Lemma~3.5]{LeinsterGraph}:
 \[\#(G\sqcup H) = \#G + \#H\]
Our categorification of this, Proposition~\ref{disjoint}, is the additivity of the magnitude homology:
 \[\MH_{\ast,\ast}(G\sqcup H) \cong \MH_{\ast,\ast}(G) \oplus \MH_{\ast,\ast}(H).\]
Taking the graded Euler characteristic of both sides recovers
Leinster's formula for $\#(G\sqcup H)$.

\subsubsection{Products}
Leinster shows that magnitude is multiplicative with respect
to the cartesian product $\cp$ of graphs~\cite[Lemma~3.6]{LeinsterGraph}:
 \[\#(G\cp H) = \#G\cdot \#H.\]
The categorification of this is Theorem~\ref{kunneth:theorem}, a K\"unneth Theorem which says that there is a  non-naturally split, short exact sequence:
\begin{multline*}
		 0\to
		 \MH_{\ast,\ast}(G)\otimes \MH_{\ast,\ast}(H)
		\to
		\MH_{\ast,\ast}(G\cp H)\\
		\to
		\mathrm{Tor}\bigl(\MH_{\ast+1,\ast}(G), \MH_{\ast,\ast}(H)\bigr)
		\to 0.
\end{multline*}
Taking the graded Euler characteristic of this sequence 
recovers the multiplicativity of the magnitude.  
Moreover, if either $G$ or $H$ has torsion-free magnitude homology,
then this sequence reduces to an isomorphism 
$\MH_{\ast,\ast}(G)\otimes \MH_{\ast,\ast}(H)\cong\MH_{\ast,\ast}(G\cp H)$.  
At the time of writing, despite quite a bit of computation, we don't know whether any graphs have torsion in their magnitude homology.

\subsubsection{Unions}
Magnitude can be extended to infinite metric 
spaces~\cite{MeckesPositiveDefinite} and 
the Convexity Conjecture~\cite{LeinsterWillertonAsymptotic} 
gives an explicit formula for the magnitude of compact, convex subsets of $\mathbb{R}^n$.
A corollary of the conjecture would be 
that the magnitude of compact, convex subsets satisfies an inclusion-exclusion formula.  
Leinster showed that an analogue of this corollary holds for graphs. 
 If $(X;G,H)$ is a projecting decomposition  
(see Section~\ref{SectionMayerVietoris}), 
so that in particular, $X=G\cup H$, then the inclusion-exclusion 
formula holds for magnitude~\cite[Theorem~4.9]{LeinsterGraph}:
  \[\#X = \#G +\#H - \# (G\cap H).\]
Our categorification of this result, Theorem~\ref{MV:Theorem}, is that 
if $(X;G,H)$ is a projecting decomposition,
then there is a naturally split short exact sequence
  \[0\to \MH_{\ast,\ast}(G\cap H)\to \MH_{\ast,\ast}(G)\oplus\MH_{\ast,\ast}(H)\to \MH_{\ast,\ast}(X)\to 0\] 
(which we think of as a form of Mayer-Vietoris sequence)
and consequently a natural isomorphism
   \[
   \MH_{\ast,\ast}(G)\oplus\MH_{\ast,\ast}(H)
     \cong
     \MH_{\ast,\ast}(X)\oplus  \MH_{\ast,\ast}(G\cap H).
     \]
Taking the Euler characteristic recovers the inclusion-exclusion formula for magnitude.

\subsection{Diagonality}
Leinster~\cite{LeinsternCafeTutte} noted  many  examples of graphs which had magnitude with alternating coefficients; 
these examples included complete graphs, complete bipartite graphs, forests and graphs with up to four vertices.  
This phenomenon can be explained in terms of magnitude homology.
Call a graph $G$ \emph{diagonal} if $\MH_{k,l}(G)=0$ if $k\neq l$.
In this case the formula~\eqref{Categorify} becomes
\[\#G=\sum_{l\geq 0}(-1)^{l}\cdot\rank\MH_{l,l}(G)\cdot q^{l},\]
and shows in particular that the coefficients of the magnitude 
alternate in sign.
Recall that the join $G\join H$ of graphs $G$ and $H$ is obtained by adding an edge between every vertex of $G$ and every vertex of $H$.  
This is a very drastic operation, for instance the diameter of the resulting join is at most $2$.  
We prove in Theorem~\ref{join:theorem} that any join $G\join H$ 
of non-empty graphs has diagonal magnitude homology.  
This tells us immediately that complete graphs and complete bipartite graphs are diagonal.   
Together with the other properties of magnitude homology mentioned above,
we recover the alternating magnitude property of all the graphs 
noted by Leinster, as well as many more.

  \subsection{The power series expansion and asymptotics}  It is worth
commenting here on how the magnitude of graphs fits in with the general
theory of magnitude of metric spaces.
One nice class of metric spaces, as far as magnitude is concerned, is
the class of subsets of Euclidean space  (this is a
subclass of the class of positive definite metric spaces~\cite{MeckesPositiveDefinite}).
For $A$ a finite metric space let $tA$ be $A$ with the metric scaled
by a factor of $t$.  If $X$ is a non-empty finite subset of Euclidean space, the 
\emph{magnitude function} $|tX|$ is defined for all
$t>0$, satisfies $t\ge 1$  and is continuous on $(0,\infty)$; see Corollaries~2.4.5
and~2.5.4 of~\cite{LeinsterMetricSpace} and Corollary~5.5
of~\cite{MeckesMagnitudeDimensions}.  It is not known whether the
magnitude function of such a space is increasing or not --- see the discussion
after Corollary~6.2 of~\cite{MeckesMagnitudeDimensions} --- but certainly all
computed examples are increasing.   However, it is known~\cite%[Theorems~2 and~3]
{LeinsterWillertonAsymptotic} that for any finite metric 
space $A$ and for $t\gg 0$ the magnitude $\left|tA\right|$ is defined 
and increasing in $t$, with $\left|tA\right|\to \card(A)$ as $t\to \infty$.
 A random and seemingly typical example of a subset of Euclidean space is given
 in Figure~\ref{FigurePosDefAndGraph}.

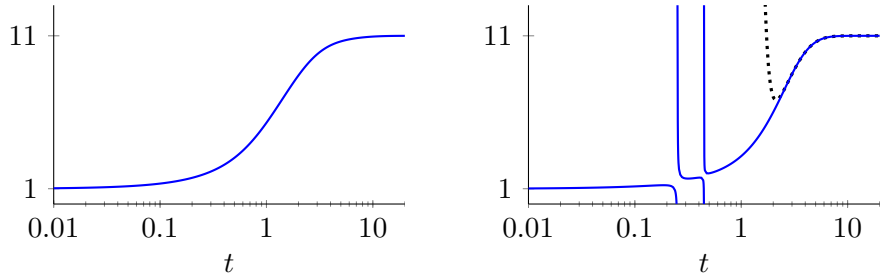
\begin{figure}[t]
\begin{center}
\begin{tikzpicture}
\begin{semilogxaxis}[%tick label style={font=\footnotesize},
width = 0.49\textwidth,
%axis equal image=true,
axis x line=bottom, axis y line = left,
xmin=0.01,ymin=0, ymax=13, xmax=20,
xtick={0.01,0.1,1,10}, xticklabels={0.01,$0.1$,$1$ ,10},
ytick={1,11}, yticklabels={1,11},
x axis line style={style = -},y axis line style={style = -},
xlabel=$t$,%ylabel=$\RMS$,
yscale=0.7,
legend style={at={(1,0.1)},anchor=south east}
]
\addplot[mark=none,blue,thick] file {PosDefMag.dat};
%\addlegendentry{$\RMS(tR)$}
%{\addplot[mark=none,black, very thin] expression {3};}
\end{semilogxaxis}
\end{tikzpicture}
\qquad
\begin{tikzpicture}
\begin{semilogxaxis}[%tick label style={font=\footnotesize},
width = 0.49\textwidth,
%axis equal image=true,
axis x line=bottom, axis y line = left,
xmin=0.01,ymin=0, ymax=13, xmax=20,
xtick={0.01,0.1,1,10}, xticklabels={0.01,$0.1$,$1$ ,10},
ytick={1,11}, yticklabels={1,11},
x axis line style={style = -},y axis line style={style = -},
xlabel=$t$,%ylabel=$\RMS$,
yscale=0.7,
]
\addplot[mark=none,black,very thick,dotted] file {RandGraphApprox200.dat};
%\addplot[mark=none,blue,thick] file {code/RandGraphMag200.dat};
\addplot[mark=none,blue,thick] file {RandGraphMag1.dat};
\addplot[mark=none,blue,thick] file {RandGraphMag2.dat};
\addplot[mark=none,blue,thick] file {RandGraphMag3.dat};
\end{semilogxaxis}
\end{tikzpicture}
\end{center}
\caption{The left hand picture shows the magnitude function for a certain $11$-point subset of $\mathbb{R}^{2}$.  
The right hand picture shows the magnitude function for a certain graph with $11$ vertices and $33$ edges.  
The dotted line shows the approximation to the magnitude function using the first seven terms of the expansion in powers of $q=e^{-t}$.}
\label{FigurePosDefAndGraph}
\end{figure}

The metric space obtained from a graph is generally not isometric to a subset of Euclidean space and  the magnitude function $\left|tG\right|$ will have many singularities, see Figure~\ref{FigurePosDefAndGraph}.  
 However, we know from the  above that the magnitude will eventually become 
 nice in that the magnitude $\left|tG\right|$ is defined and increasing to the number of vertices of $G$ as $t\to\infty$.  
 We defined the magnitude of a graph $G$ to be the formal power series about $q=0$, which, as $q=e^{-t}$, corresponds to expanding in negative exponentials near $t=\infty$, this avoids the bad behaviour of the magnitude function.  
 Again, see Figure~\ref{FigurePosDefAndGraph}. 

This perspective can be compared with previous examinations of asymptotics of the magnitude of infinite spaces in~\cite{LeinsterWillertonAsymptotic} and~\cite{WillertonHomogeneous}.  
There polynomial contributions to the aymptotics of the magnitude were shown to come from things like volume, surface area, total scalar curvature and Euler characteristic, which are obtained by integrating local phenomena such as curvature.  
Here we are looking at exponentially decaying contributions  to the asymptotics and these come from counting global phenomena, such as certain paths of a given length.

\subsection{Further directions and open questions}  
The results described above demonstrate that magnitude
homology is natural, nontrivial, and can shed light on properties
of the magnitude that are otherwise unexplained.  
Here are a few questions and avenues for further study.
\begin{itemize}
\item \label{ishomologybetter}
There are examples of non-isomorphic graphs with
isomorphic magnitude homology, for example any two trees
with the same number of vertices.
Are there graphs with the same magnitude 
but different magnitude homology groups?
\item 
Is there a graph whose magnitude homology contains torsion?
\item 
Leinster showed that if two graphs differ by a Whitney
twist with adjacent gluing points, then their magnitudes are equal.
Do two graphs related by a Whitney twist have isomorphic magnitude homology?
\item 
Prove the magnitude homology of cyclic graphs is as is conjectured in Appendix~\ref{section:CyclicGraphs}.
\item 
Computations suggest that the icosahedral graph
(i.e.~the $1$-skeleton of the icosahedron)
has diagonal homology.  
We have not been able to apply any of our techniques
for proving that graphs are diagonal in this case,
and in particular the graph is not a join.
Is the icosahedral graph diagonal, and if so why?
\item 
We anticipate that there is a theory of magnitude \emph{cohomology}
dual to the homological theory developed in this paper.
As with cohomology of spaces, it should be possible to
equip this theory with a product structure
\item
One can define $\MH_{\ast,\ast}(G)$ as the
reduced homology of a sequence of pointed simplicial
sets.  This is used in Section~\ref{KunnethProof:Section},
see in particular Remark~\ref{simplicialapproach}.
We have chosen not to emphasise this approach in the present
paper, but there may be advantages to doing so in future.
\end{itemize}

\subsection{Organisation of the paper}
The paper is organised as follows.
Sections~\ref{definition:section} and~\ref{SectionInducedMaps}
define magnitude homology as a functor and prove that
it categorifies the magnitude.  Sections~\ref{SectionDisjointUnions},
\ref{SectionCartesianProducts} and~\ref{SectionMayerVietoris}
cover the magnitude homology of disjoint unions,
cartesian products and unions, describing in detail
the categorified properties of the magnitude discussed above.
Then section~\ref{SectionDiagonalGraphs} discusses diagonal
graphs, in particular the fact that joins are diagonal.
Sections~\ref{SectionExcision}, \ref{join:proof:section}
and~\ref{KunnethProof:Section} give some lengthy
deferred proofs.  Finally, Appendix~\ref{appendix}
records and discusses some computer calculations of magnitude
homology.

\subsection{Acknowledgements} 
We would like to thank Tom Leinster for useful discussions
throughout the development of this work,
Dave Benson for some useful comments,
and James Cranch for translating our software into Sage.

\section{The definition of magnitude homology}
\label{definition:section}

In this section we define the magnitude homology of
a graph $G$, give some very basic examples and
properties, and establish the relationship between
a graph's magnitude homology and its magnitude.
First we state our conventions, which are taken
directly from~\cite{LeinsterGraph}.
By a \emph{graph} we mean a finite undirected graph
with no loops or multiple edges.  The set of vertices of a graph $G$
is denoted $V(G)$ and the set of edges is denoted $E(G)$.
If $x$ and $y$ are vertices of a graph $G$, then the \emph{distance} $d_G(x,y)$
(or simply $d(x,y)$ where it will not cause confusion)
is defined to be the length of a shortest edge path from $x$ to $y$.
If $x$ and $y$ lie in different components of $G$ then $d_G(x,y)=\infty$.
Thus $d_G$ is a metric on $V(G)$, so long as we allow metrics to
take value $\infty$ on certain pairs.

\begin{definition}
Let $G$ be a graph.
The \emph{length} of a tuple $(x_0,\ldots,x_k)$ of 
vertices of $G$ is 
\[
	\ell(x_0,\ldots,x_k) = d(x_0,x_1)+\cdots+d(x_{k-1},x_k).
\]
For $i=0,\ldots,k$ the triangle inequality
guarantees that 
\begin{equation}\label{inequality:one}
	\ell (x_0,\ldots,\widehat{x_i},\ldots,x_k)
	\leqslant
	\ell(x_0,\ldots,x_{k})
\end{equation}
where a `hat' indicates a term that has been omitted.
\end{definition}

\begin{definition}[The magnitude chain complex]
The \emph{magnitude chain complex} $\MC_{\ast,\ast}(G)$ of a graph $G$
is the direct sum of chain complexes
\[
	\bigoplus_{l\geq 0}\MC_{\ast,l}(G)
\]
where the chain complex $\MC_{\ast,l}$ is defined as follows.
The group $\MC_{k,l}(G)$
is freely generated by tuples $(x_0,\ldots,x_k)$
of vertices of $G$ satisfying 
$x_0\neq x_1\neq\cdots\neq x_k$ and
$\ell(x_0,\ldots, x_k)=l$.
The differential
\[
	\partial\colon \MC_{k,l}(G)\to \MC_{k-1,l}(G)
\]
is the alternating sum
%$\partial = \sum_{i=1}^{k-1}(-1)^i \partial_i$
$\partial=\partial_1-\partial_2+\cdots+(-1)^{k-1}\partial_{k-1}$
of the maps defined by 
\[
	\partial_{i}(x_0,\ldots,x_k)
	=
	\begin{cases}
		(x_0,\ldots,\widehat{x_i},\ldots,x_k)
		&
		\mathrm{if\ }
		\ell(x_0,\ldots,\widehat{x_i},\ldots,x_k)=l,
		\\
		0
		&
		\mathrm{otherwise}.
	\end{cases}
\]
It is shown in Lemma~\ref{differential:lemma} 
below that $\partial\circ\partial=0$,
so that each $\MC_{\ast,l}(G)$ is indeed a chain complex.
\end{definition}

\begin{remark}
The condition
$
	\ell(x_0,\ldots\widehat{x_i},\ldots,x_k)
	=l
$
appearing in the definition of the differential
can be replaced with the equivalent condition
$
	d(x_{i-1},x_i)+d(x_i,x_{i+1})=d(x_{i-1},x_{i+1}).
$
\end{remark}

\begin{definition}[Magnitude homology]
The \emph{magnitude homology} $\MH_{\ast,\ast}(G)$ of a graph $G$
is the bigraded abelian group defined by
\[
	\MH_{k,l}(G) = \HH_k(\MC_{\ast,l})
\]
for $k,l\geqslant 0$.
\end{definition}

\begin{example}[Complete graphs]
\label{complete:example}
Let $K_n$ denote the complete graph on $n$ vertices.
Then for $l\geqslant 0$, $\MH_{l,l}(K_n)$ is
the free abelian group on $(l+1)$-tuples $(x_0,\ldots,x_l)$
of vertices of $K_n$ satisfying $x_0\neq\cdots\neq x_l$,
and $\MH_{k,l}(K_n) = 0$ if $k\neq l$.
This holds because $\MC_{\ast,\ast}(K_n)$ admits 
exactly the same description, as $d(x_{i},x_{j})=1$ for $i\neq j$, and in particular 
its differentials are all zero.
\end{example}

\begin{example}[Discrete graphs]
\label{discrete:example}
Let $E_n$ denote the discrete graph on $n$ vertices, meaning that it has no edges.
Then $\MH_{0,0}(E_n)$ is the free abelian group on the
vertices of $E_n$, and all other magnitude homology groups
of $E_n$ vanish.  Again, this follows because the magnitude chain
complex admits exactly the same description.
\end{example}

In the two examples above,
the magnitude homology was concentrated on the
diagonal, by which we mean that $\MH_{k,l}(G)=0$
for $k\neq l$.  In Table~\ref{TableFiveCycle} we see that according to computer calculations this does not seem to always be the case,
and we verify this in the example below.

\begin{example}[The cyclic graph $C_5$]
\label{five-cycle:example}
Let $C_5$ denote the cyclic graph with $5$
vertices.  Then $\MH_{2,3}(C_5)$ is isomorphic to the free
abelian group spanned by the oriented edges of $C_5$.
In particular, it is nonzero.

Given an oriented edge $(a_1,a_2)$ of $C_5$,
we will list the vertices of $C_5$ as $a_1,a_2,a_3,a_4,a_5$
by starting at $a_1$, moving to $a_2$, and then
continuing round the graph in the same direction.
Then the generators of $\MC_{2,3}(C_5)$ 
all have one of the four forms
\[
	(a_1,a_2,a_4),
	\quad
	(a_1,a_3,a_4),
	\quad
	(a_1,a_2,a_5),
	\quad
	(a_1,a_3,a_2)
\]
for a uniquely determined oriented edge $(a_1,a_2)$,
and they are all cycles.
Similarly, the generators of $\MC_{3,3}(C_5)$ 
all have one of the four forms
\[
	(a_1,a_2,a_3,a_4),
	\quad
	(a_1,a_2,a_3,a_2),
	\quad
	(a_1,a_2,a_1,a_2),
	\quad
	(a_1,a_2,a_1,a_5)
\]
for a uniquely determined oriented edge $(a_1,a_2)$,
and their boundaries are as follows.
\begin{align*}
\partial(a_1,a_2,a_3,a_4)&=(a_1,a_2,a_4)-(a_1,a_3,a_4)
\\
\partial(a_1,a_2,a_3,a_2)&=(a_1,a_3,a_2)
\\
\partial(a_1,a_2,a_1,a_2)&= 0
\\
\partial(a_1,a_2,a_1,a_5)&= (a_1,a_2,a_5)
\end{align*}
Thus $\MH_{2,3}(C_5)$ is freely generated by the homology
classes of the generators $(a_1,a_2,a_4)$, one for each
oriented edge of $C_5$.
\end{example}

There is an alternative approach to defining magnitude
homology that makes use of simplicial sets and filtered
objects.   We have chosen not to use that approach here in order
to make as few technical requirements of the reader as possible,
but it is discussed later in the paper.
In Section~\ref{KunnethProof:Section}
we explain how the magnitude chain complex $\MC_{\ast,l}(G)$ can be
regarded as the normalised, reduced chain complex
of a pointed simplicial set $M_l(G)$.
In particular, in Remark~\ref{filtration} we explain how there is a filtration
such that each set
$M_l(G)$ arises as a filtration quotient, and in Remark~\ref{spectral} we discuss
the spectral sequence arising from this filtration.

The next result gives the basic relationship
between magnitude homology and magnitude.
It is analogous to~\cite[Proposition~27]{Khovanov}
and~\cite[Theorem~13.3]{OzsvathSzabo}, which state that the graded Euler
characteristic of the Khovanov homology and Heegaard-Floer homology of
a link are the Jones and Alexander polynomial, respectively.
It is our justification for calling magnitude homology
a categorification of magnitude.

\begin{theorem}\label{euler:theorem}
Let $G$ be a graph.  Then
\[
	\sum_{k,l\geqslant 0} (-1)^k\cdot \mathrm{rank}(\MH_{k,l}(G))\cdot  q^l
	=
	\# G.
\]
\end{theorem}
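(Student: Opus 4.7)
The plan is a direct application of the standard principle that the Euler characteristic of a finite chain complex of finitely generated abelian groups equals the Euler characteristic of its homology, coupled with Leinster's counting formula for magnitude quoted earlier in the introduction.

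First I would fix $l \geq 0$ and check that $\MC_{\ast,l}(G)$ is a bounded complex of finitely generated free abelian groups. Finiteness of each $\MC_{k,l}(G)$ is clear since $G$ has finitely many vertices. For boundedness, note that any generator $(x_0,\ldots,x_k)$ of $\MC_{k,l}(G)$ satisfies $x_i \neq x_{i+1}$, so $d(x_i,x_{i+1}) \geq 1$ for each $i$, hence $k \leq \ell(x_0,\ldots,x_k) = l$. Thus $\MC_{k,l}(G) = 0$ for $k > l$, and the complex $\MC_{\ast,l}(G)$ has length at most $l$.

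Next I would apply the standard homological algebra fact that for a bounded chain complex $C_\ast$ of finitely generated free abelian groups one has
\[
\sum_k (-1)^k \rank C_k = \sum_k (-1)^k \rank \HH_k(C_\ast).
\]
Applied to $\MC_{\ast,l}(G)$, this yields
\[
\sum_k (-1)^k \rank \MH_{k,l}(G) = \sum_k (-1)^k \rank \MC_{k,l}(G).
\]
On the right, $\rank \MC_{k,l}(G)$ is by definition the number of tuples $(x_0,\ldots,x_k)$ of vertices of $G$ with $x_i \neq x_{i+1}$ for all $i$ and $\sum_{i=0}^{k-1} d(x_i,x_{i+1}) = l$.

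Finally I would multiply by $q^l$ and sum over $l$. The resulting double sum
\[
\sum_{l \geq 0} q^l \sum_k (-1)^k \bigl|\{(x_0,\ldots,x_k) : x_i \neq x_{i+1}, \, \textstyle\sum_i d(x_i,x_{i+1}) = l\}\bigr|
\]
is precisely Leinster's counting formula \cite[Proposition~3.9]{LeinsterGraph} for $\#G$, recalled in the introduction. This completes the identification. There is no real obstacle here beyond verifying the boundedness observation that makes the alternating sum well-defined term by term in $l$; the theorem is essentially a formal consequence of having built $\MC_{\ast,l}(G)$ so that the rank of $\MC_{k,l}(G)$ counts exactly the tuples appearing in Leinster's formula.
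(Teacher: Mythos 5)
Your proof is correct and follows essentially the same route as the paper: equate the Euler characteristic of $\MH_{\ast,l}(G)$ with that of $\MC_{\ast,l}(G)$, count the generators of the chain groups, and invoke Leinster's counting formula. The only difference is that you spell out the boundedness check $k\leqslant l$ explicitly, which the paper leaves implicit.
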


\begin{proof}
Let $\chi$ denote the ordinary Euler characteristic of chain complexes.
Then
\begin{align*}
	\sum_{k,l\geqslant 0} (-1)^k\cdot \mathrm{rank}(\MH_{k,l}(G))\cdot  q^l
	&=
	\sum_{l\geqslant 0} \chi(\MH_{\ast,l}(G))\cdot  q^l
	\\
	&=
	\sum_{l\geqslant 0} \chi(\MC_{\ast,l}(G))\cdot  q^l
	\\
	&=
	\sum_{k,l\geqslant 0} (-1)^k\cdot \mathrm{rank}(\MC_{k,l}(G))\cdot  q^l	
	\\
	&=
	\sum_{k\geqslant 0}
	(-1)^k
	\sum_{x_0\neq\cdots\neq x_k}
	q^{d(x_0,x_1)+\cdots+d(x_{k-1},x_k)}\\
	&= \#G.
\end{align*}
Here the first and third inequalities are the definition of Euler characteristic
of a graded abelian group, the second is a standard property of the Euler
characteristic, and the fourth follows by counting the generators of
$\MC_{k,l}(G)$.
The final equality now follows by~\cite[Proposition~3.9]{LeinsterGraph}.
\end{proof}

We now see some further basic properties of magnitude, these are illustrated in Table~\ref{TableFiveCycle}.  The first proposition explains that the top two entries are the number of vertices and twice the number of edges.

\begin{proposition}\label{elementary:proposition}
Let $G$ be a graph. 
Then $\MH_{0,0}(G)$ is the free abelian group
on the vertices of $G$ and $\MH_{1,1}(G)$ is the
free abelian group on the oriented edges of $G$.
\end{proposition}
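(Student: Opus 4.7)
The plan is to show both claims at the level of chain groups, by observing that the relevant neighbouring magnitude chain groups vanish for trivial length reasons, so that $\MH_{0,0}(G)=\MC_{0,0}(G)$ and $\MH_{1,1}(G)=\MC_{1,1}(G)$, and then reading off what these chain groups are from the definition.

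The key observation is that if $x\neq y$ are distinct vertices of $G$ lying in the same component then $d(x,y)\geq 1$, with equality precisely when they are joined by an edge. Consequently, any generator $(x_0,\ldots,x_k)$ of $\MC_{k,l}(G)$ satisfies
\[
	l = \ell(x_0,\ldots,x_k) = \sum_{i=1}^{k} d(x_{i-1},x_i) \geq k,
\]
so $\MC_{k,l}(G) = 0$ whenever $k > l$. In particular $\MC_{1,0}(G)=0$ and $\MC_{2,1}(G)=0$.

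For the first claim, the chain group $\MC_{0,0}(G)$ is freely generated by $1$-tuples $(x_0)$ of length $0$, which are exactly the vertices of $G$. Since there is no incoming differential from $\MC_{1,0}(G)=0$ and no outgoing differential in bidegree $(0,0)$, we have $\MH_{0,0}(G)=\MC_{0,0}(G)$, which is the free abelian group on $V(G)$.

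For the second claim, $\MC_{1,1}(G)$ is freely generated by pairs $(x_0,x_1)$ with $x_0\neq x_1$ and $d(x_0,x_1)=1$, and these are precisely the oriented edges of $G$. The outgoing differential lands in $\MC_{0,1}(G)$, which is zero because a $1$-tuple has length $0\neq 1$, and the incoming differential comes from $\MC_{2,1}(G)=0$ by the estimate above. Hence $\MH_{1,1}(G)=\MC_{1,1}(G)$ is the free abelian group on the oriented edges of $G$. There is no serious obstacle here; the entire argument is the length-versus-degree inequality $l\geq k$, which makes both homology groups coincide with their chain groups.
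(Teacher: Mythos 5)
Your proof is correct and follows essentially the same route as the paper: both identify the chain groups $\MC_{0,0}(G)$ and $\MC_{1,1}(G)$ with the vertices and oriented edges respectively, and both observe that every differential into or out of these bidegrees has zero domain or range (you justify this via the inequality $k\leqslant l$ and the vanishing of $\MC_{0,1}(G)$, which is exactly what the paper's one-line argument implicitly relies on). No issues.
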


\begin{proof}
The same properties hold trivially for chains,
and all differentials involving the terms 
$\MC_{0,0}(G)$ and $\MC_{1,1}(G)$ are zero
(having zero domain or range), so the properties hold
for homology.
\end{proof}

The next proposition explains why the table is lower triangular and why the diameter of the $5$-cycle being $2$ restricts the non-zero entries to being reasonably close to the diagonal.

\begin{proposition}
Let $G$ be a graph and suppose that $\MH_{k,l}(G)\neq 0$.
Then
\[
	k\leqslant l.
\]
Furthermore, if $G$ has diameter $d$, then
\[
	\frac{l}{d}\leqslant k
\]
and moreover
\[
	\frac{l}{d}< k
\]
if $d>1$ and $l>0$.
\end{proposition}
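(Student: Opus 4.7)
The plan is to establish all three bounds at the level of the magnitude chain complex $\MC_{k,l}(G)$. Recall that generators of $\MC_{k,l}(G)$ are tuples $(x_0,\dots,x_k)$ with $x_0\neq x_1\neq\cdots\neq x_k$ and $\sum_{i=0}^{k-1}d(x_i,x_{i+1})=l$.

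The first two inequalities are immediate from inspecting this length constraint. The sum consists of $k$ terms, each at least $1$ by the distinctness condition, forcing $l\geq k$; and, assuming $G$ has diameter $d$ (which implicitly requires $G$ to be connected), each term is at most $d$, forcing $l\leq kd$. Hence $\MC_{k,l}(G)=0$ in either forbidden range, and the same vanishing passes to $\MH_{k,l}(G)$.

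For the strict bound $l/d<k$, it suffices to show $\MH_{k,kd}(G)=0$ whenever $d>1$ and $l=kd>0$ (so $k\geq 1$), and I would prove this by exhibiting every generator of $\MC_{k,kd}(G)$ as a boundary. Such a generator $(x_0,\dots,x_k)$ is forced, by the saturated bounds above, to satisfy $d(x_i,x_{i+1})=d$ for every $i$. Since $d\geq 2$, I would then pick a vertex $y$ lying strictly between $x_0$ and $x_1$ on a shortest path, so that $0<d(x_0,y)<d$, and form the extended tuple $(x_0,y,x_1,x_2,\dots,x_k)\in\MC_{k+1,kd}(G)$.

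The main computation is to evaluate $\partial$ on this extended tuple. The face removing $y$ survives by the choice of $y$, since $d(x_0,y)+d(y,x_1)=d=d(x_0,x_1)$, and it yields $(x_0,x_1,\dots,x_k)$. Every other face must vanish, because the two consecutive distances involved in such a face add up either to $(d-d(x_0,y))+d>d$ (for the face removing $x_1$) or to $d+d=2d>d$ (for the faces removing $x_i$ with $i\geq 2$), and in both cases this sum exceeds the diameter and cannot equal the corresponding skipping distance. This calculation, resting on the fact that the diameter is the binding constraint making only one face survive, is the main obstacle. Once it is carried out, $\MC_{k,kd}(G)$ lies in the image of the boundary, and the vanishing of $\MH_{k,kd}(G)$ delivers the strict inequality.
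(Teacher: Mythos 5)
Your proposal is correct and follows essentially the same route as the paper: the first two bounds are read off from the length constraint at the chain level, and the strict bound is obtained by inserting an intermediate vertex $y$ between $x_0$ and $x_1$ and checking that $\partial_1$ is the only surviving face of $(x_0,y,x_1,\ldots,x_k)$, so every generator of $\MC_{k,kd}(G)$ is a boundary. The face-by-face verification you flag as the main obstacle goes through exactly as you describe (each skipped pair of distances sums to more than the diameter), matching the paper's computation.
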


\begin{proof}
If $\MH_{k,l}(G)\neq 0$ then $\MC_{k,l}(G)\neq 0$,
so there is a tuple $(x_0,\ldots,x_k)$ satisfying
\[
	l=d(x_0,x_1)+\cdots d(x_{k-1},x_k).
\]
Each of the summands is at least $1$, since 
consecutive entries are distinct, 
and this gives the first inequality.  
If $G$ has diameter $d$ then each summand is at most $d$,
and this gives the second inequality. 
For the final part suppose that $d>1$,  $l>0$ and $k=l/d$.
Let $(x_0,\ldots,x_k)$ be a generator of $\MC_{k,l}(G)$,
so that $d(x_{i-1},x_i)=d$ for all $i$.
Since $d(x_0,x_1)\geqslant 2$ there is $y\neq x_0,x_1$ 
such that $d(x_0,y)+d(y,x_1)=d(x_0,x_1)$.  
Then $\partial_1(x_0,y,x_1,\ldots,x_k)=(x_0,x_1,\ldots,x_k)$
while $\partial_i(x_0,y,x_1,\ldots,x_k)=0$ for $i=2,\ldots,k-1$.
Thus $(x_0,\ldots,x_k) = \partial(-(x_0,y,x_1,\ldots,x_k))$.
It follows that $\MH_{k,l}(G)=0$.
\end{proof}

Let us conclude the section by verifying that
the operators $\partial$ satisfy the relation
$\partial\circ\partial=0$.  This is a routine
consequence of inequality~\eqref{inequality:one},
but because similar arguments will appear
several times in the rest of the paper, we give a
detailed proof here.

\begin{lemma}\label{differential:lemma}
For any graph $G$, any $k\geqslant 2$, and any $l\geqslant 0$,
the composite
\[
	\MC_{k,l}(G)
	\xrightarrow{\ \partial\ }
	\MC_{k-1,l}(G)
	\xrightarrow{\ \partial\ }
	\MC_{k-2,l}(G)
\]
vanishes.
\end{lemma}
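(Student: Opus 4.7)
The plan is to mimic the classical proof that the simplicial boundary squares to zero, while tracking the extra vanishing condition built into each face map $\partial_i$. First I would expand $\partial\circ\partial$ as the double sum $\sum_{i,j}(-1)^{i+j}\,\partial_i\partial_j$ over the relevant index pairs and split it into the halves $i<j$ and $i\geq j$. The goal is then to establish the simplicial identity $\partial_i\partial_j=\partial_{j-1}\partial_i$ for $i<j$, so that the two halves cancel after reindexing in the usual way.

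This identity comes in two parts: equality of the underlying tuples when both sides are nonzero, and equivalence of the nonvanishing conditions. The first part is a routine check about how indices shift after one deletion: both $\partial_i\partial_j$ and $\partial_{j-1}\partial_i$, when nonzero, produce $(x_0,\ldots,\widehat{x_i},\ldots,\widehat{x_j},\ldots,x_k)$. For the second part I would lean on inequality~\eqref{inequality:one}. The composite $\partial_i\partial_j$ is nonzero precisely when both $\ell(x_0,\ldots,\widehat{x_j},\ldots,x_k)=l$ and $\ell(x_0,\ldots,\widehat{x_i},\ldots,\widehat{x_j},\ldots,x_k)=l$; but the sandwich $l=\ell(\ldots,\widehat{x_i},\ldots,\widehat{x_j},\ldots)\leq \ell(\ldots,\widehat{x_j},\ldots)\leq \ell(x_0,\ldots,x_k)=l$ forces the middle term to equal $l$ automatically. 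The same argument works with the roles of $x_i$ and $x_j$ swapped, so the nonvanishing of $\partial_i\partial_j$, of $\partial_{j-1}\partial_i$, and of the joint deletion all collapse to the single criterion that removing both $x_i$ and $x_j$ preserves length.

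One small technical point to verify is that the tuples arising midway and at the end still satisfy the consecutive-distinct condition. This is automatic: if a deletion produced a repeated consecutive pair $x_{a-1}=x_{a+1}$, then $0=d(x_{a-1},x_{a+1})<d(x_{a-1},x_a)+d(x_a,x_{a+1})$, so the length would strictly decrease and the corresponding $\partial_a$ would already have been killed by its conditional clause. With the simplicial identity in hand, the standard sign-bookkeeping shows that the $(i,j)$ term with $i<j$ cancels against the $(j-1,i)$ term, yielding $\partial\circ\partial=0$. The main obstacle is precisely managing the conditional nature of the face maps, but as outlined, inequality~\eqref{inequality:one} makes the length-preservation conditions line up cleanly with the classical simplicial identity.
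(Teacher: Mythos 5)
Your proposal is correct and follows essentially the same route as the paper: establish the conditional simplicial identity $\partial_i\partial_j=\partial_{j-1}\partial_i$ for $i<j$ by using inequality~\eqref{inequality:one} to collapse the two length-preservation conditions on each side to the single condition that deleting both entries preserves the length, then cancel in the usual way. Your extra observation that the length condition automatically enforces the consecutive-distinctness of the resulting tuples is a valid point that the paper leaves implicit.
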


\begin{proof}
It is sufficient to show that for any generator $(x_0,\ldots,x_k)$
of $\MC_{k,l}(G)$, and any $i,j$ in the range  
$0\leqslant i<j\leqslant k$, we have 
\[
	\partial_i\circ\partial_j(x_0,\ldots,x_k)
	=
	\partial_{j-1}\circ\partial_i(x_0,\ldots,x_k).
\]
Each side of this equation is given by either
$(x_0,\ldots,\widehat{x_i},\ldots,\widehat{x_j},\ldots,x_k)$
or $0$.  The left hand side is nonzero if and only if
\[
	\ell(x_0,\ldots,\widehat{x_i},\ldots,x_k)=l
	\quad\mathrm{and}\quad
	\ell(x_0,\ldots,\widehat{x_i},\ldots,\widehat{x_j},\ldots,x_k)
	=l,	
\]
and inequality~\eqref{inequality:one} tells us that
\[
	\ell(x_0,\ldots,\widehat{x_i},\ldots,\widehat{x_j},\ldots,x_k)
	\leqslant
	\ell(x_0,\ldots,\widehat{x_i},\ldots,x_k)
	\leqslant
	\ell(x_0,\ldots,x_k)
	=l,
\]
so that the left hand side is nonzero if and only if
\[
	\ell(x_0,\ldots,\widehat{x_i},\ldots,\widehat{x_j},\ldots,x_k)=l.
\]
A similar argument shows that the right hand side
is nonzero if and only if the same condition holds.
This completes the proof.
\end{proof}

\section{Induced maps}
\label{SectionInducedMaps}
The magnitude of a graph is an element of a set,
the set of formal power series with integer
coefficients.  The magnitude homology of a
graph, on the other hand, is an object of a
category, the category of bigraded abelian
groups.  This categorification gives us the
opportunity to make magnitude into a functor,
and that is what we will do in this section.

In order to make graphs into the objects of
a category we choose the following
notion of morphism.
Given graphs $G$ and $H$, 
a \emph{map of graphs} $f\colon G\to H$
is a map of vertex sets $f\colon V(G)\to V(H)$
satisfying the condition
\[
	\{x,y\}\in E(G)
	\implies
	\{f(x),f(y)\}\in E(H) \mathrm{\ or\ }f(x)=f(y).
\]
In words, a map of graphs is a map of vertex
sets that preserves or contracts each edge.
And in terms of distance, a map of graphs is
a map of vertex sets for which
$d_H(f(x),f(y))\leqslant d_G(x,y)$ for all vertices $x,y\in V(G)$.  From the metric space perspective these distance non-increasing maps are the correct ones to consider in the context of magnitude.
Observe that if $f\colon G\to H$ is a map of
graphs, then the inequality
\begin{equation}\label{inequality:two}
	\ell(f(x_0),\ldots,f(x_k))
	\leqslant
	\ell(x_0,\ldots,x_k)
\end{equation}
holds for any tuple $(x_0,\ldots,x_k)$
of vertices of $G$.

\begin{definition}[Induced chain maps]
Let $f\colon G\to H$ be a map of graphs.
The \emph{induced chain map}
\[f_\#\colon \MC_{\ast,\ast}(G) \longrightarrow \MC_{\ast,\ast}(H)\]
is defined on generators by
\[
	f_\#(x_0,\ldots,x_k)
	=
	\begin{cases}		(f(x_0),\ldots,f(x_k))
		&
		\mathrm{if\ } \ell(f(x_0),\ldots,f(x_k))=\ell(x_0,\ldots,x_k)
		\\
		0
		&
		\mathrm{otherwise}.	
	\end{cases}
\]
\end{definition}

If $f\colon G\to H$ is a map of graphs then
the relation $f_\#\circ\partial = \partial\circ f_\#$ holds,
so that $f_\#$ is indeed a chain map. 
The proof, which we omit, is similar to that of
Lemma~\ref{differential:lemma},
and makes use of inequalities~\eqref{inequality:one}
and~\eqref{inequality:two}.

\begin{definition}[Induced maps in homology]
Let $f\colon G\to H$ be a map of graphs.
The \emph{induced map in homology} is the map
\[
	f_\ast\colon \MH_{\ast,\ast}(G)\longrightarrow \MH_{\ast,\ast}(H)
\]
induced by $f_\#$.
\end{definition}

\begin{proposition}
The assignment $G\mapsto \MH_{\ast,\ast}(G)$,
$f\mapsto f_\ast$ is a functor from the category of graphs
to the category of bigraded abelian groups.
\end{proposition}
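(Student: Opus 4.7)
The plan is to verify functoriality at the chain level, namely $(\mathrm{id}_G)_\# = \mathrm{id}_{\MC_{\ast,\ast}(G)}$ and $(g \circ f)_\# = g_\# \circ f_\#$ for composable maps of graphs $f\colon G\to H$ and $g\colon H\to K$. Once these are established, passing to homology is itself a functor from chain complexes to bigraded abelian groups, so the two functoriality axioms for $\MH_{\ast,\ast}$ follow immediately.

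The identity statement is trivial: for any generator $(x_0,\ldots,x_k)$ of $\MC_{k,l}(G)$ the equality $\ell(x_0,\ldots,x_k) = \ell(x_0,\ldots,x_k)$ obviously holds, so $(\mathrm{id}_G)_\#$ sends the generator to itself.

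For composition I would fix a generator $(x_0,\ldots,x_k)$ and exploit two applications of inequality~\eqref{inequality:two} to obtain
\[
\ell(g(f(x_0)),\ldots,g(f(x_k))) \leqslant \ell(f(x_0),\ldots,f(x_k)) \leqslant \ell(x_0,\ldots,x_k).
\]
If the outer quantities are all equal to $l$, then both intermediate inequalities are equalities, so the length-preservation clause is satisfied at every step and all three of $(g\circ f)_\#(x_0,\ldots,x_k)$, $g_\#(f(x_0),\ldots,f(x_k))$, and thus $g_\# \circ f_\#(x_0,\ldots,x_k)$ equal the tuple $(g(f(x_0)),\ldots,g(f(x_k)))$. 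If instead $\ell(g(f(x_0)),\ldots,g(f(x_k))) < l$, then $(g\circ f)_\#(x_0,\ldots,x_k) = 0$, and I would split into two subcases: either $\ell(f(x_0),\ldots,f(x_k)) < l$, in which case $f_\#$ already annihilates the generator, or else $\ell(f(x_0),\ldots,f(x_k)) = l$, in which case $f_\#(x_0,\ldots,x_k) = (f(x_0),\ldots,f(x_k))$ but the first inequality above is strict, so $g_\#$ then kills the image.

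The only subtlety worth flagging is that when $f_\#$ is length-preserving on a generator, the image tuple $(f(x_0),\ldots,f(x_k))$ automatically has distinct consecutive entries: a coincidence $f(x_i) = f(x_{i+1})$ would force $d_H(f(x_i),f(x_{i+1})) = 0 < d_G(x_i,x_{i+1})$ and thus make the length inequality strict. This ensures that whenever $f_\#(x_0,\ldots,x_k)$ is nonzero, it is a bona fide generator of $\MC_{k,l}(H)$ on which $g_\#$ is defined, which is exactly what is needed for the case analysis above to make sense.
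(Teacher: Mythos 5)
Your proof is correct and takes essentially the same route the paper sketches: the paper leaves the composition check to the reader, indicating only that one argues as in Lemma~\ref{differential:lemma} using inequality~\eqref{inequality:two}, and your case analysis is precisely that argument written out (with the worthwhile extra observation that length-preservation forces the image tuple to have distinct consecutive entries).
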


That
the identity map of a graph induces the
identity map in homology is immediate. 
To prove that $g_\ast\circ f_\ast
= (g\circ f)_\ast$ holds for any maps of
graphs $f\colon G\to H$ and $g\colon H\to K$
one proceeds as in
Lemma~\ref{differential:lemma},
making use of both~\eqref{inequality:two}
and~\eqref{inequality:one}.
The details are left to the reader.

Recall from Proposition~\ref{elementary:proposition}
that $\MH_{0,0}(G)$ is the free abelian group on the set of
vertices of $G$, and that $\MH_{1,1}(G)$ is the free
abelian group on the set of oriented edges of $G$.
The following result, whose proof is an immediate
consequence of the definitions, describes the effect
of induced maps in these degrees.

\begin{proposition}
Let $f\colon G\to H$ be a map of graphs.
Then $f_\ast\colon \MH_{0,0}(G)\to \MH_{0,0}(H)$ sends a vertex $x$ to $f(x)$.
And $f_\ast\colon \MH_{1,1}(G)\to \MH_{1,1}(H)$ sends an edge $\{x,y\}$
to $\{f(x),f(y)\}$ if that is an edge, and to $0$ otherwise.
\end{proposition}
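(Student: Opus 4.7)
The plan is to reduce everything to the chain level using the identifications established in Proposition~\ref{elementary:proposition}. Recall that in the proof of that proposition it was noted that the differentials into and out of $\MC_{0,0}(G)$ and $\MC_{1,1}(G)$ vanish for degree reasons, so the isomorphisms $\MH_{0,0}(G)\cong\MC_{0,0}(G)$ and $\MH_{1,1}(G)\cong\MC_{1,1}(G)$ identify a vertex $x$ with the generator $(x)$ and an oriented edge $(x,y)$ with the generator $(x,y)$. Under these identifications, computing $f_\ast$ on a generator reduces to computing $f_\#$ on the corresponding chain-level generator.

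For the degree $(0,0)$ statement, a generator of $\MC_{0,0}(G)$ is a singleton tuple $(x)$, which has length $0$. The condition $\ell(f(x))=\ell(x)=0$ is vacuously satisfied, so by definition $f_\#(x)=(f(x))$, which after applying the identification sends the vertex $x$ to the vertex $f(x)$.

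For the degree $(1,1)$ statement, a generator of $\MC_{1,1}(G)$ is a pair $(x,y)$ with $x\neq y$ and $d_G(x,y)=1$, i.e.\ an oriented edge. Because $f$ is a map of graphs, the only two possibilities are that $\{f(x),f(y)\}$ is an edge of $H$ (so $d_H(f(x),f(y))=1$) or that the edge is contracted (so $f(x)=f(y)$, and $d_H(f(x),f(y))=0$). In the first case $\ell(f(x),f(y))=1=\ell(x,y)$, so $f_\#(x,y)=(f(x),f(y))$; in the second case $\ell(f(x),f(y))=0\neq 1$, so $f_\#(x,y)=0$. Passing through the identifications yields exactly the claim.

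There is no real obstacle here — the entire argument is an unwinding of the definition of $f_\#$ combined with the observation that the only way a map of graphs can fail to preserve the distance $1$ is to collapse an edge to a point. The only thing to be careful about is to invoke Proposition~\ref{elementary:proposition} to justify working chain-wise, rather than attempting a direct argument in homology.
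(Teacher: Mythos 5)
Your proof is correct and is precisely the unwinding of the definitions that the paper alludes to when it says the proposition is "an immediate consequence of the definitions"; identifying homology with chains via Proposition~\ref{elementary:proposition} and then evaluating $f_\#$ on generators is the intended argument. Nothing to add.
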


\begin{corollary}
Let $f\colon G\to H$ be a map of graphs.
If $f_\ast\colon \MH_{\ast,\ast}(G)
\to \MH_{\ast,\ast}(H)$ is an isomorphism,
then $f$ is an isomorphism of graphs.
\end{corollary}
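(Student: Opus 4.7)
The plan is to exploit only the bottom two bidegrees of magnitude homology, namely $\MH_{0,0}$ and $\MH_{1,1}$, for which the preceding proposition gives a completely explicit description of $f_\ast$. Because $\MH_{0,0}(G)$ and $\MH_{0,0}(H)$ are the free abelian groups on $V(G)$ and $V(H)$ respectively, and $f_\ast$ is the $\Z$-linear extension of $f\colon V(G)\to V(H)$, the standard fact that a linear map between free abelian groups induced by a map of bases is an isomorphism if and only if that map of bases is a bijection shows that $f$ restricts to a bijection $V(G)\xrightarrow{\cong}V(H)$.

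Next I turn to $\MH_{1,1}$. Here $\MH_{1,1}(G)$ and $\MH_{1,1}(H)$ are free abelian on the sets of oriented edges $\vec{E}(G)$ and $\vec{E}(H)$, and by the preceding proposition $f_\ast$ sends an oriented edge $(x,y)$ either to the oriented edge $(f(x),f(y))$, when $\{f(x),f(y)\}\in E(H)$, or to $0$ otherwise. Since $f$ is already a bijection on vertices, distinct oriented edges of $G$ that are not sent to $0$ have distinct images. Injectivity of $f_\ast$ therefore forces $f_\ast$ to vanish on no generator, i.e.\ every edge of $G$ maps to an edge of $H$; surjectivity of $f_\ast$ forces every oriented edge of $H$ to be hit, i.e.\ for every $\{a,b\}\in E(H)$ the unique preimages $x=f^{-1}(a)$, $y=f^{-1}(b)$ satisfy $\{x,y\}\in E(G)$.

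Combining these two observations, $f$ is a bijection of vertex sets under which $\{x,y\}\in E(G)$ if and only if $\{f(x),f(y)\}\in E(H)$, which is exactly the condition for $f$ to be an isomorphism of graphs (its vertex inverse is automatically a map of graphs). There is really no serious obstacle: the whole argument is extracting information from the two explicit formulas of the previous proposition, and the only point that requires any care is noting that injectivity of $f_\ast$ on $\MH_{1,1}$ rules out edge contractions precisely because we have already established bijectivity on vertices.
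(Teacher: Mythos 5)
Your proof is correct and follows exactly the route the paper intends: the corollary is stated without proof as an immediate consequence of the preceding proposition describing $f_\ast$ on $\MH_{0,0}$ and $\MH_{1,1}$, and your argument (bijectivity on vertices from $\MH_{0,0}$, then edge preservation and reflection from injectivity and surjectivity on $\MH_{1,1}$) is the intended one. The one point needing care --- that injectivity on $\MH_{1,1}$ rules out contracted edges only once vertex bijectivity is established --- is handled properly.
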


\section{Disjoint unions}
\label{SectionDisjointUnions}
In this brief section we proof the additivity of
magnitude homology with respect to disjoint
unions.   As an immediate corollary we get the additivity of the magnitude.

\begin{proposition}\label{disjoint}
Let $G$ and $H$ be graphs and write $i\colon G\to G\sqcup H$
and $j\colon H\to G\sqcup H$ for the inclusion maps.
Then the induced map
\[
	i_\ast\oplus j_\ast\colon \MH_{\ast,\ast}(G)\oplus \MH_{\ast,\ast}(H)
	\longrightarrow \MH_{\ast,\ast}(G\sqcup H)
\]
is an isomorphism.
\end{proposition}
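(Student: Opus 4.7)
The plan is to prove the isomorphism already at the chain level, and then pass to homology. The essential observation is that in a disjoint union $G\sqcup H$, any vertex of $G$ and any vertex of $H$ lie in different components, so the distance between them is $\infty$. Consequently, if $(x_0,\ldots,x_k)$ is a tuple of vertices of $G\sqcup H$ with $x_0\neq x_1\neq\cdots\neq x_k$ and finite length $\ell(x_0,\ldots,x_k)=l$, then every $d(x_{i-1},x_i)$ is finite, which forces consecutive vertices to lie in the same component, and hence all of $x_0,\ldots,x_k$ lie in either $V(G)$ or $V(H)$.

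First I would use this observation to set up a direct sum decomposition on generators: each generator of $\MC_{k,l}(G\sqcup H)$ is in the image of exactly one of $i_\#$ or $j_\#$, and both $i_\#$ and $j_\#$ are injective on generators. Moreover, since distances within $G$ (resp. $H$) agree with distances in $G\sqcup H$, the length of a tuple is preserved by $i$ and $j$, so the chain maps $i_\#$ and $j_\#$ send generators to generators (rather than to zero). This yields a chain-level isomorphism
\[
i_\# \oplus j_\# \colon \MC_{\ast,\ast}(G)\oplus\MC_{\ast,\ast}(H)
\xrightarrow{\ \cong\ } \MC_{\ast,\ast}(G\sqcup H).
\]

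Next I would check that this decomposition is respected by the differential: if $(x_0,\ldots,x_k)$ has all entries in $V(G)$, then each face $(x_0,\ldots,\widehat{x_i},\ldots,x_k)$ again has all entries in $V(G)$, so the differential preserves the two summands. Since $i_\#$ and $j_\#$ are already known to be chain maps, this is automatic, but it is worth noting explicitly to justify passing to homology.

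Finally, taking homology of an isomorphism of chain complexes gives an isomorphism on homology groups, and homology commutes with direct sums, so
\[
i_\ast \oplus j_\ast \colon \MH_{\ast,\ast}(G)\oplus\MH_{\ast,\ast}(H)
\xrightarrow{\ \cong\ } \MH_{\ast,\ast}(G\sqcup H),
\]
as required. There is no real obstacle here; the only subtlety is checking that distances are preserved under the inclusions so that $i_\#$ and $j_\#$ do not unexpectedly send generators to zero, and this follows immediately from the fact that a shortest edge path within a component of $G\sqcup H$ stays in that component.
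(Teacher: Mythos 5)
Your proof is correct and follows exactly the paper's approach: the key observation in both cases is that a generator of $\MC_{k,l}(G\sqcup H)$ has finite length, forcing all its entries into $G$ or all into $H$, so that $i_\#\oplus j_\#$ is already an isomorphism of chain complexes. The additional checks you spell out (lengths preserved by the inclusions, differentials respecting the decomposition) are left implicit in the paper but are the right things to verify.
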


\begin{proof}
Let $(x_0,\ldots,x_k)$ be a generator
of $\MC_{k,l}(G\sqcup H)$. 
Since $\ell(x_0,\ldots,x_k)=l$, 
we have $d(x_{i-1},x_i)<\infty$ for all $i$, 
so that $x_0,\ldots,x_k$ all belong to $G$ or all
belong to $H$. Consequently $i_\#\oplus j_\#$
is an isomorphism, and the result follows.
\end{proof}

\begin{corollary}[{Leinster~\cite[Lemma~3.5]{LeinsterGraph}}]
Let $G$ and $H$ be graphs.
Then $\#(G\sqcup H) = \#G + \#H$.
\end{corollary}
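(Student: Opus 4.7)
The plan is to derive the corollary as an immediate consequence of Proposition~\ref{disjoint} by passing to the graded Euler characteristic, exactly as advertised in the introductory paragraph on disjoint unions.

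First I would invoke Proposition~\ref{disjoint} to get the isomorphism of bigraded abelian groups $\MH_{\ast,\ast}(G\sqcup H)\cong \MH_{\ast,\ast}(G)\oplus \MH_{\ast,\ast}(H)$. Since rank is additive over direct sums of abelian groups, this yields
\[
\rank\bigl(\MH_{k,l}(G\sqcup H)\bigr) = \rank\bigl(\MH_{k,l}(G)\bigr) + \rank\bigl(\MH_{k,l}(H)\bigr)
\]
for every $k,l\geq 0$.

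Next I would apply Theorem~\ref{euler:theorem} to each of $G$, $H$, and $G\sqcup H$, multiply the displayed rank identity by $(-1)^k q^l$, and sum over $k,l\geq 0$. The sum for $G\sqcup H$ splits as the sum of the corresponding sums for $G$ and $H$, giving
\[
\#(G\sqcup H) = \#G + \#H.
\]

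There is no real obstacle here: the only things being used are the additivity of rank under direct sums, the absolute convergence (as formal power series in $q$) of the relevant double sums, and Theorem~\ref{euler:theorem}. The argument is in fact a template that will be reused verbatim for the other categorified identities (K\"unneth and Mayer--Vietoris), once the corresponding homological statements have been established.
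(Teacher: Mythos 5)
Your proof is correct and matches the paper's own argument: the paper likewise deduces the corollary from Proposition~\ref{disjoint}, Theorem~\ref{euler:theorem}, and the additivity of the Euler characteristic under direct sums, which is exactly your rank computation. No issues.
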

\begin{proof}
This follows from Proposition~\ref{disjoint},
Theorem~\ref{euler:theorem}, and the fact that 
$\chi(C_\ast\oplus D_\ast) = 
\chi(C_\ast) + \chi(D_\ast)$
for finitely generated graded abelian groups
$C_\ast$ and $D_\ast$.
\end{proof}

\section{Cartesian products}
\label{SectionCartesianProducts}
In this section we state a K\"unneth Theorem for magnitude homology with respect to the cartesian product of graphs and we give an example.  The proof of the theorem is given is Section~\ref{KunnethProof:Section}.

The \emph{cartesian product} $G\cp H$
of graphs $G$ and $H$ has vertex 
set $V(G)\times V(H)$, and has 
an edge from $(x_1,y_1)$ to $(x_2,y_2)$
if either $x_1=x_2$ and $\{y_1,y_2\}$ is an edge in $H$,
or $y_1=y_2$ and $\{x_1,x_2\}$ is an edge in $G$.
The metric on $G\cp H$ is given by
\begin{equation}\label{cpmetric}
	d_{G\cp H}((x_1,y_1),(x_2,y_2)) = d_G(x_1,x_2) + d_H(y_1,y_2)
\end{equation}
for $(x_1,y_1),(x_2,y_2)\in V(G\cp H)$.

\begin{remark}The cartesian product is not the \emph{categorical} product on the category of graphs, but Equation~\eqref{cpmetric} tells us that it is the natural tensor product from the perspective of enriched category theory.
\end{remark}
\begin{definition}[The exterior product]\label{exterior}
Fix $l\geqslant 0$.
The \emph{exterior product} is the map
\begin{equation}\label{exterior-chains}
	\cp\colon \MC_{\ast,\ast}(G)\otimes \MC_{\ast,\ast}(H)
	\longrightarrow
	\MC_{\ast,\ast}(G\cp H)
\end{equation}
whose component
\[
	\cp\colon \MC_{k_1,l_1}(G)\otimes \MC_{k_2,l_2}(H)
	\longrightarrow
	\MC_{k,l}(G\cp H)
\]
for $k_1,k_2\geqslant 0$ with $k_1+k_2=k$
is defined by
\[
	(x_0,\ldots,x_{k_1})\cp(y_0,\ldots,y_{k_2})
	=
	\sum_\sigma \mathrm{sign}(\sigma)\cdot
	((x_{i_0},y_{j_0}),\ldots,(x_{i_{k}},y_{j_k})).
\]
Here the sum ranges over all sequences
$\sigma=((i_0,j_0),\ldots,(i_k,j_k))$ 
for which
$i_0=j_0=0$, for which $0\leqslant i_r\leqslant k_1$
and $0\leqslant j_r\leqslant k_2$ for all $r$,
and for which each term $(i_{r+1},j_{r+1})$ is
obtained from its predecessor $(i_r,j_r)$ by increasing exactly one
of the two components by $1$.
Given such a sequence, we define $\mathrm{sign}(\sigma)=(-1)^n$
where $n$ is the number of pairs $(i,j)$ for which $i=i_r \implies j<j_r$.
Compare with the discussion in~\cite[pp.~277-278]{Hatcher}.
The exterior product is a chain map, and so induces a map
in homology that we indicate by the same symbol,
\begin{equation}\label{exterior-homology}
	\cp\colon \MH_{\ast,\ast}(G)\otimes \MH_{\ast,\ast}(H)
	\longrightarrow
	\MH_{\ast,\ast}(G\cp H).
\end{equation}
\end{definition}

\begin{theorem}[The K\"unneth theorem for magnitude homology]
\label{kunneth:theorem}
The exterior product in homology~\eqref{exterior-homology}
fits into a natural short exact sequence
	\begin{multline*}
		0
		\longrightarrow
		\MH_{\ast,\ast}(G)\otimes \MH_{\ast,\ast}(H)
		\xrightarrow{\ \cp\ }
		\MH_{\ast,\ast}(G\cp H)
		\\
		\longrightarrow
		\mathrm{Tor}(\MH_{\ast-1,\ast}(G), \MH_{\ast,\ast}(H)
		\longrightarrow
		0
	\end{multline*}
that is non-naturally split.  In particular, $\cp$ becomes an
isomorphism after tensoring with the rationals, and is an
isomorphism if either $\MH_{\ast,\ast}(G)$ or $\MH_{\ast,\ast}(H)$
is torsion-free.
\end{theorem}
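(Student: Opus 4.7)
The plan is to deduce the short exact sequence from the classical algebraic K\"unneth formula, with the main technical point being to show that the exterior product chain map is a quasi-isomorphism.

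First I would apply the classical algebraic K\"unneth theorem to the bigraded tensor product $\MC_{\ast,\ast}(G)\otimes\MC_{\ast,\ast}(H)$. The chain groups $\MC_{k,l}(G)$ are free abelian by construction, and upon fixing the total length $l$, each complex $\bigl(\MC(G)\otimes\MC(H)\bigr)_{\ast,l}$ is a finite direct sum of tensor products of finitely generated free chain complexes. The classical theorem (e.g.\ Theorem 3B.5 of Hatcher) therefore yields a natural, non-naturally split short exact sequence
\begin{equation*}
0 \to \MH_{\ast,\ast}(G)\otimes\MH_{\ast,\ast}(H) \to \HH_{\ast}\bigl(\MC(G)\otimes\MC(H)\bigr) \to \mathrm{Tor}\bigl(\MH_{\ast-1,\ast}(G),\MH_{\ast,\ast}(H)\bigr) \to 0.
\end{equation*}
Thus if I can show that the exterior product induces an isomorphism $\HH_{\ast}\bigl(\MC(G)\otimes\MC(H)\bigr) \cong \MH_{\ast,\ast}(G\cp H)$, then substituting into the middle term yields the claimed sequence, with all the naturality and splitting properties inherited from the algebraic K\"unneth.

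The main step is then to prove that the exterior product chain map is a quasi-isomorphism in each fixed total length $l$. The shuffle-sum formula in Definition~\ref{exterior} is a direct analogue of the Eilenberg--Zilber shuffle map, and I would proceed via the pointed simplicial model alluded to in Remark~\ref{simplicialapproach}. Specifically, I would realise $\MC_{\ast,l}(G)$ as the normalised, reduced chain complex of a pointed simplicial set $M_l(G)$, and use the metric additivity~\eqref{cpmetric} to identify $M_l(G\cp H)$ with a union of smash products of the form $M_{l_1}(G)\wedge M_{l_2}(H)$ with $l_1+l_2=l$. Under this identification the exterior product becomes the classical Eilenberg--Zilber map, which is a chain homotopy equivalence; the resulting quasi-isomorphism plugs into the paragraph above to complete the proof.

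The principal obstacle is setting up the simplicial dictionary so that the Eilenberg--Zilber framework applies cleanly. A generator of $\MC_{k,l}(G\cp H)$ is a tuple of vertices whose consecutive-distance sum equals $l$; projecting to $G$ and $H$ produces paths whose combined length is $l$, but each projection can have consecutive repeated entries precisely where the corresponding step happens in the other factor. These repetitions are exactly the degeneracies that normalisation discards, so the reduced chains on either side assemble correctly into a shuffle product. The work is in checking that basepoints, normalisations and signs all line up so that the formula of Definition~\ref{exterior} matches the Eilenberg--Zilber map on the nose; once that dictionary is in place, everything else is classical homological algebra.
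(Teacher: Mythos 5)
Your proposal follows essentially the same route as the paper's own proof: realise $\MC_{\ast,l}(G)$ as the reduced normalised chains of the pointed simplicial set $M_l(G)$, identify $M_l(G\cp H)$ with $\bigvee_{l_1+l_2=l}M_{l_1}(G)\wedge M_{l_2}(H)$ using the additivity of the metric, show the exterior product is the (reduced normalised) Eilenberg--Zilber quasi-isomorphism, and then invoke the algebraic K\"unneth theorem. The only technical point you gloss over, and which the paper addresses explicitly, is that the Eilenberg--Zilber map for \emph{reduced} normalised chains of pointed simplicial sets is a quasi-isomorphism --- this requires a short argument comparing kernels with the unreduced case rather than being literally the classical statement.
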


The proof of this theorem is deferred 
to Section~\ref{KunnethProof:Section}.

\begin{example}[The cyclic graph $C_4$]
The magnitude homology of the cyclic graph $C_4$ with four 
vertices is 
\[
	\MH_{k,l}(C_4)
	=
	\left\{
	\begin{array}{ll}
		\Z^{4(l+1)} & \text{if }k=l, \\
		0 & \text{otherwise.} 
	\end{array}
	\right.
\]
To see this, observe that $C_4=K_2\cp K_2$.
Example~\ref{complete:example} shows that
$MH_{k,l}(K_2)$ vanishes if $k\neq l$ and that it is free abelian
on two generators if $k=l$.
Since these groups contain no torsion the
K\"unneth Theorem (Theorem~\ref{kunneth:theorem}) shows that
\[
	\MH_{\ast,\ast}(C_4)
	\cong
	\MH_{\ast,\ast}(K_2)\otimes\MH_{\ast,\ast}(K_2),
\]
or more explicitly that
\[
	\MH_{k,l}(C_4)
	\cong
	\bigoplus_{
		\begin{array}{c}
			\scriptstyle k_1+k_2=k \\ 
			\scriptstyle l_1+l_2=l
		\end{array}
	}
	\MH_{k_1,l_1}(K_2)\otimes \MH_{k_2,l_2}(K_2),
\] 
and the claim follows.
\end{example}

\begin{remark}
We know of no graph $G$ for which $\MH_{\ast,\ast}(G)$ contains
torsion.
\end{remark}

\section{The Mayer-Vietoris sequence}
\label{SectionMayerVietoris}
In this section we show that a Mayer-Vietoris theorem holds for so-called projecting decompositions of graphs.  
The long exact Mayer-Vietoris sequence actually breaks up into split short exact sequences.  From this we will obtain Leinster's inclusion-exclusion principle.
The proof is given in Section~\ref{SectionExcision}.

We begin by recalling some definitions of Leinster.  
Firstly, convexity is supposed to be reminiscent of the idea that in convex subset of $\mathbb{R}^{n}$ each pair of points is connected by a geodesic which is also contained in the subset.
\begin{definition}[Convex {\cite[Definition~4.2]{LeinsterGraph}}] 
A subgraph $U\subset X$ is called \emph{convex}
if $d_U(u,v)=d_X(u,v)$ for all $u,v\in U$.
\end{definition}
Secondly, projecting to a convex subgraph is reminiscent of the idea that there is a `nearest point map'  from $\mathbb{R}^{n}$ to any convex subset.
\begin{definition}[Projecting~{\cite[Definition~4.6]{LeinsterGraph}}] 
Let $U\subset X$ be a convex subgraph.  We say that 
$X$ \emph{projects} to $U$ if for every $x\in X$
that can be connected by an edge-path to some vertex of $U$,
there is $\pi(x)\in U$ such that for all $u\in U$ we have
\[
	d(x,u) = d(x,\pi(x)) + d(\pi(x),u).
\]
Thus $\pi(x)$ is the unique point of $U$ closest to $x$.
Writing $X_U$ for the component of $X$ consisting
of vertices that admit an edge path to $U$, there is then a map
$\pi\colon X_U\to U$ defined by $u\mapsto \pi(u)$.
\end{definition}
Every even cyclic graph projects to any of its edges, whereas no odd graph projects to any of its edges.  Projecting to $U$ is stronger than each point having a closest point in $U$ as can be seen by considering two adjacent edges as a subgraph of the $5$-cycle graph.

Suppose that $X$ is a graph that is the union of subgraphs $G$ and $H$,
such that $G\cap H$ is convex in $G\cup H$,
and such that $H$ projects to $G\cap H$.
Leinster~\cite[Theorem~4.9]{LeinsterGraph} has shown
that in this situation the magnitude satisfies
the inclusion-exclusion principle
$\# X = \#G+\#H-\#(G\cap H)$.  We will categorify this to a
Mayer-Vietoris sequence relating the magnitude
homologies of $G\cap H$, $G$, $H$ and $G\cup H$.

\begin{definition}[Projecting decompositions]
A \emph{projecting decomposition} is a triple $(X;G,H)$ consisting
of a graph $X$ and subgraphs $G$ and $H$ such that
the following properties hold.
\begin{itemize}
	\item
	$X=G\cup H$
	\item
	$G\cap H$ is convex in $X$
	\item
	$H$ projects to $G\cap H$
\end{itemize}
Given a projecting decomposition $(X;G,H)$, we write
\[
	i^G\colon G\to X,
	\quad
	i^H\colon H\to X,
	\quad
	j^G\colon G\cap H\to G,
	\quad
	j^H\colon G\cap H\to H
\]
for the inclusion maps.
A \emph{decomposition map} $f\colon (X;G,H)\to (X';G',H')$ is
a map of graphs $f\colon X\to X'$ such that $f(G)\subset G'$
and $f(H)\subset H'$.  It is a \emph{projecting} decomposition map if
$H_{G\cap H}=f^{-1}(H'_{G'\cap H'})$ 
and $f(\pi(h))=\pi(f(h))$ for all $h\in H_{G\cap H}$.
\end{definition}

\begin{definition}
Given a projecting decomposition $(X;G,H)$,
 let $\MC_{\ast,\ast}(G,H)$ 
denote the subcomplex of $\MC_{\ast,\ast}(G\cup H)$ 
spanned by those tuples $(x_0,\ldots,x_k)$
whose entries all lie in $G$ or all lie in $H$.
\end{definition}

\begin{theorem}[Excision for magnitude chains]
\label{excision:theorem}
Let $(X;G,H)$ be a projecting decomposition.
For all $l\geqslant 0$ the inclusion
$\MC_{\ast,l}(G,H)\hookrightarrow \MC_{\ast,l}(G\cup H)$
is a quasi-isomorphism.
\end{theorem}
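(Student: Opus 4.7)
The plan is to prove the theorem by showing that the quotient complex $Q_{\ast,l} := \MC_{\ast,l}(G\cup H)/\MC_{\ast,l}(G,H)$ is acyclic; the conclusion then follows at once from the long exact sequence of the pair. The group $Q_{\ast,l}$ is free abelian on the \emph{mixed} tuples $(x_0,\ldots,x_k)$ of length $l$ --- those whose vertex list meets both $V(G)\setminus V(H)$ and $V(H)\setminus V(G)$.

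The geometric heart of the proof is the following factorization, valid whenever $h \in V(H)\setminus V(G)$ lies in the component $H_{G\cap H}$ and $g \in V(G)$ has $d_X(h,g) < \infty$:
\[
	d_X(h,g) \;=\; d_X(h,\pi(h)) + d_X(\pi(h),g).
\]
This holds because edges of $X = G\cup H$ lie entirely in $G$ or entirely in $H$, so any $X$-geodesic from $h$ to $g$ must enter $G\cap H$ to transition between the two sides; the projection property of $H$ over $G\cap H$ then lets us re-route through $\pi(h)$ without increasing length. Consequently, inserting $\pi(x_i)$ between an entry $x_i \in V(H)\setminus V(G)$ of a tuple and an adjacent entry $x_{i+1}\in V(G)$ preserves the total length, and so produces a valid generator of $\MC_{\ast,l}$.

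Using this, I would construct an explicit chain contraction $s\colon Q_{k,l}\to Q_{k+1,l}$: given a mixed tuple, locate the largest index $i$ with $x_i \in V(H)\setminus V(G)$ and $x_{i+1}\in V(G)$, and define
\[
	s(x_0,\ldots,x_k) \;=\; (-1)^{i+1}\,(x_0,\ldots,x_i,\pi(x_i),x_{i+1},\ldots,x_k);
\]
a symmetric recipe is needed when the tuple ends in $V(H)\setminus V(G)$. The identity $\partial s + s\partial = \mathrm{id}$ is then checked face-by-face: most faces pair off, the two extremal faces either reconstitute the original tuple or have both new entries removed, and any face terms that would spoil the identity vanish because they strictly shrink the length below $l$ and are therefore killed by the length clause in the definition of $\partial_j$. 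The main obstacle I expect is the combinatorial bookkeeping around \emph{degenerate cases}: the inserted $\pi(x_i)$ may coincide with $x_{i+1}$ (when $x_{i+1}\in G\cap H$ is itself the nearest point of $G\cap H$ to $x_i$), in which case the naive formula is not a valid generator; and weaving together the two partial recipes for $s$ with consistent signs is delicate. Resolving these degeneracies --- or alternatively replacing the explicit homotopy by a filtration argument counting the $G/H$-transitions in a tuple --- is where the bulk of the technical work lies.
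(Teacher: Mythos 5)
Your reduction of the theorem to acyclicity of the quotient $Q_{\ast,l}=\MC_{\ast,l}(G\cup H)/\MC_{\ast,l}(G,H)$ is the right first move, and your geometric input --- the identity $d(h,g)=d(h,\pi(h))+d(\pi(h),g)$, which guarantees that inserting $\pi(x_i)$ preserves the length $l$ --- is exactly the ingredient the paper uses. The gap is in the single global chain contraction. The insertion position, ``the largest index $i$ with $x_i\in V(H)\setminus V(G)$ and $x_{i+1}\in V(G)$,'' is not stable under the face maps, and this produces cross-terms in $\partial s+s\partial$ that do not cancel. Concretely, take a mixed tuple $(g,h_1,h_2,g')$ with $g,g'\in V(G)\setminus V(H)$, $h_1,h_2\in V(H)\setminus V(G)$, and suppose $\partial_2$ preserves the length. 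Then $s\partial_2$ contributes $\pm(g,h_1,\pi(h_1),g')$, whereas the face of $s(g,h_1,h_2,g')=\pm(g,h_1,h_2,\pi(h_2),g')$ deleting $h_2$ contributes $\pm(g,h_1,\pi(h_2),g')$; these are distinct generators in general, and nothing else in $\partial s+s\partial$ cancels them. The degenerate case $\pi(x_i)=x_{i+1}$ that you flag is comparatively harmless (one simply sends such generators to $0$, as the paper does in Lemma~\ref{A-acyclic:lemma}); the instability of the insertion point is the real obstruction, and it is not a matter of sign bookkeeping.

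The fix is the filtration you mention only in passing, and it is where the bulk of the proof lives. The paper filters $\MC_{\ast,l}(G\cup H)$ by how long a terminal segment of each tuple is forced to lie entirely in $G$ or entirely in $H$, so that in each filtration quotient the offending face maps become identically zero; each quotient is then identified with a direct sum of suspensions of complexes $B_{\ast,\ast}(b)/\bar B_{\ast,\ast}(b)$, which are filtered again down to the complexes $A_{\ast,\ast}(a,b)$ of tuples with fixed endpoints $a,b$ on opposite sides and all intermediate entries in $G\cap H$. Only on these innermost complexes, where the insertion point (just before the fixed final entry) cannot move, does the explicit homotopy $s(x_0,\ldots,x_k)=\pm(x_0,\ldots,x_{k-1},\pi(x_k),x_k)$ satisfy $\partial s+s\partial=\mathrm{id}$ (Lemmas~\ref{A-acyclic:lemma} and~\ref{B-acyclic:lemma}). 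So your outline needs to be restructured around this two-stage filtration before the $\pi$-insertion homotopy can be made to work; as written, the proposed contraction fails already in homological degree $3$.
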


This result is a version of excision for
the magnitude chain complex, and is closely
analogous to versions of excision that hold
for the singular chain complex of a topological
space, see for example~\cite[Proposition~2.21]{Hatcher}
or~\cite[Proposition~7.3]{Dold}. The proof of Theorem~\ref{excision:theorem}
is deferred until Section~\ref{SectionExcision}.  From excision we get the Mayer-Vietoris Theorem.

\begin{theorem}[Mayer-Vietoris for magnitude homology]
\label{MV:Theorem}
Let $(X;G,H)$ be a projecting decomposition.  Then there is a split short exact sequence:
\begin{multline*}
	0
	\to
	\MH_{\ast,\ast}(G\cap H)
	\xrightarrow{(j^G_\ast,-j^H_\ast)}
	\MH_{\ast,\ast}(G)\oplus \MH_{\ast,\ast}(H)\\
	\xrightarrow{i^G_\ast\oplus i^H_\ast}
	\MH_{\ast,\ast}(G\cup H)
	\to
	0.
\end{multline*}
The sequence is natural with respect to decomposition maps,
and the splitting is natural with respect to projecting decomposition maps.
\end{theorem}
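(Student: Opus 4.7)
The plan is to establish the theorem by producing a short exact sequence of chain complexes that splits, and then invoking excision. First I would construct the sequence
\[
0 \to \MC_{\ast,\ast}(G\cap H) \xrightarrow{(j^G_\#,\,-j^H_\#)} \MC_{\ast,\ast}(G) \oplus \MC_{\ast,\ast}(H) \xrightarrow{i^G_\# + i^H_\#} \MC_{\ast,\ast}(G,H) \to 0,
\]
where the rightmost map is interpreted via the tautological inclusion $\MC_{\ast,\ast}(G,H) \subset \MC_{\ast,\ast}(G\cup H)$. Exactness can be checked directly on generators: injectivity uses that $j^G_\#$ and $j^H_\#$ act as inclusions of generators, which holds because convexity of $G\cap H$ in $X$ (and hence in $G$ and in $H$) makes the relevant lengths agree; exactness in the middle uses that if $i^G_\#(a) + i^H_\#(b) = 0$ then each supporting tuple must lie in $G\cap H$; and surjectivity is immediate from the definition of $\MC_{\ast,\ast}(G,H)$. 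Theorem~\ref{excision:theorem} then identifies $H_\ast(\MC_{\ast,\ast}(G,H))$ with $\MH_{\ast,\ast}(G\cup H)$, so the associated long exact sequence in homology has the shape promised by Mayer-Vietoris.

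The heart of the argument is to promote this to a split sequence by producing a chain-level retraction $r\colon \MC_{\ast,\ast}(G) \oplus \MC_{\ast,\ast}(H) \to \MC_{\ast,\ast}(G\cap H)$ of $(j^G_\#, -j^H_\#)$. I would define $r(a,b) = -\pi_\#(b)$, where $\pi_\#$ is induced by the projection $\pi\colon H_{G\cap H} \to G\cap H$ and extended by zero on the chain complexes of the components of $H$ that do not meet $G\cap H$; this extension is well defined because consecutive entries of any tuple lie at finite distance and so share a component of $H$, whence $\MC_{\ast,\ast}(H)$ decomposes as a direct sum over components. The main obstacle is verifying that $\pi$ is actually a map of graphs. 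Applying the projection identity $d_H(x, u) = d_H(x, \pi(x)) + d_H(\pi(x), u)$ with $u = \pi(y)$, and symmetrically with $x$ and $y$ swapped, and combining with the triangle inequality, yields $d_H(\pi(x), \pi(y)) \leq d_H(x, y)$. For an edge $\{x,y\}$ of $H$ this gives $d_X(\pi(x), \pi(y)) \leq d_H(\pi(x), \pi(y)) \leq 1$, and convexity of $G\cap H$ in $X$ upgrades this to $d_{G\cap H}(\pi(x), \pi(y)) \leq 1$, so $\pi$ either contracts the edge or sends it to an edge of $G\cap H$. Hence $\pi_\#$ is a chain map, so is $r$, and because $\pi$ restricts to the identity on $G\cap H$ the retraction identity $r \circ (j^G_\#, -j^H_\#) = \mathrm{id}$ is immediate.

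Finally, the existence of $r$ splits the chain-level short exact sequence, so $\MC_{\ast,\ast}(G) \oplus \MC_{\ast,\ast}(H) \cong \MC_{\ast,\ast}(G\cap H) \oplus \MC_{\ast,\ast}(G,H)$ as chain complexes. Taking homology and invoking excision produces the desired split short exact sequence. Naturality with respect to decomposition maps is a direct consequence of the functoriality of $\MC_{\ast,\ast}$ applied to the inclusions. Naturality of the splitting with respect to projecting decomposition maps follows from the defining conditions $H_{G\cap H} = f^{-1}(H'_{G'\cap H'})$ and $f \circ \pi = \pi \circ f$, which together give $f_\# \circ \pi_\# = \pi_\# \circ f_\#$ and hence the naturality of $r$.
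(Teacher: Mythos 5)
Your proof is correct and follows essentially the same route as the paper: the same chain-level short exact sequence ending in $\MC_{\ast,\ast}(G,H)$, excision to identify its homology with $\MH_{\ast,\ast}(G\cup H)$, and a splitting given by $-\pi$ applied to the summand of $H$ that meets $G\cap H$. The only (harmless) difference is that you realise the retraction at the chain level and split there, whereas the paper passes to the long exact sequence and exhibits the left inverse in homology; your explicit check that $\pi$ is distance non-increasing, which the paper leaves implicit, is a welcome addition.
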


Theorem~\ref{MV:Theorem} is stronger than one might anticipate,
since it gives a short exact sequence in each homological degree,
rather than the single long exact sequence that is familiar from
the Mayer-Vietoris theorem for singular homology~\cite[p.~149]{Hatcher}.
In fact, our short exact sequence is obtained from
a long exact sequence of Mayer-Vietoris type by splitting
it into the short exact sequences.
The splitting is possible due to our
assumption that in a projecting decomposition $(X;G,H)$ the subgraph $H$
projects onto $G\cap H$.  This assumption is impossible
to remove, as shown in Section~\ref{section:projectingnecessary}.
The proof of Theorem~\ref{MV:Theorem} is also deferred until Section~\ref{SectionExcision}.

\begin{corollary}[Inclusion-exclusion {\cite[Theorem~4.9]{LeinsterGraph}}]
If $(X;G,H)$ is a projecting decomposition then
$\# X = \#G+\#H-\#(G\cap H)$.
\end{corollary}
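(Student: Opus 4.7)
The plan is to deduce this corollary directly from the Mayer-Vietoris theorem (Theorem~\ref{MV:Theorem}) and the categorification theorem (Theorem~\ref{euler:theorem}); there is no genuine obstacle here, since all the real content is packed into the Mayer-Vietoris result.

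First I would fix a bidegree $(k,l)$ and apply Theorem~\ref{MV:Theorem} to extract the short exact sequence of abelian groups
\[
0 \to \MH_{k,l}(G\cap H) \to \MH_{k,l}(G)\oplus \MH_{k,l}(H) \to \MH_{k,l}(G\cup H) \to 0.
\]
Since rank is additive on short exact sequences of finitely generated abelian groups, this yields the identity
\[
\rank\MH_{k,l}(G\cup H) = \rank\MH_{k,l}(G) + \rank\MH_{k,l}(H) - \rank\MH_{k,l}(G\cap H).
\]

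Next I would multiply by $(-1)^{k}q^{l}$ and sum over all $k,l\geq 0$. The four resulting double sums are, by Theorem~\ref{euler:theorem}, exactly the magnitudes $\#X$, $\#G$, $\#H$ and $\#(G\cap H)$ respectively (recall that $X=G\cup H$ by hypothesis). Combining these gives $\#X = \#G + \#H - \#(G\cap H)$ as required. The splitting guaranteed by Theorem~\ref{MV:Theorem} is not actually needed for this computation, since additivity of rank on short exact sequences is enough; the splitting is strictly stronger information than what inclusion-exclusion records at the level of Euler characteristics.
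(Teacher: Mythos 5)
Your proposal is correct and follows essentially the same route as the paper: the paper likewise takes the Mayer--Vietoris short exact sequence, uses vanishing of the Euler characteristic of an exact sequence (equivalently, your termwise additivity of rank), and then multiplies by $q^l$, sums, and invokes Theorem~\ref{euler:theorem}. Your observation that exactness alone, not the splitting, suffices is also implicit in the paper's argument.
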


\begin{proof}
From the short exact sequence in Theorem~\ref{MV:Theorem}
it follows that 
\[
	\chi(\MH_{\ast,l}(G\cap H))
	-\chi(\MH_{\ast,l}(G)\oplus \MH_{\ast,l}(H))
	+\chi(\MH_{\ast,l}(G\cup H))=0.
\]
Since Euler characteristic is additive with
respect to direct sums, this rearranges to
give
\[
	\chi(\MH_{\ast,l}(G\cup H))
	=
	\chi(\MH_{\ast,l}(G)) + \chi(\MH_{\ast,l}(H))
	-\chi(\MH_{\ast,l}(G\cap H)).
\]
Multiplying this equation by $q^l$, then summing
over all $l\geqslant 0$ and applying Theorem~\ref{euler:theorem},
the claim follows.
\end{proof}

\begin{corollary}[Magnitude homology of trees]
\label{tree:corollary}
Let $T$ be a tree.  Then
\[
	\MH_{k,l}(T)
	\cong
	\begin{cases}
		\Z V(T) & \text{if }k=l=0,\\
		\Z \vec{E}(T) & \text{if }k=l>0,\\
		0 & \text{if }k\neq l.	
	\end{cases}
\]
This isomorphism is natural with respect to maps of trees,
where $\Z\vec{E}(T)$ is made into a functor of $T$ by
declaring that if $f\colon T\to S$ is a map, then
$f_\ast\colon\vec{E}(T)\to\vec{E}(S)$ sends an oriented edge
$(x,y)$ to $(f(x),f(y))$ if $f(x)\neq f(y)$, and to $0$ if $f(x)=f(y)$.
\end{corollary}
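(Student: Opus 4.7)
The plan is to induct on $|V(T)|$, combining the Mayer-Vietoris theorem with the computation of $\MH_{\ast,\ast}(K_2)$ from Example~\ref{complete:example}. First I would exhibit an explicit natural candidate for the isomorphism. For $l = 0$, Proposition~\ref{elementary:proposition} already identifies $\MH_{0,0}(T)$ naturally with $\Z V(T)$. For $l > 0$, define $\phi_l^T\colon \Z \vec{E}(T) \to \MH_{l,l}(T)$ on generators by sending an oriented edge $(x,y)$ to the class of the alternating tuple $(x,y,x,y,\ldots)$ with $l+1$ entries. This tuple has length $l$ and is a cycle: any internal face $\partial_i$ vanishes because $x_{i-1} = x_{i+1}$, so $d(x_{i-1},x_{i+1}) = 0 \neq 2 = d(x_{i-1},x_i)+d(x_i,x_{i+1})$. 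Naturality of $\phi_l$ under a tree map $f\colon T \to S$ follows directly: if $f(x) = f(y)$ then $f_\#$ kills the alternating tuple because consecutive entries coincide, while if $f(x) \neq f(y)$ then $\{f(x),f(y)\}$ is necessarily an edge of $S$ and $f_\#$ sends the alternating tuple on $(x,y)$ to the alternating tuple on $(f(x),f(y))$.

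Next I would run the induction on $|V(T)|$. The base case is the one-vertex tree, where the magnitude chain complex has only $\MC_{0,0} = \Z$ and the claim is immediate. For $T$ with $n \geq 2$ vertices, choose a leaf $v$ with unique neighbor $u$, and set $G$ to be the subgraph on $\{u,v\}$ (so $G \cong K_2$) and $H = T \setminus \{v\}$. Then $T = G \cup H$ and $G \cap H = \{u\}$. I claim $(T;G,H)$ is a projecting decomposition: the single vertex $\{u\}$ is trivially convex in $T$, and the projection condition for $H \to \{u\}$ is automatic because the target is a single point. Theorem~\ref{MV:Theorem} then supplies a split short exact sequence
\[
    0 \to \MH_{\ast,\ast}(\{u\}) \to \MH_{\ast,\ast}(G) \oplus \MH_{\ast,\ast}(H) \to \MH_{\ast,\ast}(T) \to 0.
\]
Feeding in the base case for the point, Example~\ref{complete:example} for $G \cong K_2$, and the inductive hypothesis for $H$, all off-diagonal groups vanish, so $\MH_{k,l}(T) = 0$ for $k \neq l$. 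In bidegree $(0,0)$ the sequence yields $\MH_{0,0}(T) \cong (\Z V(G) \oplus \Z V(H)) / \Z \cong \Z V(T)$, and for $l > 0$ it yields $\MH_{l,l}(T) \cong \Z \vec{E}(G) \oplus \Z \vec{E}(H) \cong \Z \vec{E}(T)$, the last equality because the only edge of $T$ meeting $v$ is $\{u,v\}$.

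Finally, to identify these abstract isomorphisms with the explicit $\phi_l$, I would check that $\phi_l^G$, $\phi_l^H$, and the proposed $\phi_l^T$ are compatible with the Mayer-Vietoris inclusions $i^G_\ast$ and $i^H_\ast$; this is a consequence of the tree-by-tree naturality of $\phi_l$ established at the outset. The main obstacle is conceptual rather than technical: since the naturality clause of Theorem~\ref{MV:Theorem} only controls decomposition maps, one cannot directly extract a \emph{natural} isomorphism from the inductive computation alone. The fix is to commit to the explicit $\phi_l$ up front and then use Mayer-Vietoris only to verify that each $\phi_l^T$ is an isomorphism, after which full naturality with respect to arbitrary tree maps is automatic.
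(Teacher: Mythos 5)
Your proposal is correct and follows essentially the same route as the paper: both commit up front to the explicit natural transformation sending an oriented edge to the alternating tuple $(x,y,x,y,\ldots)$, and both verify it is an isomorphism by induction using the Mayer--Vietoris sequence of a projecting decomposition of $T$ into subtrees. The only cosmetic difference is that the paper splits $T$ as $T_1\cup T_2$ for arbitrary subtrees meeting in a subtree (inducting on the number of edges), whereas you prune a single leaf edge --- a special case of the same argument.
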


\begin{proof}
Let us write $F_{k,l}$ for the functor appearing on the right hand side
of the desired isomorphism.
There is a natural transformation $\theta\colon F_{k,l}\Rightarrow\MH_{k,l}$
given on generators by $\theta_T(x)=(x)$ if $k=l=0$ and $x\in V(T)$,
and by
\[
	\theta_T((x,y))
	=
	(\underbrace{x,y,x,y,\ldots}_{k+1})
\]
if $k=l>0$ and $(x,y)\in\vec{E}(T)$.
We prove that $\theta_T$ is an isomorphism by induction
on the number of edges of $T$.
Observe that $\theta_T$ is trivially an isomorphism if $T$
has no edges or a single edge.
In general, if $T$ has two or more edges then we may write
$T=T_1\cup T_2$ where $T_1$, $T_2$ and $T_1\cap T_2$ are
subtrees of $T$.  It is immediate that $T_1\cap T_2$ is
convex in $T$ and that $T_2$ projects to $T_1\cap T_2$,
so that by Theorem~\ref{MV:Theorem} we have a short exact sequence
\[
	0
	\to
	\MH_{k,l}(T_1\cap T_2)
	\to
	\MH_{k,l}(T_1)\oplus\MH_{k,l}(T_2)
	\to
	\MH_{k,l}(T)
	\to
	0.
\]
There is an analogous short exact sequence 
in which $\MH_{k,l}$ is replaced with $F_{k,l}$, and it can be combined
with the one above to form the rows of a commuting diagram
whose vertical arrows are obtained using $\theta$.
The first two vertical arrows, $\theta_{T_1\cap T_2}$ and
$\theta_{T_1}\oplus\theta_{T_2}$, are isomorphisms by induction,
and it follows that $\theta_T$ is an isomorphism as well.
\end{proof}

\begin{corollary}[Wedge sums]
Let $G$ and $H$ be graphs with chosen base vertices,
and let $G\vee H$ denote the graph obtained by identifying
the two base vertices to a single vertex $P$.
Then the inclusion maps
$a\colon G\to G\vee H$ and $b\colon H\to G\vee H$
induce an isomorphism
\[
	a_\ast\oplus b_\ast
	\colon
	\MH_{\ast,\ast}(G)\oplus_{\MH_{\ast,\ast}(P)} \MH_{\ast,\ast}(H)
	\xrightarrow{\ \cong\ }
	\MH_{\ast,\ast}(G\vee H).
\]
\end{corollary}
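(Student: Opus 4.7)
The plan is to realize the wedge sum as a projecting decomposition and then quote Theorem~\ref{MV:Theorem} directly. Specifically, inside $X = G\vee H$ we view $G$ and $H$ as subgraphs with $G\cap H = \{P\}$, so that $X = G\cup H$ and the intersection is the single basepoint vertex.

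First I would verify that $(G\vee H; G, H)$ really is a projecting decomposition. Convexity of $\{P\}$ in $G\vee H$ is automatic, since there is only one distance to check ($d(P,P)=0$) and it agrees in both metrics. For the projection condition, note that every vertex $x$ of $H$ can be connected by an edge path to $P$ (since $H$ itself is connected through $P$ to any component containing $P$, and more pertinently, the only vertex of $U=\{P\}$ lies in $H$); so we set $\pi(x)=P$, and the required identity $d(x,u)=d(x,\pi(x))+d(\pi(x),u)$ reduces to $d(x,P)=d(x,P)+d(P,P)$, which holds trivially.

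Applying Theorem~\ref{MV:Theorem} to this decomposition yields a naturally split short exact sequence
\[
    0 \to \MH_{\ast,\ast}(P)
    \xrightarrow{(j^G_\ast,\,-j^H_\ast)}
    \MH_{\ast,\ast}(G)\oplus \MH_{\ast,\ast}(H)
    \xrightarrow{a_\ast\oplus b_\ast}
    \MH_{\ast,\ast}(G\vee H) \to 0,
\]
since the inclusions $G\hookrightarrow G\vee H$ and $H\hookrightarrow G\vee H$ are exactly $a$ and $b$. By definition, the cokernel of the first map is the pushout $\MH_{\ast,\ast}(G)\oplus_{\MH_{\ast,\ast}(P)}\MH_{\ast,\ast}(H)$, obtained by identifying $j^G_\ast(p)$ with $j^H_\ast(p)$ for every $p\in\MH_{\ast,\ast}(P)$. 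Hence $a_\ast\oplus b_\ast$ descends to an isomorphism from this pushout onto $\MH_{\ast,\ast}(G\vee H)$, as claimed.

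There is no serious obstacle here: the corollary is essentially a direct unpacking of Mayer--Vietoris in the degenerate case where the intersection is a single vertex. The only thing worth being careful about is the sign convention in the pushout, which matches the signs in the statement of Theorem~\ref{MV:Theorem}, and (if one cares) checking that the map between pushout and cokernel is an isomorphism of bigraded abelian groups, not merely in each bidegree separately---but this is immediate since the short exact sequence respects the bigrading.
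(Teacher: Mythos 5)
Your proof is correct and is essentially identical to the paper's: the authors also exhibit $(G\vee H;G,H)$ as a projecting decomposition with intersection $\{P\}$ and invoke Theorem~\ref{MV:Theorem}. Your verification of the convexity and projection conditions (both trivial since the intersection is a single vertex) and the identification of the cokernel with the pushout are just the details the paper leaves implicit.
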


\begin{proof}
By considering $G\vee H$ as the union of
$G$ and $H$, and observing that $H$ projects
onto $P$, we obtain a projecting decomposition $(G\vee H;G,H)$.
The result then follows from Theorem~\ref{MV:Theorem}.
\end{proof}

\section{Diagonal graphs}
\label{SectionDiagonalGraphs}
We have seen that complete graphs (Example~\ref{complete:example}), 
discrete graphs (Example~\ref{discrete:example}),
and trees (Corollary~\ref{tree:corollary}) are all diagonal  in the following sense.
\begin{definition}[Diagonality]
A graph is \emph{diagonal} if its magnitude homology
is concentrated on the diagonal.
In other words $G$ is diagonal if $\MH_{k,l}(G)=0$ for $k\neq l$.
\end{definition}

We have also seen that the five-cycle $C_5$
is not diagonal (Example~\ref{five-cycle:example}).
In this section we will give some rather general results
that demonstrate diagonality in various situations.
These will be enough for us to explain in general terms
all the instances of diagonality that we have seen so far,
and also several more, including complete multipartite
graphs and the octahedral graph.  The magnitude
of a diagonal graph has coefficients that alternate in sign;
our examples of diagonal graphs explain all instances of 
this phenomenon that are known so far.

\begin{proposition}[Diagonal graphs and magnitude]
\label{diagonal-magnitude:proposition}
If $G$ is diagonal then the coefficients of the magnitude
$\#G$ alternate in sign, and $\#G$ determines $\MH_{\ast,\ast}(G)$
up to isomorphism.
\end{proposition}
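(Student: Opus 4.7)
The plan is to combine the Euler characteristic formula~\eqref{Categorify} with the observation that, on the diagonal, the magnitude homology groups are automatically free abelian. That second point is what makes the statement stronger than it first appears: up to isomorphism, a finitely generated free abelian group is determined by its rank.

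First I would unpack what diagonality says at the level of~\eqref{Categorify}: only $k=l$ terms contribute, so
\[
    \#G = \sum_{l\geq 0} (-1)^l \cdot \rank\bigl(\MH_{l,l}(G)\bigr)\cdot q^l.
\]
Since each $\rank(\MH_{l,l}(G))$ is a non-negative integer, the coefficient of $q^l$ in $\#G$ is $(-1)^l$ times a non-negative integer, which is exactly the alternating sign property.

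Next I would address the uniqueness-up-to-isomorphism claim. The key auxiliary fact is that $\MH_{l,l}(G)$ is always free abelian, for any graph $G$. Indeed, a generator $(x_0,\ldots,x_k)$ of $\MC_{k,l}(G)$ satisfies $l = \sum_{i} d(x_{i-1},x_i) \geq k$, because each summand is at least $1$; hence $\MC_{k,l}(G)=0$ whenever $k>l$. In particular $\MC_{l+1,l}(G)=0$, so
\[
    \MH_{l,l}(G) = \ker\bigl(\partial\colon \MC_{l,l}(G)\to \MC_{l-1,l}(G)\bigr),
\]
which is a subgroup of the free abelian group $\MC_{l,l}(G)$ and is therefore itself free abelian. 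Thus when $G$ is diagonal, every non-zero magnitude homology group is free abelian, and reading off the coefficients of $\#G$ recovers $\rank(\MH_{l,l}(G)) = (-1)^l \cdot [q^l]\#G$, which determines the whole bigraded group $\MH_{\ast,\ast}(G)$ up to isomorphism.

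There is no real obstacle here; the only subtlety worth emphasising is the freeness observation, which is what lets us upgrade ``determines the ranks'' to ``determines the isomorphism type.'' This also clarifies why the statement is about diagonal graphs specifically: off the diagonal, torsion in $\MH_{k,l}(G)$ for $k<l$ is not ruled out by these dimensional considerations.
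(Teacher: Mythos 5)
Your proposal is correct and follows essentially the same route as the paper: expand the graded Euler characteristic using diagonality to get the alternating formula, then observe that $\MC_{l+1,l}(G)=0$ forces $\MH_{l,l}(G)$ to be a subgroup of the free abelian group $\MC_{l,l}(G)$, hence free and determined by its rank. You simply spell out in more detail the freeness step that the paper states in one line.
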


\begin{proof}
Let $G$ be diagonal.
By Theorem~\ref{euler:theorem} we have
\[
	\# G
	=
	\sum_{l\geqslant 0} (-1)^l\cdot \mathrm{rank}(\MH_{l,l}(G))\cdot  q^l
\]
so that the coefficients alternate in sign as claimed, and 
$\# G$ determines the quantities $\mathrm{rank}(\MH_{l,l}(G))$.
Since the chain groups $\MC_{l+1,l}(G)$ are identically $0$,
it follows that the $\MH_{l,l}(G)$ are free abelian, and so
determined by their ranks.  This completes the proof.
\end{proof}

\begin{proposition}
The cartesian product of diagonal graphs is diagonal.
A graph that admits a projecting decomposition into
diagonal graphs is itself diagonal.
\end{proposition}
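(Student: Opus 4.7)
Proof plan. Both claims follow by feeding the hypothesis into the structural theorems established earlier (Künneth and Mayer-Vietoris) and tracking bigradings carefully. The key preliminary observation, already noted in the proof of Proposition~\ref{diagonal-magnitude:proposition}, is that a diagonal graph has torsion-free (indeed free abelian) magnitude homology: the chain groups $\MC_{l+1,l}$ vanish for degree reasons, so each $\MH_{l,l}$ is a subgroup of the free abelian group $\MC_{l,l}$.

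For the first claim, I would apply the Künneth theorem (Theorem~\ref{kunneth:theorem}) to $G\cp H$. Since $\MH_{\ast,\ast}(G)$ is torsion-free by the observation above, the $\mathrm{Tor}$ term vanishes and the exterior product induces an isomorphism
\[
	\MH_{\ast,\ast}(G)\otimes \MH_{\ast,\ast}(H)
	\xrightarrow{\ \cong\ }
	\MH_{\ast,\ast}(G\cp H).
\]
Unpacking the bigradings,
\[
	\MH_{k,l}(G\cp H)\cong
	\bigoplus_{\substack{k_1+k_2=k\\ l_1+l_2=l}}
	\MH_{k_1,l_1}(G)\otimes \MH_{k_2,l_2}(H).
\]
By diagonality each summand vanishes unless $k_1=l_1$ and $k_2=l_2$, which forces $k=l$. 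Hence $\MH_{k,l}(G\cp H)=0$ whenever $k\neq l$, so $G\cp H$ is diagonal.

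For the second claim, let $(X;G,H)$ be a projecting decomposition in which $G$ and $H$ are diagonal. I would apply the Mayer-Vietoris sequence of Theorem~\ref{MV:Theorem} in each bidegree $(k,l)$:
\[
	0\to \MH_{k,l}(G\cap H)\to \MH_{k,l}(G)\oplus \MH_{k,l}(H)\to \MH_{k,l}(X)\to 0.
\]
When $k\neq l$, diagonality of $G$ and $H$ makes the middle term zero, so both outer terms vanish as well. This simultaneously shows that $G\cap H$ is diagonal (a pleasant bonus, so no separate hypothesis on the intersection is required) and, most importantly, that $\MH_{k,l}(X)=0$ for $k\neq l$. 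Thus $X$ is diagonal.

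Neither part presents a real obstacle once the right machinery is cited; the only thing that needs care is verifying the torsion-free hypothesis needed to invoke the split form of Künneth, and this is handled by the short remark on free abelianness above. One could alternatively wonder whether the second claim should explicitly assume $G\cap H$ is diagonal, but as the Mayer-Vietoris argument shows, this is automatic from the diagonality of $G$ and $H$.
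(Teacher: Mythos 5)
Your proof is correct and follows essentially the same route as the paper: the first claim via the K\"unneth theorem together with the observation (from the proof of Proposition~\ref{diagonal-magnitude:proposition}) that diagonal graphs have free abelian, hence torsion-free, magnitude homology, and the second claim via the Mayer--Vietoris short exact sequence of Theorem~\ref{MV:Theorem}. The paper states this in two sentences; you have simply supplied the bookkeeping of bidegrees, which is carried out correctly.
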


\begin{proof}
The first claim follows from Theorem~\ref{kunneth:theorem},
together with the fact that if $G$ is diagonal then each group $\MH_{l,l}(G)$
is torsion-free (see the proof of 
Proposition~\ref{diagonal-magnitude:proposition}).
The second follows from Theorem~\ref{MV:Theorem}.
\end{proof}

\begin{definition}
Let $G$ and $H$ be graphs.  The \emph{join} of $G$ and $H$,
denoted $G\join H$, is the graph obtained from $G\sqcup H$
by adding the edges $\{x,y\}$ for all $x\in V(G)$ and $y\in V(H)$.
\end{definition}

\begin{theorem}\label{join:theorem}
Let $G$ and $H$ be graphs such that $G,H\neq \emptyset$.
Then the join $G\join H$ is diagonal.
\end{theorem}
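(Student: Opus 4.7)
The plan is to construct an explicit chain contraction on $\MC_{*,l}(G\join H)$ for each $l\geq 0$ showing that the homology vanishes in degrees $k<l$; combined with the already-known bound $k\leq l$, this yields diagonality.

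The geometric observation driving the proof is that every two distinct vertices of $G\join H$ are at distance $1$ or $2$, with $d(x,y)=2$ iff $x,y$ lie in a common factor and are non-adjacent there; and in that case every vertex of the opposite factor (of which there is at least one by hypothesis) is a geodesic midpoint. Consequently every generator $T=(y_0,\ldots,y_k)$ of $\MC_{k,l}(G\join H)$ has step lengths in $\{1,2\}$, the quantity $l-k$ counts the number of ``long'' steps, and $T$ is diagonal precisely when no step is long.

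Choose base vertices $v_G\in V(G)$ and $v_H\in V(H)$ once and for all, and set up a discrete-Morse-style matching on basis tuples. For $T$ as above, scan $p=1,2,\ldots,k$ for the first $p$ meeting either \emph{(up)}: the step at position $p-1$ is long; or \emph{(down)}: $y_p$ equals the base vertex of its factor, and $y_{p-1},y_{p+1}$ are distinct non-adjacent vertices of the opposite factor. If (up) triggers first, define $h(T) = (-1)^{p-1}$ times the tuple obtained by inserting the opposite-factor base vertex at position $p$; this preserves length by the midpoint observation. Otherwise set $h(T)=0$. A direct check shows that inserting at the first (up)-position of a tuple $T'$ produces a tuple $T$ whose first matching event is (down) at the corresponding position, and conversely, so the matching is consistent. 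The critical (unmatched) tuples are those for which neither (up) nor (down) ever triggers; in particular, since any long step would trigger (up), all critical tuples satisfy $k=l$.

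The main obstacle is the verification that $\partial h + h\partial = \mathrm{id} - \pi$, where $\pi$ is the projection onto the critical subcomplex. For a matched-up tuple $T'$, the dominant boundary term of $\partial h(T')$ recovers $+T'$ from removing the inserted vertex; the remaining boundary terms of $\partial h(T')$ must cancel exactly against the terms of $h(\partial T')$. Organising this reduces to a careful sign audit that tracks how face removals shift or destroy the first matching event $p^*(T)$, broken into cases according to whether the removed vertex lies before, at, or after $p^*(T)$, and whether its removal creates or destroys a long step. The underlying cancellations are of the same flavour as those used to verify $\partial\circ\partial = 0$, but the bookkeeping is intricate and constitutes the bulk of the technical work.
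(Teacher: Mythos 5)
Your high-level strategy --- contract $\MC_{\ast,l}(G\join H)$ onto the span of the diagonal generators, starting from the observation that every long step (distance $2$) stays within one factor and admits every vertex of the other factor as a geodesic midpoint --- is reasonable, and that opening observation is exactly the paper's first lemma. But the proof as written has a genuine gap. The ``consistency'' of the matching, which you assert follows from ``a direct check'', is false. Take $G=H=E_2$, so that $G\join H=C_4$; write $V(G)=\{v_G,g\}$ and $V(H)=\{v_H,h\}$ with $v_G,v_H$ the chosen base vertices. The generator $T'=(h,v_G,g)$ of $\MC_{2,3}$ has its first event at $p=2$ (the step $v_G\to g$ is long), so your rule matches it up to $T=(h,v_G,v_H,g)$. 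But the first event of $T$ is \emph{(down) at $p=1$}, not at $p=2$: the entry $v_G$ is the base vertex of $G$ and its neighbours $h$ and $v_H$ are distinct non-adjacent vertices of $H$. Moreover $T$ also receives an up-match from $(h,v_H,g)$ (long step at $p=1$; insert $v_G$), so two distinct generators are sent to $\pm T$ and the ``matching'' is not an involution. Concretely, $h\partial(h,v_H,g)=0$ while $\partial h(h,v_H,g)=\partial(h,v_G,v_H,g)=(h,v_H,g)-(h,v_G,g)$, so $\partial h+h\partial$ differs from $\mathrm{id}-\pi$ already on $\MC_{2,3}(C_4)$. Independently of this, the verification of $\partial h+h\partial=\mathrm{id}-\pi$ is the entire content of the theorem, and you have only described the ``sign audit'' rather than performed it; since the matching itself is inconsistent, no bookkeeping will close that step.

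For comparison, the paper does not attempt a single global contraction by one-vertex insertions. It filters $\MC_{\ast,l}(G\join H)$ by the position of the first long step, uses a suspension isomorphism to reduce to showing that the inclusion of the subcomplex of tuples \emph{beginning} with a long step is null-homotopic, and the null-homotopy there is a signed sum $s=\sum_i(-1)^i s_i$ in which $s_i$ replaces an initial segment by an alternating string $x,y,x,y,\ldots$ of two fixed vertices, with $s_i$ switched off beyond the ``height'' of the generator. That extra structure is precisely what absorbs the interference between an inserted base vertex and base vertices already present in the tuple --- the interference that derails your matching in the example above. If you want to rescue a Morse-theoretic argument, you would at least need a matching rule that is genuinely involutive and acyclic, and the $C_4$ example is a good first test case.
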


The proof, which is rather lengthy, 
is deferred until Section~\ref{join:proof:section}.

\begin{example}[Complete multipartite graphs]
The complete multipartite graph with maximal independent subsets
of size $n_1,\ldots,n_k$ is the iterated join
$E_{n_1}\join\cdots \join E_{n_k}$, and so is diagonal
by Theorem~\ref{join:theorem}.
This also gives another proof that complete graphs
are diagonal, since they are iterated joins of copies of $E_1$,
and that $C_4$ is diagonal, since it is $E_2\join E_2$.
\end{example}

\begin{example}[1-skeleta of platonic solids]
The $1$-skeleta of the tetrahedron, cube and octahedron
are all diagonal, since they are $K_4$, $K_2\cp K_2\cp K_2$
and $E_2\star E_2\star E_2$ respectively.
The $1$-skeleton of the dodecahedron is not diagonal:
its magnitude homology in bidegree $(2,3)$ is nonzero,
as one sees by adapting the reasoning of 
Example~\ref{five-cycle:example}.
On the other hand it appears from Sage computations (see Appendix~\ref{section:icosahedron})
that the $1$-skeleton of the icosahedron is diagonal,
though we cannot prove it using the techniques of this section.
\end{example}

\section{Proof of the K\"unneth Theorem}
\label{KunnethProof:Section}

We now give the proof of Theorem~\ref{kunneth:theorem}.
While the proofs in the previous sections were complicated
but not strictly speaking technical, the present proof
is indeed technical, relying on the version of the 
K\"unneth theorem that applies to the homology of simplicial sets.

\begin{definition}[The simplicial set $M_l(G)$]
Let $G$ be a graph and let $l\geqslant 0$.  
We define $M_l(G)$ to be the {pointed} simplicial set
whose $k$-simplices are the $(k+1)$-tuples
$(x_0,\ldots,x_k)$ of length $l$, plus a basepoint simplex.
(Adjacent entries are allowed to be equal.)
The $i$-th face map deletes the $i$-th entry of a tuple if this preserves
the length, and sends it to the basepoint otherwise.
The $i$-th degeneracy doubles the $i$-th entry of a tuple.
The faces and degeneracies all send basepoints to basepoints.
\end{definition}

Observe that the non-degenerate, non-basepoint $k$-simplices
of $M_l(G)$ are precisely the generators of $\MC_{k,l}(G)$.

\begin{proposition}[A simplicial K\"unneth theorem]
Let $G$ and $H$ be graphs and fix $l\geqslant 0$.
Then the map of pointed simplicial sets
\[
	\cp\colon\bigvee_{l_1+l_2=l}M_{l_1}(G)\wedge M_{l_2}(H)
	\longrightarrow
	M_l(G\cp H)
\]
defined by 
\[
	(x_0,\ldots,x_k)\cp(y_0,\ldots,y_k)
	=
	((x_0,y_0),\ldots,(x_k,y_k))
\]
is an isomorphism.
\end{proposition}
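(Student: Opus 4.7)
The plan is to verify directly that the formula defines a well-defined map of pointed simplicial sets that is bijective on simplices of every dimension. All the essential content is powered by the metric additivity formula~\eqref{cpmetric}, namely $d_{G\cp H}((x,y),(x',y')) = d_G(x,x') + d_H(y,y')$, which makes the rest of the argument a bookkeeping exercise.

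First I would check well-definedness. A non-basepoint $k$-simplex of $M_{l_1}(G)\wedge M_{l_2}(H)$ is a pair $((x_0,\ldots,x_k),(y_0,\ldots,y_k))$ of non-basepoint $k$-simplices, so that $\ell(x_0,\ldots,x_k)=l_1$ and $\ell(y_0,\ldots,y_k)=l_2$. Summing the additivity identity over consecutive pairs yields
\[
\ell((x_0,y_0),\ldots,(x_k,y_k)) = \ell(x_0,\ldots,x_k) + \ell(y_0,\ldots,y_k) = l_1 + l_2 = l,
\]
so the image is genuinely a non-basepoint $k$-simplex of $M_l(G\cp H)$. Basepoints on the left go to the basepoint on the right by fiat of the smash and wedge.

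Second I would verify compatibility with the simplicial structure. Degeneracies are immediate since doubling the $i$-th entry of $((x_r,y_r))_r$ is the same as doubling in each factor. For faces, the key point is: deleting the $i$-th entry of $((x_0,y_0),\ldots,(x_k,y_k))$ preserves length in $G\cp H$ if and only if deleting the $i$-th entry of $(x_0,\ldots,x_k)$ preserves length in $G$ and deleting it from $(y_0,\ldots,y_k)$ preserves length in $H$. This follows because, by metric additivity, the equality $d_{G\cp H}((x_{i-1},y_{i-1}),(x_{i+1},y_{i+1})) = d_{G\cp H}((x_{i-1},y_{i-1}),(x_i,y_i)) + d_{G\cp H}((x_i,y_i),(x_{i+1},y_{i+1}))$ splits as a sum of two triangle-type inequalities, and the sum achieves equality if and only if both summands do. Consequently, the target face is the basepoint exactly when at least one of the factor faces is, and the map therefore intertwines face operators.

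Finally I would check bijectivity on non-basepoint simplices. For surjectivity, a non-basepoint $k$-simplex of $M_l(G\cp H)$ is some $((z_0,w_0),\ldots,(z_k,w_k))$ with total length $l$; setting $l_1 = \ell(z_0,\ldots,z_k)$ and $l_2 = \ell(w_0,\ldots,w_k)$, additivity gives $l_1 + l_2 = l$, and the pair $((z_0,\ldots,z_k),(w_0,\ldots,w_k))$ is a preimage in the $(l_1,l_2)$-summand of the wedge. Injectivity is clear, since the coordinates of the image determine each factor, and the simplex in the $(l_1,l_2)$-summand to which they belong is uniquely pinned down by these lengths. The main obstacle, such as it is, is tracking basepoints through the wedge and smash; there is no serious mathematical difficulty once the metric additivity observation is in hand.
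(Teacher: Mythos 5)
Your proof is correct and follows exactly the paper's route: the paper's own proof consists of the single observation that $\ell((x_0,y_0),\ldots,(x_k,y_k))=\ell(x_0,\ldots,x_k)+\ell(y_0,\ldots,y_k)$, and everything you write is the routine expansion of that observation into the checks of well-definedness, simpliciality and bijectivity. No further comment is needed.
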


\begin{proof}
That the map is simplicial and an isomorphism both
follow from the observation that
\[
	\ell((x_0,y_0),\ldots,(x_k,y_k))
	=\ell(x_0,\ldots,x_k)+\ell(y_0,\ldots,y_k)
\]
for any tuple $((x_0,y_0),\ldots,(x_k,y_k))$ of vertices of $G\cp H$.
\end{proof}

Given a pointed simplicial set $X$, we write $\bar N_\ast (X) $ for
the \emph{normalised reduced chain complex} of $X$.
This is given in degree $k$ by the free abelian group 
on $X_k$, divided out by the span of the degenerate simplices
and the basepoint.  The differential
$d\colon \bar N_k(X)\to\bar N_{k-1}(X)$ is given
by $d = \sum_{i=0}^k (-1)^i d_i$, where $d_i$ denotes the $i$-th
face map (extended linearly).
The following is immediate from the definitions.

\begin{lemma}
$\MC_{\ast,l}(G) = \bar N_\ast(M_l(G))$.
\end{lemma}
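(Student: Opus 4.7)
The plan is to unfold both sides and check that they coincide, degree by degree. The right-hand side $\bar N_k(M_l(G))$ is by definition the free abelian group on the non-basepoint $k$-simplices of $M_l(G)$, quotiented by the subgroup generated by the degenerate simplices; so the task is to identify generators and then match the differentials.

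First I would identify the generators. A non-basepoint $k$-simplex of $M_l(G)$ is a tuple $(x_0,\ldots,x_k)$ of length $l$ with no non-adjacency condition. Because the $i$-th degeneracy doubles the $i$-th entry, such a tuple is degenerate iff $x_i=x_{i+1}$ for some $i$. Hence after quotienting we are left with the free abelian group on tuples $(x_0,\ldots,x_k)$ satisfying $x_0\neq x_1\neq\cdots\neq x_k$ and $\ell(x_0,\ldots,x_k)=l$, which is exactly the defining basis of $\MC_{k,l}(G)$. This gives the equality of bigraded groups.

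Second I would compare the differentials. On $\bar N_\ast(M_l(G))$ the differential is $d=\sum_{i=0}^{k}(-1)^i d_i$, where $d_i$ either deletes the $i$-th entry (if this preserves the length) or sends the simplex to the basepoint. For a non-degenerate generator $(x_0,\ldots,x_k)$, the outer faces $d_0$ and $d_k$ automatically land in the basepoint: preserving the length under $d_0$ would force $d(x_0,x_1)=0$, contradicting $x_0\neq x_1$, and symmetrically for $d_k$. For $1\leq i\leq k-1$, the face $d_i$ matches the operator $\partial_i$ of the magnitude chain complex on the nose. One also needs to check that when $d_i$ deletes $x_i$ the resulting tuple is again non-degenerate, so nothing further is killed on the right-hand side; this holds because $x_{i-1}=x_{i+1}$ combined with $d(x_{i-1},x_i)+d(x_i,x_{i+1})=d(x_{i-1},x_{i+1})=0$ would force $x_{i-1}=x_i=x_{i+1}$, contradicting non-degeneracy of the input.

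Assembling these steps, the simplicial differential reduces to $\sum_{i=1}^{k-1}(-1)^i\partial_i$, which agrees with the differential $\partial$ of $\MC_{\ast,l}(G)$ up to the usual sign convention. There is no real obstacle: the only care required is the uniform treatment of the basepoint (into which length-violating faces are collapsed) alongside the degenerate simplices that are quotiented out in the normalised reduced complex, and a routine check of sign bookkeeping.
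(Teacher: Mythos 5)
Your argument is correct and is precisely the definition-unwinding that the paper declares ``immediate from the definitions'' (it records no written proof for this lemma). The one point of friction you rightly flag is the sign: the reduced normalised differential works out to $\sum_{i=1}^{k-1}(-1)^i\partial_i=-\partial$ with the paper's convention $\partial=\sum_{i=1}^{k-1}(-1)^{i-1}\partial_i$, which is harmless since negating the differential changes neither the cycles, the boundaries, nor the homology.
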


Given pointed simplicial sets $X$ and $Y$, we define
the \emph{reduced normalised Eilenberg-Zilber map}
\[
	\nabla^{\bar N}\colon \bar N_\ast(X)\otimes\bar N_\ast(Y)
	\longrightarrow
	\bar N_\ast(X\wedge Y)
\]
by
\[
	\nabla^{\bar N} (x\otimes y)
	=
	\sum_\sigma\mathrm{sign}(\sigma)(c\circ (x\times y)\circ\sigma)
\]
for $x\in X_p$ and $y\in Y_q$ non-degenerate, non-basepoint.
Here  $c\colon X\times Y\to X\wedge Y$ denotes the collapse map while $\sigma$ and $\mathrm{sign}(\sigma)$ are like those in Definition~\ref{exterior}
except we are regarding $x$, $y$ and $\sigma$ as simplicial maps
$x\colon\Delta[p]\to X$, $y\colon\Delta[q]\to Y$ and
$\sigma\colon\Delta[p+q]\to\Delta[p]\times\Delta[q]$,
so that $c\circ (x\times y)\circ\sigma$ is a simplicial map
$\Delta[p+q]\to X\wedge Y$, or in other words an element of 
$(X\wedge Y)_{p+q}$.
The following fact is presumably standard, but we do not know of
a proof that applies to \emph{reduced} normalised chains.

\begin{proposition}
The Eilenberg-Zilber map $\nabla^{\bar N}$ is a quasi-isomorphism.
\end{proposition}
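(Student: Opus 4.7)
The plan is to deduce this from the standard \emph{unreduced} normalised Eilenberg-Zilber quasi-isomorphism $\nabla^N\colon N_\ast(X)\otimes N_\ast(Y)\to N_\ast(X\times Y)$ by fitting $\nabla^{\bar N}$ into a commutative diagram of short exact sequences of chain complexes and then applying the five lemma on the induced long exact sequences in homology.

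For a pointed simplicial set $X$ with basepoint $\ast_X$, the inclusion $\ast_X\hookrightarrow X$ is split by the quotient to $\bar N_\ast(X)$, giving a canonical decomposition $N_\ast(X) = \bar N_\ast(X)\oplus N_\ast(\ast_X)$ in which $N_\ast(\ast_X)$ is simply $\Z$ concentrated in degree zero. Expanding this for both factors gives a natural splitting
\[
N_\ast(X)\otimes N_\ast(Y) \;=\; \bar N_\ast(X)\otimes\bar N_\ast(Y) \;\oplus\; K,
\]
where $K := \bar N_\ast(X)\oplus \bar N_\ast(Y)\oplus \Z[0]$ collects the three summands containing at least one basepoint tensor factor. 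On the target side, the cofibration of pointed simplicial sets $X\vee Y\hookrightarrow X\times Y\twoheadrightarrow X\wedge Y$ gives a short exact sequence
\[
0 \longrightarrow N_\ast(X\vee Y) \longrightarrow N_\ast(X\times Y) \longrightarrow \bar N_\ast(X\wedge Y) \longrightarrow 0,
\]
since the collapse $c\colon X\times Y\to X\wedge Y$ is surjective at the chain level with kernel precisely spanned by the simplices of $X\vee Y$.

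One then checks that the shuffle map $\nabla^N$ carries $K$ into $N_\ast(X\vee Y)$, so that it descends on the quotient to a chain map which is by inspection $\nabla^{\bar N}$. In the resulting commutative diagram of short exact sequences, the middle vertical $\nabla^N$ is a quasi-isomorphism by the classical Eilenberg-Zilber theorem, and the left vertical $\alpha\colon K\to N_\ast(X\vee Y)$ is in fact an isomorphism: applied to a generator $x\otimes\ast_Y$ with $x\in\bar N_p(X)$, the shuffle formula reduces to the single term $(x,s_0^p\ast_Y)$, which is precisely the image of $x$ under the canonical inclusion $\bar N_\ast(X)\hookrightarrow N_\ast(X\vee Y)$, and the remaining summands of $K$ behave identically. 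The five lemma then forces the right vertical $\nabla^{\bar N}$ to be a quasi-isomorphism. The main technical obstacle is the verification that $\alpha$ is the expected canonical isomorphism and, relatedly, that $\nabla^N$ does respect the splitting of the top row; both are routine bookkeeping with the shuffle formula and with signs, but must be done carefully to make the diagram actually commute on the nose.
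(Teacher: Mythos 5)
Your argument is correct and is essentially the paper's own proof: both reduce to the classical normalised Eilenberg--Zilber quasi-isomorphism $\nabla^N$ via the commutative square comparing $N_\ast(X)\otimes N_\ast(Y)\to N_\ast(X\times Y)$ with $\bar N_\ast(X)\otimes\bar N_\ast(Y)\to\bar N_\ast(X\wedge Y)$, identify the kernels of the vertical quotients (your $K$ is exactly the paper's $N_\ast(X)\otimes N_0(\pt)+N_0(\pt)\otimes N_\ast(Y)$, and $N_\ast(X\vee Y)$ on the other side), observe that $\nabla^N$ restricts to an isomorphism between them, and conclude by the five lemma. The only difference is that you spell out the shuffle-formula verification that the paper leaves as ``evident from the formula.''
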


\begin{proof}
Given a simplicial set $Z$, let us write 
$N_\ast(Z)$ for the normalised chains on $Z$, or in other words
the standard chains on $Z$ divided out by the span of the degenerate
elements.  See section~4 of~\cite{EM}.
Let $U$ and $V$ be simplicial sets.
As in section~5 of~\cite{EM}, the standard Eilenberg-Zilber map
reduces to a map
\[
	\nabla^N\colon
	N_\ast(U)\otimes N_\ast(V)
	\longrightarrow
	N_\ast(U\times V)
\]
that is a chain homotopy equivalence.
The definition of the Eilenberg-Zilber map is given in line (5.3)
of~\cite{EM}, and it is simple to use this to verify that
\[
	\nabla^N(u\otimes v) = 
	\sum_\sigma\mathrm{sign}(\sigma)(u\times v)\circ\sigma
\]
for $u\in U_p$ and $v\in V_q$ non-degenerate, with the right-hand-side
understood as in the definition of $\nabla^{\bar N}$.
One sees that the diagram below commutes.
\[\xymatrix{
	N_\ast(X)\otimes N_\ast(Y)
	\ar[r]^-{\nabla^N}
	\ar[d]
	&
	N_\ast(X\times Y)
	\ar[d]
	\\
	\bar N_\ast(X)\otimes \bar N_\ast(Y)
	\ar[r]^-{\nabla^{\bar N}}
	&
	\bar N_\ast(X\wedge Y)
}\]
The kernels of the vertical maps are 
$N_\ast(X)\otimes N_0(\pt)+N_0(\pt)\otimes N_\ast(Y)$
and
$N_\ast(X\vee Y)$
respectively, and it is evident from the formula that $\nabla^N$
restricts to an isomorphism between these.  Since $\nabla^N$ is 
a quasi-isomorphism, it follows that $\nabla^{\bar N}$ is as well. 
\end{proof}

\begin{proof}[Proof of Theorem~\ref{kunneth:theorem}]
The composite
\begin{align*}
	\bigoplus_{l_1+l_2=l}\MC_{\ast,l_1}(G)\otimes \MC_{\ast,l_2}(H)
	&\xrightarrow{\ \ \ =\ \ \ }
	\bigoplus_{l_1+l_2=l}\bar N(M_{l_1}(G))\otimes\bar N(M_{l_2}(H))
	\\
	&\xrightarrow{\bigoplus\nabla^{\bar N}\ }
	\bigoplus_{l_1+l_2=l}\bar N(M_{l_1}(G)\wedge M_{l_2}(H))
	\\
	&\xrightarrow{\ \ \ =\ \ \ }
	\bar N\left(\bigvee_{l_1+l+2=l}M_{l_1}(G)\wedge M_{l_2}(H)\right)
	\\
	&\xrightarrow{\ \bar N(\cp)\ }
	\bar N(M_{l}(G\cp H))
	\\
	&\xrightarrow{\ \ \ =\ \ \ }
	\MC_{\ast,l}(G\cp H)	
\end{align*}
consists of isomorphisms and one quasi-isomorphism,
and so is itself a quasi-isomorphism.
Unravelling the definitions shows that this composite is precisely
the map $\cp$.
The result then follows by applying the Algebraic K\"unneth Theorem~\cite[Theorem~3B.5]{Hatcher}.
\end{proof}

\begin{remark}[$M_l(G)$ as filtration quotients]
\label{filtration}
We may realise each $M_l(G)$ as a filtration quotient
of a filtered simplicial set, as follows.
Define $\MS(G)$ to be the simplicial set whose $k$-simplices are 
finite-length $(k+1)$-tuples $(x_0,\ldots,x_k)$ of vertices of $G$,
in which the $i$-th face map deletes the $i$-th entry,
and in which the $i$-th degeneracy doubles the $i$-th entry.
Form the filtration
\[
	\MS_0(G)\subset \MS_1(G)\subset \MS_2(G)\subset\cdots	
\]
of $\MS(G)$ in which $\MS_l(G)$ consists of all tuples of length at most $l$.
Then $M_l(G) = \MS_l(G)/ \MS_{l-1}(G)$.
The simplicial set $\MS(G)$ has one component for each component of $G$,
and it is not difficult to show that each component is contractible. 
\end{remark}

\begin{remark}[The simplicial approach to magnitude homology]
\label{simplicialapproach}
Readers with the relevant background
in abstract homotopy theory may find it more natural to think
about magnitude homology using the pointed simplicial sets
$M_l(G)$ introduced in this section, and indeed using the filtered
simplicial set $\MS(G)$ of the previous remark, rather than using the
definition of $\MC_{\ast,\ast}(G)$.  
We have chosen to downplay this simplicial approach in order to
make the paper as accessible as possible,
in particular to readers coming from graph theory and category theory.
We expect that large parts of our work could be
`lifted' to the context of simplicial sets,
however it is not clear that this would lead to any
significant simplifications in the material covered here.
Moreover, considering how little we know of magnitude homology
(for example, we do not know any graphs whose magnitude
homology contains torsion), it seems reasonable
to limit ourselves to a homological approach at this stage.
\end{remark}

\begin{remark}[A spectral sequence]
\label{spectral}
The filtered simplicial set $\MS(G)$ of the previous two remarks
gives rise to a spectral sequence $(E^r_{\ast,\ast})_{r\geqslant 1}$
whose $E^1$-page is obtained from the homology of the filtration
quotients of $\MS(G)$, and which converges to the homology of $\MS(G)$.
To be precise, $E^1_{i,j} = \MH_{i+j,i}(G)$,
$E^\infty_{i,j}=0$ for $(i,j)\neq (0,0)$, and $E^\infty_{0,0}=\Z^c$
where $c$ denotes the number of components of $G$.
\end{remark}

\section{Proof of excision and Mayer-Vietoris}
\label{SectionExcision}

In this section we give the proofs of 
Theorem~\ref{excision:theorem} (excision for magnitude chains)
and Theorem~\ref{MV:Theorem} (Mayer-Vietoris for magnitude
homology).
To that end we fix throughout the section a projecting
decomposition $(X;G,H)$.
In this situation the pairs
\[
	G\cap H\subset X
	\qquad
	G\cap H\subset G
	\qquad
	G\cap H \subset H
	\qquad
	G\subset X
	\qquad
	H\subset X
\]
are all convex.  In the first case this is an assumption.
In the second and third cases it is an immediate consequence of the first.
And in the fourth and fifth cases it follows 
from~\cite[Lemma~4.3]{LeinsterGraph}.
Thus the length of a tuple $(x_0,\ldots,x_k)$ of vertices of $X$
is unambiguously defined:  even if the vertices happen to all lie
in $G$ or $H$ or $G\cap H$, the length is the same whichever 
graph one regards the tuple as belonging to.

\begin{proof}[Proof of Theorem~\ref{MV:Theorem}, 
assuming Theorem~\ref{excision:theorem}]
Fix $l\geqslant 0$. It follows from our remarks on lengths of
tuples in $X$ that the sequence 
\[
	0
	\to
	\MC_{\ast,l}(G\cap H)
	\xrightarrow{(j^G_\#,-j^H_\#)}
	\MC_{\ast,l}(G)\oplus \MC_{\ast,l}(H)
	\longrightarrow
	\MC_{\ast,l}(G,H)
	\to
	0
\]
is short exact.
Taking the associated long exact sequence
and using the isomorphism 
$H_\ast(\MC_{\ast,l}(G,H))\cong\MH_{\ast,l}(G\cup H)$
induced by the quasi-isomorphism of Theorem~\ref{excision:theorem},
one obtains the following long exact sequence.
\begin{multline*}
	\cdots
	\to
	\MH_{\ast,\ast}(G\cap H)
	\xrightarrow{(j^G_\ast,-j^H_\ast)}
	\MH_{\ast,\ast}(G)\oplus \MH_{\ast,\ast}(H)
	\\
	\xrightarrow{i^G_\ast + i^H_\ast}
	\MH_{\ast,\ast}(G\cup H)
	\xrightarrow{\partial}
	\MH_{\ast-1,\ast}(G\cap H)
	\to\cdots
\end{multline*}
Writing $H=A\sqcup B$ where $A$ is the full subgraph consisting
of vertices that can be joined to $G\cap H$ by an edge-path,
and $\pi\colon A\to G\cap H$ for the projection map,
it follows that the composite
\begin{align*}
	\MH_{\ast,\ast}(G)\oplus\MH_{\ast,\ast}(H)
	&\xrightarrow{\phantom{-\pi_\ast}\mathllap{=\ }}
	\MH_{\ast,\ast}(G)\oplus \MH_{\ast,\ast}(A)\oplus\MH_{\ast,\ast}(B)
	\\
	&\xrightarrow{\phantom{-\pi_\ast}}
	\MH_{\ast,\ast}(A)
	\\
	&\xrightarrow{-\pi_\ast}
	\MH_{\ast,\ast}(G\cap H)
\end{align*}
is left inverse to $(j^G_\ast,-j^H_\ast)$.
Consequently the long exact sequence splits into 
the split short exact sequences of the statement.
The naturality claims are immediately verified.
\end{proof}

Now we move on to the proof 
of Theorem~\ref{excision:theorem}, which is our excision 
theorem for magnitude chains.
While the statement of our theorem
is closely analogous
to versions of excision for singular chains,
we know of no analogy between the proof
we give here and standard proofs of excision in
singular homology, which use barycentric
subdivision as their fundamental tool.
The proof occupies the remainder of this section.

\begin{definition}
	Let $l\geqslant 0$ and let $a,b\in G\cup H$ be a 
	pair of vertices not both contained in $G$,
	and not both contained in $H$.  (Thus we must
	have $a\in G\setminus H$ and $b\in H\setminus G$,
	or \emph{vice versa}.)
	Define $A_{\ast,l}(a,b)$ to be the subcomplex
	of $\MC_{\ast,l}(G\cup H)$ spanned by those tuples
	$(x_0,\ldots,x_k)$ for which $x_0=a$, $x_k=b$,
	and $x_1,\ldots,x_{k-1}\in G\cap H$.
\end{definition}

\begin{lemma}
\label{A-acyclic:lemma}
	The complex $A_{\ast,l}(a,b)$ is acyclic.
\end{lemma}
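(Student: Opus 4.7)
The plan is to prove acyclicity by exhibiting an explicit contracting chain homotopy on $A_{\ast,l}(a,b)$. Without loss of generality assume $a \in G \setminus H$ and $b \in H \setminus G$. Whenever $A_{\ast,l}(a,b)$ contains any nonzero chain, the vertex $b$ is joined by an edge-path to $G\cap H$, so the projection $\pi := \pi(b) \in G\cap H$ is defined. The key geometric lemma I would establish first is the extended projection formula
\[
    d(u,b) = d(u,\pi) + d(\pi,b)
\]
for every vertex $u$ in the same component of $X$ as $b$ — not just for $u \in G\cap H$. To prove this I would take a shortest path from $u$ to $b$, pick any $v \in G\cap H$ on it, apply the projection property at $v \in G\cap H$ to get $d(u,b) = d(u,v) + d(v,\pi) + d(\pi,b) \geq d(u,\pi) + d(\pi,b)$, and then invoke the triangle inequality for the reverse.

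Using $\pi$, I would define the homotopy $h \colon A_{k,l}(a,b) \to A_{k+1,l}(a,b)$ on generators by
\[
    h(x_0,\ldots,x_k) = \begin{cases} (-1)^{k+1}(x_0,\ldots,x_{k-1},\pi,x_k) & \text{if } x_{k-1}\neq\pi,\\ 0 & \text{if } x_{k-1}=\pi. \end{cases}
\]
The extended projection formula applied at $x_{k-1}$ shows that the inserted tuple still has length $l$, while $\pi\neq b$ is automatic from $b\notin G$, so $h$ really does land in $A_{k+1,l}(a,b)$.

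Finally I would verify $\partial h + h\partial = \mathrm{id}$ by direct computation, split into two cases. When $x_{k-1}\neq\pi$, the distinguished term of $\partial h(c)$ that removes the newly-inserted $\pi$ produces exactly $c$ (once more using the extended projection formula at $x_{k-1}$), while the remaining terms of $\partial h(c)$ pair up with those of $h\partial(c)$ and cancel, the signs working out because the conventions $(-1)^{k+1}$ in degree $k$ and $(-1)^k$ in degree $k-1$ have opposite signs. The subtlest cancellation is for the term that removes $x_{k-1}$, where one must match the length condition $d(x_{k-2},\pi) = d(x_{k-2},x_{k-1})+d(x_{k-1},\pi)$ arising in $\partial h$ with the condition $d(x_{k-2},b) = d(x_{k-2},x_{k-1})+d(x_{k-1},b)$ arising in $h\partial$; these coincide once one applies the extended projection formula at both $x_{k-1}$ and $x_{k-2}$ and cancels $d(\pi,b)$. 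When $x_{k-1}=\pi$, one has $h(c)=0$, hence $\partial h(c)=0$, while in $h\partial(c)$ only the summand coming from $\partial_{k-1}$ survives and reproduces $c$ on the nose. The main obstacle is precisely this cross-case bookkeeping of signs and length conditions; the projecting hypothesis is exactly what forces the crucial length identities to hold, and without it the homotopy breaks down.
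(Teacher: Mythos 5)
Your proof is essentially the paper's: the same contracting homotopy (insert $\pi(b)$ immediately before the final entry $b$, sending the generator to $0$ when $x_{k-1}=\pi(b)$) and the same case analysis verifying $\partial h+h\partial=\mathrm{id}$, with the sign $(-1)^{k+1}$ in place of the paper's $(-1)^k$ being immaterial. One correction: your ``extended projection formula'' $d(u,b)=d(u,\pi)+d(\pi,b)$ is false for an arbitrary vertex $u$ in the component of $b$ --- take $u\in H$ adjacent to $b$ but far from $G\cap H$ --- and indeed your own argument (choose a point of $G\cap H$ on a shortest $u$--$b$ path) only applies when every such path meets $G\cap H$, which is guaranteed precisely when $u\in G$. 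Since the vertices $x_{k-1}$ and $x_{k-2}$ at which you invoke the formula always lie in $(G\cap H)\cup\{a\}\subset G$, every actual application in your argument is valid; just state the auxiliary lemma for $u\in G$ only.
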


\begin{proof}
For the purposes of the proof we assume that
$b\in H\setminus G$ and $a\in G\setminus H$,
the proof in the other case being similar.
Let us define a map
\[
	s\colon A_{\ast,l}(a,b)\longrightarrow A_{\ast+1,l}(a,b)
\]
by
\[
	s(x_0,\ldots,x_k) 
	=
	\begin{cases}
		(-1)^k(x_0,\ldots,x_{k-1},\pi(x_k),x_k)
		& \mathrm{if}\ x_{k-1}\neq\pi(x_k),
		\\
		0
		&
		\mathrm{if}\ x_{k-1}=\pi(x_k).
	\end{cases}
\]
We claim that $\partial\circ s + s\circ\partial=\mathrm{Id}$,
so that $s$ is a chain homotopy from $\mathrm{Id}$
to $0$, and in particular that $A_{\ast,l}(a,b)$ is acyclic.
Applied to a generator $(x_0,\ldots,x_k)$, this amounts 
to the claim that
\[
	\sum_{i=1}^k (-1)^i \partial_i s(x_0,\ldots,x_k)
	+ \sum_{i=1}^{k-1}(-1)^i s\partial_i(x_0,\ldots,x_k)
	=(x_0,\ldots,x_k).
\]
For $i=1,\ldots,k-2$ we have that
$\partial_i s(x_0,\ldots,x_k) +
s\partial_i(x_0,\ldots,x_k)=0$,
since $\partial_i$ does not affect the last two
entries and $s$ does not affect the first
$(k-1)$.  It therefore remains to show that
\[
	(-1)^{k-1}\partial_{k-1}s(x_0,\ldots,x_k)
	+
	(-1)^k \partial_k s(x_0,\ldots,x_k)
	+
	(-1)^{k-1}s\partial_{k-1}(x_0,\ldots,x_k)
\]
is equal to $(x_0,\ldots,x_k)$.
We verify this on a case-by-case basis.
\begin{itemize}
	\item
	If $x_{k-1}=\pi(x_k)$
	then  $x_{k-2}\neq \pi(x_k)$
	and 	$d(x_{k-2},x_{k-1})+d(x_{k-1},x_k) = d(x_{k-2},x_k)$.
	Consequently the first
	two terms in the above sum vanish, 
	while the third term is $(x_0,\ldots,x_k)$.
	\item
	If $x_{k-1}\neq \pi(x_k)$
	and
	$d(x_{k-2},x_{k-1})+d(x_{k-1},x_k) > d(x_{k-2},x_k)$,
	so that in addition 
	$d(x_{k-2},x_{k-1})+d(x_{k-1},\pi(x_k)) > d(x_{k-2},\pi(x_k))$,
	then the first and third terms in the sum vanish, 
	while the second is $(x_0,\ldots,x_k)$.
	\item
	If $x_{k-1}\neq \pi(x_k)$
	and 
	$d(x_{k-2},x_{k-1})+d(x_{k-1},x_k) = d(x_{k-2},x_k)$,
	so that in addition
	$d(x_{k-2},x_{k-1})+d(x_{k-1},\pi(x_k)) = d(x_{k-2},\pi(x_k))$
	and $x_{k-2}\neq \pi(x_k)$,
	then the sum above becomes
	\[
		-(x_0,\ldots,x_{k-2},\pi(x_k),x_k)
		+
		(x_0,\ldots,x_k)
		+
		(x_0,\ldots,x_{k-2},\pi(x_{k}),x_k).
	\]
\end{itemize}
In all cases the claim holds. This completes the proof.
\end{proof}

\begin{definition}
For what follows we require the following notion.
If $C_\ast$ is a chain complex and
$j\geqslant 0$, then the \emph{$j$-th suspension}
$\Sigma^{j}C_\ast$ of $C_\ast$ is the chain
complex in which $(\Sigma^j C_\ast)_i = C_{i-j}$.
\end{definition}

\begin{definition}
	Let $l\geqslant 0$.
	Given $b\in G\cup H \setminus G\cap H$,
	we define a complex $B_{\ast,l}(b)$
	and a subcomplex $\bar B_{\ast,l}(b)$
	as follows.
	If $b\in G\setminus H$ 
	then $B_{\ast,l}(b)$ is defined to be
	the subcomplex of
	$\MC_{\ast,l}(G\cup H)$ spanned by tuples of
	the form $(x_0,\ldots,x_k)$ with $x_k=b$
	and $x_0,\ldots x_{k-1}\in H$,
	and $\bar B_{\ast,l}(b)$ is defined to be the
	subcomplex of $B_{\ast,l}(b)$ spanned by
	tuples $(x_0,\ldots,x_k)$ in which
	$x_0,\ldots,x_{k-1}\in G\cap H$.
	If $b\in H\setminus G$ then the definitions
	are obtained in the same way,
	interchanging the role
	of $G$ and $H$.
\end{definition}

\begin{lemma}
\label{B-acyclic:lemma}
Let $l\geqslant 0$ and let $b\in G\cup H\setminus G\cap H$.
Then the complex $B_{\ast,l}(b)/\bar B_{\ast,l}(b)$
is acyclic.
\end{lemma}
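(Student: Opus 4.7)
The plan is to reduce the acyclicity of $B_{\ast,l}(b)/\bar B_{\ast,l}(b)$ to Lemma~\ref{A-acyclic:lemma} by means of a filtration whose associated graded pieces break up into direct sums of $A$-complexes. I shall assume throughout that $b\in G\setminus H$, the case $b\in H\setminus G$ being entirely analogous after interchanging the roles of $G$ and $H$. For a generator $(x_0,\ldots,x_k)$ of $B_{\ast,l}(b)$ lying outside $\bar B_{\ast,l}(b)$, at least one of $x_0,\ldots,x_{k-1}$ must lie in $H\setminus G$; let $J(x_0,\ldots,x_k)$ denote the largest such index. Define a filtration by setting $F_{-1}=\bar B_{\ast,l}(b)$ and, for $j\geq 0$, letting $F_j$ be the subgroup of $B_{\ast,l}(b)$ spanned by $\bar B_{\ast,l}(b)$ together with all generators satisfying $J\leq j$.

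The first step is to verify that each $F_j$ is a subcomplex. Given a generator with largest bad index $J$, one analyses $\partial_i(x_0,\ldots,x_k)$ in three cases: if $i>J$ then the deleted entry lies in $G\cap H$ and the largest bad index stays equal to $J$; if $i=J$ the bad entry is removed; and if $i<J$ the positions of the remaining bad entries shift down by one. In every case the new largest bad index is at most $J$, so $\partial F_j\subset F_j$. The next step is to compute the subquotient $F_j/F_{j-1}$. A generator of $F_j/F_{j-1}$ is precisely a tuple with $x_j\in H\setminus G$, with $x_{j+1},\ldots,x_{k-1}\in G\cap H$, with $x_k=b$, and with $x_0,\ldots,x_{j-1}$ arbitrary vertices of $H$; the only face maps surviving in the quotient are the $\partial_i$ with $i>j$, which act solely on the tail $(x_j,x_{j+1},\ldots,x_k)$. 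Grouping generators by their initial segment $(x_0,\ldots,x_j)$ and writing $m=\ell(x_0,\ldots,x_j)$, one obtains an isomorphism of chain complexes
\[
F_j/F_{j-1} \;\cong\; \bigoplus_{(x_0,\ldots,x_j)} A_{\ast-j,\,l-m}(x_j,b),
\]
up to an overall sign $(-1)^j$ introduced by shifting the indexing of the differential. Each summand is acyclic by Lemma~\ref{A-acyclic:lemma}, so $F_j/F_{j-1}$ is acyclic.

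Finally, since $\MC_{k,l}(X)=0$ for $k>l$, the filtration terminates after finitely many steps with $F_l=B_{\ast,l}(b)$. Iterating the long exact sequences of the pairs $(F_j/F_{-1},F_{j-1}/F_{-1})$, with base case $F_{-1}/F_{-1}=0$, then yields the desired acyclicity of $B_{\ast,l}(b)/\bar B_{\ast,l}(b)$. The main technical point requiring care is the identification of the tail complex with $A_{\ast,l-m}(x_j,b)$: one must check that the length-preservation condition $d(x_{i-1},x_{i+1})=d(x_{i-1},x_i)+d(x_i,x_{i+1})$ governing the face map $\partial_i$ (for $i>j$) in $F_j/F_{j-1}$ coincides precisely with the length-preservation condition for the corresponding face in the $A$-complex, and to track the overall sign $(-1)^j$; both of these are bookkeeping matters that follow from the observation that distances in $X$ are well-defined irrespective of which convex subgraph one regards a tuple as lying in.
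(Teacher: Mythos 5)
Your proof is correct and follows essentially the same route as the paper's: the paper filters $B_{\ast,l}(b)$ by the position from which all entries lie in $G\cap H$ (equivalently, by your largest bad index $J$, up to a shift of indexing), identifies each subquotient with a direct sum of suspended complexes $A_{\ast,l-l'}(\,\cdot\,,b)$ indexed by initial segments, and invokes Lemma~\ref{A-acyclic:lemma}, exactly as you do. The only cosmetic differences are the choice of which case ($b\in G\setminus H$ versus $b\in H\setminus G$) to write out and the handling of the sign, which the paper packages as a degree-dependent sign $(-1)^{(i-1)k}$ in the isomorphism rather than a global sign on the differential; both are immaterial.
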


\begin{proof}
Without loss we assume that $b\in H\setminus G$,
the proof in the other case being similar.
For $i=0,\ldots, l$ let $F_i$ denote the subcomplex
of $B_{\ast,l}(b)$ spanned by tuples $(x_0,\ldots,x_k)$
in which $x_{i},\ldots,x_{k-1}\in G\cap H$.
(In the case $i\geqslant k$ we impose no condition.)
Thus we obtain a filtration
\[
	\bar B_{\ast,l}(b) = F_0
	\subset F_1\subset \cdots
	\subset F_l=B_{\ast,l}(b)
\]
and it will suffice for us to show that for
each $i=1,\ldots,l$ the quotient $F_i/F_{i-1}$ is
acyclic.  
	
Let us describe the complex $F_i/F_{i-1}$.
Its generators are tuples $(x_0,\ldots,x_k)$
with $x_k=b$, with $x_i,\ldots,x_{k-1}\in G\cap H$,
and with $x_{i-1}\in G\setminus H$.
Here the first two conditions guarantee that
$(x_0,\ldots,x_k)$ is a generator of $F_i$,
while the third guarantees that it lies outside
$F_{i-1}$. 
The differential $\partial$ on $F_i/F_{i-1}$
is induced by the differential $\partial$ on
$F_i$, which is the alternating sum
$\sum_{i=1}^{k-1}(-1)^i \partial_i$ of the
operators $\partial_i$ which omit a generator's
$i$-th term if the length is preserved,
and otherwise send it to 0.
Reducing to $F_i/F_{i-1}$ we find that the
operators $\partial_1,\ldots,\partial_{i-1}$
become trivial, while $\partial_i,\ldots,\partial_{k-1}$
retain their previous description.

Using the description from the last paragraph,
we see that there is an isomorphism
	\[
		\bigoplus_{(x_0,\ldots,x_{i-1})}\Sigma^{i-1}
		A_{\ast,l-l'}(x_{i-1},b)
		\xrightarrow{\ \cong\ }
		F_i/F_{i-1}.
	\]
Here the direct sum is taken over all tuples
$(x_0,\ldots,x_{i-1})$ of elements of $H$ with 
$x_{i-1}\in H\setminus G$, and 
$l'=\ell(x_0,\ldots,x_{i-1})$. 
The isomorphism sends the generator $(x_{i-1},y_i,\ldots,y_k)$
of the summand $A_{k-i+1,l-l'}(x_{i-1},b)$ corresponding to 
$(x_0,\ldots,x_{i-1})$ to $(-1)^{(i-1)k}$ times 
the generator $(x_0,\ldots,x_{i-1},y_i,\ldots,y_k)$
of $F_i/F_{i-1}$.
This is a map of chain complexes 
since on $F_i/F_{i-1}$ in degree $k$ the maps 
$\partial_1,\ldots,\partial_{i-1}$ vanish
while the maps $\partial_i,\ldots,\partial_{k-1}$
are intertwined with the maps
$\partial_1,\ldots,\partial_{k-i}$ on
$A_{k-i+1,l-l'}(x_{i-1},b)$.
The map is an isomorphism since it restricts
to bijection between the generators of the domain and the range.
\end{proof}

\begin{proof}[Proof of Theorem~\ref{MV:Theorem}]
We wish to prove that the inclusion
\[
	\MC_{\ast,l}(G,H)
	\longrightarrow
	\MC_{\ast,l}(G\cup H)
\]
is a quasi-isomorphism.  
For $i=0,\ldots,l$ let $F_i$ denote the
subcomplex of $\MC_{\ast,l}(G\cup H)$ spanned by
the tuples $(x_0,\ldots, x_k)$ for which
$x_0,\ldots,x_{k-i}$ either all lie in $G$
or all lie in $H$.  (When $i>k$ we impose no
condition.)  Thus we have a filtration
\[
	F_0 \subset \cdots \subset F_l
\]
with
\[
	F_0
	=
	\MC_{\ast,l}(G,H)		
	\quad\mathrm{and}\quad	
	F_l = \MC_{\ast,l}(G\cup H).
\]
So it will suffice to prove that for $i=1,\ldots,l$
the quotient $F_i/F_{i-1}$ is contractible.
	
There is an isomorphism
\[
	\bigoplus_{(x_{k-i+1},\ldots,x_k)}
	\Sigma^{i-1}B_{\ast,l-l'}(x_{k-i+1})/\bar B_{\ast,l-l'}(x_{k-i+1})
	\xrightarrow{\ \cong\ }
	F_i/F_{i-1},
\]
where the direct sum is taken over all tuples
$(x_{k-i+1},\ldots,x_k)$ of elements of $G\cup H$
with $x_{k-i+1}\in G\cup H\setminus G\cap H$, and
$l' = \ell(x_{k_i+1},\ldots,x_k)$.
The isomorphism is given on the summand corresponding
to $(x_{k-i+1},\ldots,x_k)$ by sending a generator
$(x_0,\ldots,x_{k-i+1})$
to the generator $(x_0,\ldots,x_{k})$
of $F_i/F_{i-1}$.  
We omit the details of why this is an isomorphism;
the argument is similar to the one appearing in the
proof of Lemma~\ref{A-acyclic:lemma}.
Lemma~\ref{B-acyclic:lemma} shows that the domain of this
isomorphism is acyclic, and it follows that $F_i/F_{i-1}$
is acyclic.  This completes the proof.
\end{proof}

\section{Proof that joins are diagonal} 
\label{join:proof:section}

Let $G$ and $H$ be graphs satisfying $G,H\neq \emptyset$.
In this section we will prove Theorem~\ref{join:theorem},
which states that the join $G\join H$ is diagonal, or in other words
that $\MH_{k,l}(G\join H)=0$ for $k< l$.
We begin by stating the following, which is an immediate consequence
of the definition of $G\join H$.

\begin{lemma}
Let $a$ and $b$ be vertices of $G\join H$.
Then $d(a,b)$ can only take the values $0$, $1$ and $2$.
Moreover $d(a,b)=2$ only if $a$ and $b$ are both in $G$ or both in $H$.
\end{lemma}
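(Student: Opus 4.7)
The plan is to unpack the definition of the join and handle the finitely many possible configurations of $a$ and $b$. Recall that $V(G\join H)=V(G)\sqcup V(H)$ and that $G\join H$ contains every edge of $G$, every edge of $H$, and additionally an edge $\{x,y\}$ for every $x\in V(G)$ and $y\in V(H)$. I want to bound the shortest-path distance between any two vertices from above by $2$.

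First I would dispose of the trivial case $a=b$, where $d(a,b)=0$. Next, suppose $a$ and $b$ lie on opposite sides of the join, say $a\in V(G)$ and $b\in V(H)$; then $\{a,b\}$ is an edge of $G\join H$ by construction, so $d(a,b)=1$. This already shows the second assertion: if $d(a,b)=2$ then necessarily $a$ and $b$ belong to the same factor.

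It remains to handle the case $a,b\in V(G)$ with $a\neq b$ (the case $a,b\in V(H)$ being symmetric). If $\{a,b\}\in E(G)$ then $d(a,b)=1$. Otherwise, use the hypothesis $H\neq\emptyset$ to pick any vertex $c\in V(H)$; the edges $\{a,c\}$ and $\{c,b\}$ both exist in $G\join H$, so $a,c,b$ is an edge path of length $2$, giving $d(a,b)\leq 2$. Combined with $a\neq b$ and non-adjacency, $d(a,b)=2$.

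There is no substantive obstacle: the argument is a direct case analysis, and the only place where a hypothesis is actually used beyond the definition of the join is the non-emptiness of $H$ (or $G$), which is precisely what guarantees the existence of a detour vertex in the opposite factor. Putting the cases together yields that $d(a,b)\in\{0,1,2\}$ and that $d(a,b)=2$ can only occur when $a$ and $b$ lie in the same factor.
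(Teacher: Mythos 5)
Your proof is correct and is exactly the case analysis the paper has in mind: the paper states this lemma without proof as ``an immediate consequence of the definition of $G\join H$'', and your argument (including the correct use of the standing hypothesis $G,H\neq\emptyset$ to produce the detour vertex) simply makes that immediate consequence explicit.
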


For $i$ in the range $0\leqslant i\leqslant l-1$, let $F^i_\ast$ denote
the subcomplex of $\MC_{\ast,l}(G\join H)$ spanned by generators
$(x_0,\ldots,x_k)$ satisfying $d(x_j,x_{j+1})=2$ for some
$j\leqslant i$.  Thus
\[
	F^0_\ast\subset F^1_\ast\subset\cdots
	\subset F^{l-1}_\ast\subset \MC_{\ast,l}(G\join H).
\]
Observe that $F^{l-1}_\ast$ is simply the span of all generators
such that $k<l$.  Thus
\[
	F^{l-1}_k 
	= \begin{cases}
		\MC_{k,l}(G\join H) & \text{ if } k<l
		\\
		0 & \text{ if } k=l.
	\end{cases}
\]

\begin{lemma}\label{quotients:lemma}
For $i=0,\ldots,l-1$ the inclusion
$F^i_\ast/F^{i-1}_\ast \hookrightarrow \MC_{\ast,l}(G\join H)/F^{i-1}_\ast$
induces the zero map in homology.
\end{lemma}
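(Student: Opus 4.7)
The plan is to prove this by constructing an explicit chain nullhomotopy of the inclusion modulo $F^{i-1}_{\ast}$. Since $G,H\neq\emptyset$, fix basepoints $g_{0}\in V(G)$ and $h_{0}\in V(H)$ once and for all. Let $\alpha=(x_{0},\ldots,x_{k})$ be a generator representing a class in $F^{i}_{k}/F^{i-1}_{k}$, so $d(x_{j},x_{j+1})=1$ for $j<i$ and $d(x_{i},x_{i+1})=2$. The preceding lemma says that $d(x_{i},x_{i+1})=2$ forces $x_{i}$ and $x_{i+1}$ to lie on a common side of the join, so if $y$ denotes the basepoint of the opposite side then $d(x_{i},y)=d(y,x_{i+1})=1$. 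The natural ansatz for a chain homotopy is
\[
s(\alpha)=(-1)^{i}(x_{0},\ldots,x_{i},y,x_{i+1},\ldots,x_{k}),
\]
extended $\mathbb{Z}$-linearly into $\MC_{\ast+1,l}(G\join H)/F^{i-1}_{\ast}$.

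Computing $(\partial s+s\partial)(\alpha)$ modulo $F^{i-1}$ one finds the following. The face $\partial_{i+1}$ of $s(\alpha)$ removes the inserted $y$ and returns $\alpha$ with sign $+1$, producing the desired identity term. Faces $\partial_{j}$ of $s(\alpha)$ with $j\leqslant i$ either fail length preservation (since the join has diameter at most $2$) or produce a chain with a distance-$2$ jump at a position strictly less than $i$ and so land in $F^{i-1}$. For $j\geqslant i+3$ the face $\partial_{j}$ of $s(\alpha)$ and the chain $s(\partial_{j-1}\alpha)$ coincide up to an overall sign reversal, so these contributions cancel pairwise against the corresponding terms of $s\partial\alpha$.

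The main obstacle is the single remaining face $\partial_{i+2}$ of $s(\alpha)$, which removes $x_{i+1}$: when $d(y,x_{i+2})=2$---that is, when $x_{i+2}$ lies on the same side as $y$ at distance $2$ from $y$ there---this face produces a leftover $-(x_{0},\ldots,x_{i},y,x_{i+2},\ldots,x_{k})$ whose first distance-$2$ jump sits at position $i+1$, so it lies in $F^{i+1}\setminus F^{i}$ rather than in $F^{i-1}$. To absorb these stragglers I iterate: apply the same basepoint-insertion trick to the leftover, inserting the opposite-side basepoint between $y$ and $x_{i+2}$, thereby producing a new leftover whose first distance-$2$ jump has moved one more position to the right. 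Because the total length $l$ is finite, this process must terminate after at most $l-i-1$ steps in a residual chain that either vanishes outright or lies in $F^{i-1}$. Summing all the iterated insertion operators gives a chain homotopy $S$ satisfying $\partial S+S\partial\equiv\iota\pmod{F^{i-1}}$, and the conclusion follows.

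The hardest part will be the sign bookkeeping across the iterated insertions together with verifying at each step that the inserted basepoint is distinct from its immediate neighbours, so that we really do land on a valid generator of the magnitude chain complex.
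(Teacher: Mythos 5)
Your proposal is correct and is essentially the paper's own argument: the iterated opposite-side basepoint insertions you describe are exactly the operators $s_i$ of Lemma~\ref{AB:lemma}, whose alternating sum is the paper's chain homotopy, and the residue you isolate at the face $\partial_{i+2}$ is precisely what forces the iteration there. The only organizational difference is that the paper first splits $F^i_\ast/F^{i-1}_\ast$ as a direct sum of suspended complexes $A_\ast(x,l-i)\hookrightarrow B_\ast(x,l-i)$ indexed by the prefix (Lemma~\ref{commutative:lemma}) and controls the admissible insertion depth with the ``height'' function of Lemma~\ref{height:lemma}, which is the bookkeeping device that makes your required vanishings $s_m\circ\partial_j=0$ go through.
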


\begin{proof}[Proof of Theorem~\ref{join:theorem}, 
assuming Lemma~\ref{quotients:lemma}]
Let us first prove by induction on $i=0,\ldots, l-1$
that the inclusion $F^i_\ast\hookrightarrow \MC_{\ast,l}(G\join H)$
induces the zero map in homology.
The initial case $i=0$ is an instance of Lemma~\ref{quotients:lemma}.
Assuming that the claim holds for $i$, let us prove it for $i+1$.
Since by hypothesis the inclusion 
$F^i_\ast\hookrightarrow \MC_{\ast,l}(G\join H)$
induces the zero map in homology, it follows that the quotient
$\MC_{\ast,l}(G\join H)\to \MC_{\ast,l}(G\join H)/F^i_\ast$
induces an injection in homology.  It will therefore suffice to
prove that the composite
\[
	F^{i+1}_\ast
	\to
	\MC_{\ast,l}(G\join H)
	\to 
	\MC_{\ast,l}(G\join H)/F^i_\ast
\]
induces the zero map in homology.  But this composite
can be rewritten as the composite
\[
	F^{i+1}_\ast
	\to
	F^{i+1}_\ast/F^i_\ast
	\to 
	\MC_{\ast,l}(G\join H)/F^i_\ast
\]
in which the second map induces the zero map in homology
by Lemma~\ref{quotients:lemma}.

Since $F^{l-1}_\ast \hookrightarrow \MC_{\ast,l}(G\join H)$
is an isomorphism in degrees $k<l$, and induces the zero
map in homology, it follows that $\MH_{k,l}(G\join H)=0$
for $k<l$.
\end{proof}

We now work towards a proof of Lemma~\ref{quotients:lemma}.
Given a vertex $x$ of $G\join H$,
denote by $A_\ast(x,l)$ the subcomplex
of $\MC_{\ast, l}(G\join H)$ generated by
the tuples of the form $(x,x_1,\ldots,x_k)$ 
with $d(x,x_1)=2$,
and denote by $B_\ast(x,l)$ the subcomplex
of $\MC_{\ast, l}(G\join H)$ generated
by the tuples of the form $(x,x_1,\ldots,x_k)$.

\begin{lemma}\label{commutative:lemma}
There is a commutative diagram
\[\xymatrix{
	\bigoplus\Sigma^iA_\ast(x_i,l-i)
	\ar@{^{(}->}[r]
	\ar[d]_\alpha^\cong
	&
	\bigoplus\Sigma^iB_\ast(x_i,l-i)
	\ar[d]^\beta
	\\
	F^i_\ast/F^{i-1}_\ast
	\ar@{^{(}->}[r]
	&
	\MC_{\ast,l}(G\join H)/F^{i-1}_\ast
}\]
in which the direct sums are indexed by tuples $(x_0,\ldots,x_i)$
of vertices of $G\join H$ satisfying $d(x_{j-1},x_j)=1$ for $j=1,\ldots,i$,
and in which the upper map is the direct sum of the inclusion maps.
\end{lemma}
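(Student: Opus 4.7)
The plan is to define both $\alpha$ and $\beta$ by the same ``prepending'' formula. Given an indexing tuple $\underline{x}=(x_0,\ldots,x_i)$ with $d(x_{j-1},x_j)=1$ for $1\leqslant j\leqslant i$, and a generator $\tau=(x_i,z_1,\ldots,z_{m-i})$ of $B_\ast(x_i,l-i)$ sitting in degree $m$ of the suspension $\Sigma^i B_\ast(x_i,l-i)$, the component of $\beta$ indexed by $\underline{x}$ sends $\tau$ to a sign times $(x_0,\ldots,x_{i-1},x_i,z_1,\ldots,z_{m-i})$, viewed in $\MC_{m,l}(G\join H)/F^{i-1}_\ast$. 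The length check is immediate since $i+(l-i)=l$, and restricting $\beta$ to the subcomplex $A_\ast(x_i,l-i)\subset B_\ast(x_i,l-i)$ defines $\alpha$; its image lands in $F^i_\ast/F^{i-1}_\ast$ precisely because on $A$ we have $d(x_i,z_1)=2$. Commutativity of the square is then immediate from the definition.

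To see that $\alpha$ is a bijection on generators (and hence an isomorphism of graded abelian groups), note that a generator of $F^i_m/F^{i-1}_m$ is precisely a tuple $(y_0,\ldots,y_m)$ satisfying $d(y_{j-1},y_j)=1$ for $1\leqslant j\leqslant i$ (otherwise it already lies in $F^{i-1}_\ast$) together with $d(y_i,y_{i+1})=2$ (to land in $F^i_\ast$). Such a tuple factors uniquely as the indexing data $(y_0,\ldots,y_i)$ together with the generator $(y_i,y_{i+1},\ldots,y_m)$ of $A_{m-i}(y_i,l-i)$.

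The substantive step is the chain-map property. I claim that, modulo $F^{i-1}_\ast$, the face operators $\partial_1,\ldots,\partial_i$ vanish on every prepended tuple $(x_0,\ldots,x_{i-1},x_i,z_1,\ldots,z_{m-i})$. For $1\leqslant j<i$, a nonzero contribution from $\partial_j$ would require $d(x_{j-1},x_{j+1})=d(x_{j-1},x_j)+d(x_j,x_{j+1})=2$, so that the resulting tuple has a distance-$2$ step at position $j-1\leqslant i-2$, landing it in $F^{i-1}_\ast$. For $j=i$, length preservation forces $d(x_{i-1},z_1)=1+d(x_i,z_1)$; the value $d(x_i,z_1)=2$ would make $d(x_{i-1},z_1)=3$, contradicting the fact that $G\join H$ has diameter at most $2$, while the value $d(x_i,z_1)=1$ yields a resulting tuple with a distance-$2$ step at position $i-1$, again in $F^{i-1}_\ast$. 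Consequently, in the quotient, the differential on the prepended tuple reduces to $\sum_{j=i+1}^{m-1}(-1)^{j-1}\partial_j$, which after reindexing $r=j-i$ equals $(-1)^i$ times the prepending of the $B$-differential of $\tau$; this sign is absorbed by choosing the prepending sign to be $(-1)^{im}$ in degree $m$.

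The main obstacle is the case analysis in the previous paragraph, which exploits crucially the diameter-$2$ property of a join; once the vanishing of $\partial_1,\ldots,\partial_i$ modulo $F^{i-1}_\ast$ is established, all remaining content of the lemma is essentially bookkeeping.
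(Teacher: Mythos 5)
Your proposal is correct and follows essentially the same route as the paper: both define $\alpha$ and $\beta$ by prepending the indexing tuple with the sign $(-1)^{ik}$ in degree $k$, observe that $\alpha$ restricts to a bijection on generators, and establish the chain-map property by showing that $\partial_1,\ldots,\partial_i$ vanish modulo $F^{i-1}_\ast$ while the remaining face maps intertwine up to the factor $(-1)^i$ absorbed by the prepending sign. If anything, your case analysis for the vanishing of $\partial_1,\ldots,\partial_i$ (using the diameter-$2$ property of the join) spells out a step the paper merely asserts.
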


\begin{proof}
We define $\alpha$ on the summand
corresponding to the tuple $(x_0,\ldots,x_i)$
to be the map
\[
	\bar\alpha\colon
	\Sigma^iA_\ast(x_i,l-i)
	\longrightarrow 
	F^i_\ast/F^{i-1}_\ast
\]
that sends a generator $(x_i,\ldots,x_k)$ 
to $(-1)^{ik}(x_0,\ldots,x_i,\ldots,x_k)$.
To see that $\bar\alpha$ is a chain map,
observe that in degree $k$ the differential
on $\Sigma^iA_\ast(x_i,l-i)$ is the sum 
$\sum_{j=1}^{k-i-1}(-1)^j\partial_j$,
while on $F^i_\ast/F^{i-1}_\ast$ it is the sum
$\sum_{j=1}^{k-1}(-1)^j\partial_j$.
However on $F^i_\ast/F^{i-1}_\ast$ the maps $\partial_1,\ldots,\partial_i$
all vanish, and in addition one can verify directly that
$\bar\alpha\circ\partial_j = (-1)^i\partial_{i+j}\circ\bar\alpha$ for
$j=1,\ldots,k-i-1$.
It follows that $\bar\alpha$ is indeed a chain map.
To see that $\alpha$ is an isomorphism, observe that the
generators of $F^i_\ast/F^{i-1}_\ast$ are precisely the
tuples $(x_0,\ldots,x_k)$ in which $d(x_0,x_1)=\cdots=d(x_{i-1},x_i)=1$
and $d(x_i,x_{i+1})=2$, 
so that $\alpha$ in fact restricts to a bijection
between the generators of its domain and range.
The chain map $\beta$ is obtained in an entirely analogous
way, and commutativity of the resulting square is then evident.
\end{proof}

\begin{lemma}\label{AB:lemma}
The inclusion $A_\ast(x,l)\hookrightarrow B_\ast(x,l)$
induces the trivial map in homology.
\end{lemma}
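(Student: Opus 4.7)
The plan is to construct, for each cycle $c \in A_k(x, l)$, an explicit bounding chain $\xi \in B_{k+1}(x, l)$. Without loss of generality assume $x \in G$, and pick a vertex $y \in H$, available by the hypothesis $H \neq \emptyset$. The starting observation is that any generator $(x, x_1, \ldots, x_k)$ of $A_*(x, l)$ forces $d(x, x_1) = 2$ and hence $x_1 \in G$, so $d(x, y) = d(y, x_1) = 1$ and inserting $y$ between $x$ and $x_1$ produces a length-preserving $(k+2)$-tuple in $B_{k+1}(x, l)$.

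First I would define the insertion map $s\colon A_k(x, l) \to B_{k+1}(x, l)$ on generators by $s(x, x_1, \ldots, x_k) = \pm(x, y, x_1, \ldots, x_k)$, with a sign convention chosen so that it behaves as a chain homotopy. A direct face-by-face computation, using the crucial fact that $\partial_1$ vanishes identically on $A_*(x, l)$ (the join has diameter at most $2$, so $d(x, x_2) = d(x, x_1) + d(x_1, x_2) \geq 3$ is impossible), yields the identity
\[
	\partial s(c) + s(\partial c) = c - D(c),
\]
where the correction $D(c) = \partial_2(s(c))$ contributes only when $x_2 \in H$ and $d(y, x_2) = 2$, in which case $D(c) = (x, y, x_2, \ldots, x_k)$. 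On a cycle this reads $c - D(c) = \partial s(c)$, and a quick check using $\partial c = 0$ shows $D(c)$ is itself a cycle in $B_*(x,l)$.

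The key structural point is that $D(c)$ is the image, under prepending the letter $x$, of a cycle $\overline{D(c)} = (y, x_2, \ldots, x_k) \in A_{k-1}(y, l-1)$, and that prepending is a chain map up to sign from $A_*(y, l-1)$ into $B_*(x, l)$ --- the boundary term that would spoil this comes from $\partial_1$ of the prepended tuple, which vanishes by the same join-diameter inequality applied to the prefix $(x, y, z_1)$ with $d(y, z_1) = 2$. I would then proceed by induction on $l$, the base cases $l \leq 1$ being trivial since $A_*(\cdot, l) = 0$. For the inductive step, applying the hypothesis to $\overline{D(c)}$ produces an $\eta \in B_*(y, l-1)$ with $\partial \eta = \overline{D(c)}$; prepending $x$ yields a chain $\xi'$ whose boundary is $D(c)$ plus a correction $E(\eta) \in A_*(x, l)$ arising from the non-chain-map part of prepending, namely the $\partial_1$ contribution on tuples of the form $(x, y, z_1, \ldots)$ with $z_1 \in G$ and $d(y, z_1) = 1$. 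One verifies that $E(\eta)$ is itself a cycle in $A_*(x, l)$, and it is killed by a further, parallel application of the same procedure.

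The main obstacle is bookkeeping: tracking signs through the successive insertions, face maps and prependings, and verifying that the recursive treatment of $E(\eta)$ terminates. Termination is guaranteed because each recursive application strictly decreases either the length parameter or the number of leading distance-two steps before the first length-preserving correction is needed. Assembling the pieces produces a single $\xi \in B_{k+1}(x, l)$ with $\partial \xi = c$, so every cycle in $A_*(x, l)$ bounds in $B_*(x, l)$, which is exactly the claim that the inclusion induces the zero map on homology.
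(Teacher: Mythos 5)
Your reduction is sound up to its final step, and that final step is where the essential difficulty of the lemma lives; as written it is a genuine gap. The identity $\partial s+s\partial=\mathrm{id}-D$ with $D=\partial_2\circ s$ is correct (your $s$ is exactly the map $s_1$ in the paper's proof), as are the identification of $D(c)$ with $x$ prepended to a cycle $\overline{D(c)}\in A_{k-1}(y,l-1)$, the appeal to induction on $l$ to find $\eta$ with $\partial\eta=\overline{D(c)}$, and the formula $\partial(x\cdot\eta)=E(\eta)-x\cdot\partial\eta$ yielding $c=E(\eta)+\partial\bigl(s(c)-x\cdot\eta\bigr)$. The problem is that $E(\eta)=\partial_1(x\cdot\eta)$ is a cycle in $A_k(x,l)$ with the \emph{same} length parameter $l$ and the \emph{same} homological degree $k$ as $c$, so ``a further, parallel application of the same procedure'' merely trades one cycle of $A_k(x,l)$ for another homologous to it in $B_\ast(x,l)$, and nothing forces this to stop. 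Your termination measure is not controlled: the inductive hypothesis asserts only that \emph{some} bounding chain $\eta$ exists and says nothing about the shape of its generators, so the generators $(x,z_1,z_2,\ldots)$ of $E(\eta)$ satisfy only $d(x,z_1)=2$ and $d(y,z_1)=1$, while $d(y,z_2)$, the number of leading distance-two steps, and every other candidate invariant remain completely unconstrained and can be just as large as for $c$.

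This unbounded cascade of corrections is precisely what the paper's argument is built to tame. It introduces the \emph{height} of a generator (the length of the maximal alternating prefix $d(x,x_1)=2$, $d(y,x_2)=2$, $d(x,x_3)=2,\ldots$), the insertions $s_i$ replacing the first $i$ entries by the string $x,y,x,y,\ldots$, and Lemma~\ref{height:lemma}, which shows that $\partial_j$ strictly lowers height below $j$; the finite sum $s=\sum_{i\geqslant 1}(-1)^is_i$ is then an honest chain homotopy from the inclusion to zero, each $s_{i+1}$ absorbing the correction left by $s_i$. Your scheme can be repaired, but only by strengthening the inductive statement so that it controls the bounding chain: for instance, require that every generator of $\eta$ have second entry equal to the auxiliary vertex chosen at that stage (namely $x$ itself, at the stage for $y$). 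Then every generator of $x\cdot\eta$ has the form $(x,y,x,\ldots)$, so $\partial_1(x\cdot\eta)=0$ because deleting the $y$ creates a repeated entry and fails to preserve length, hence $E(\eta)=0$ and the recursion closes after one step; one checks that $s(c)-x\cdot\eta$ again has all second entries equal to $y$, so the strengthened statement propagates. Unwinding that strengthened induction reconstructs exactly the paper's alternating-prefix homotopy, so you should either carry it out explicitly or adopt the height-function argument directly.
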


\begin{proof}[Proof of Lemma~\ref{quotients:lemma}, 
assuming Lemma~\ref{AB:lemma}]
Since the upper arrow of the commutative diagram of 
Lemma~\ref{commutative:lemma} induces the zero map
in homology, so does the lower arrow.
\end{proof}

We now work towards the proof of Lemma~\ref{AB:lemma}.
In order to do so, we assume without loss that $x\in G$, 
and we fix a vertex $y\in H$.
Then we define the \emph{height} of a generator 
$(x,x_1,\ldots,x_k)$ of $A_\ast(x,l)$
to be the largest integer $h$ such that
\[
	d(x,x_1)=2,\ 
	d(y,x_2)=2,\ 
	d(x,x_3)=2,\ 
	\ldots\ 
	d(-,x_h)=2
\]
where the final $-$ denotes $x$ if $h$ is odd and $y$ if $h$ is even.
Thus all generators have height at least $1$, and the height
of a generator is no more that its degree.

\begin{lemma}\label{height:lemma}
If $(x,x_1,\ldots,x_k)$ is a generator of $A_\ast(x,l)$
then $\partial_j(x,x_1,\ldots,x_k)$ is either $0$ or a
generator of height at most $j-1$.
\end{lemma}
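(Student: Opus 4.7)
The plan is to exploit two basic facts about the join $G\join H$: distances take only the values $0,1,2$, and $d(a,b)=2$ forces $a$ and $b$ into the same part. First I would extract the structural consequences of $\partial_j(x,x_1,\ldots,x_k)\neq 0$. Nonvanishing of $\partial_j$ requires
\[
d(x_{j-1},x_j)+d(x_j,x_{j+1})=d(x_{j-1},x_{j+1}),
\]
and since the summands on the left are each at least $1$ while the right side is at most $2$, we must have $d(x_{j-1},x_j)=d(x_j,x_{j+1})=1$ and $d(x_{j-1},x_{j+1})=2$. In particular, $x_{j-1}$ and $x_{j+1}$ lie in the same part of the join.

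This immediately disposes of the case $j=1$: it would force $d(x,x_1)=1$, contradicting $d(x,x_1)=2$, which holds by definition of $A_\ast(x,l)$. Hence $\partial_1=0$, consistent with the claim. For $j\geq 2$, the resulting tuple still has $x_1$ in its second slot, so $d(x,x_1)=2$ and the tuple is genuinely a generator of $A_\ast(x,l)$; what remains is to bound its height by $j-1$.

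For the height bound, write $z_i$ for the alternating sequence of base vertices ($z_i=x$ for odd $i$, $z_i=y$ for even $i$), so consecutive $z_i$'s lie in opposite parts. Let $h$ be the height of the original tuple. If $h\leq j-2$, then the first failure of the height condition occurs at a position $\leq j-1$, which is unaffected by deleting $x_j$, so the new height is at most $h\leq j-1$. If instead $h\geq j-1$, the conditions at positions $1,\ldots,j-1$ are inherited by the new tuple, and the question reduces to position $j$, which is now occupied by $x_{j+1}$. Here, $d(z_{j-1},x_{j-1})=2$ (from $h\geq j-1$) shows that $x_{j-1}$ lies in the same part as $z_{j-1}$, hence opposite to $z_j$; combined with the same-part conclusion for $\{x_{j-1},x_{j+1}\}$, we get $x_{j+1}$ in the opposite part from $z_j$, which forces $d(z_j,x_{j+1})=1\neq 2$. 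The height of the new tuple is therefore at most $j-1$, as required. The main obstacle is nothing more than disciplined bookkeeping about which part each vertex lies in; the argument rests essentially on the single observation that $d=2$ forces same-part membership, together with the fact that the alternating base vertices $z_i$ swap parts at each step.
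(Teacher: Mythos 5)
Your proof is correct and rests on exactly the same observation as the paper's: nonvanishing of $\partial_j$ forces $x_{j-1}$ and $x_{j+1}$ into the same part of the join, while the height conditions at positions $j-1$ and $j$ would place them in opposite parts via the alternating base vertices. The paper phrases this as a single proof by contradiction, whereas you organise it as a direct case split on the original height (plus the explicit $j=1$ check), but the mathematical content is identical.
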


\begin{proof}
Suppose not.  
Since $\partial_j(x,x_1,\ldots,x_k)$ is nonzero,
it follows that 
\[d(x_{j-1},x_j)=1,\  d(x_j,x_{j+1})=1\ 
\text{and}\  d(x_{j-1},x_{j+1})=2.\]
 In particular
$x_{j-1}$ and $x_{j+1}$ both lie in $G$ or 
both lie in $H$.
On the other hand, since $\partial_j(x,x_1,\ldots,x_k)$
has height at least $j$, then (assuming without loss
that $j$ is even) we have that 
$d(x,x_{j-1})=2$ and $d(y,x_{j+1})=2$, so that $x_{j-1}$
lies in $G$ and $x_{j+1}$ lies in $H$.  This is a contradiction.
\end{proof}

\begin{proof}[Proof of Lemma~\ref{AB:lemma}]
For $i\geqslant 1$, let $s_i\colon A_\ast(x,l)\to B_{\ast+1}(x,l)$
be the map defined on generators by the rule
\[
	s_i(x,x_1,\ldots,x_k)
	=
	\begin{cases}
		(\underbrace{x,y,x,y,\ldots,}_{i+1\text{ terms}}x_i,\ldots,x_k)
		&
		\text{if }i\leqslant h 
		\\
		0 
		&
		\text{if }i>h
	\end{cases}
\]
where $h$ denotes the height of $(x,x_1,\ldots,x_k)$.
In the first case, the assumption on the height 
guarantees that the term on the right has length exactly $l$.
We have the following compatibilities between the $s_i$
and the operators $\partial_j$.
\begin{itemize}
	\item
	$\partial_j\circ s_i = 0$ for $1\leqslant j<i$ 
	\item
	$\partial_{i+1}\circ s_i = \partial_{i+1}\circ s_{i+1}$
	for $1\leqslant i$
	\item
	$\partial_j\circ s_i = s_i\circ\partial_{j-1}$ for $i\geqslant 1$
	and $j\geqslant i+2$ 
	\item
	$\partial_1\circ s_1$ is the inclusion $A_\ast(x,l)\hookrightarrow
	B_\ast(x,l)$
	\item
	$s_i\circ\partial_j=0$ for $1\leqslant j\leqslant i$
\end{itemize}
The first four follow by direct computation and the last
follows from Lemma~\ref{height:lemma}.  

Now define $s\colon A_\ast(x,l)\to B_{\ast+1}(x,l)$ by
$s=\sum_{i\geqslant 1}(-1)^i s_i$.
We claim that $s$ is a chain homotopy from the inclusion map
to the zero map, or in other words that $s\circ\partial+\partial\circ s$
is the inclusion.  From the properties listed above we have
\[
	\partial\circ s
	= 
	\partial_1\circ s_1
	-\sum_{j\geqslant i+1,\,i\geqslant 1}(-1)^{i+j}s_i\circ\partial_j
\]
and 
\[
	s\circ\partial
	=
	\sum_{j\geqslant i+1, i\geqslant 1}(-1)^{i+j}s_i\circ\partial_j.
\]
It follows that $\partial\circ s + s\circ\partial = \partial_1\circ s_1$,
which is the inclusion map.
\end{proof}

\appendix
\section{Numerical examples}
\label{appendix}
In this appendix we give various further computer calculated 
examples of ranks of magnitude homology groups.  The reader can verify these computations using  
\verb!rational_graph_homology_arxiv.py!, written in sage and python, which was uploaded 
to the arXiv with this paper.

\subsection{Cyclic graphs}
\label{section:CyclicGraphs}
Cyclic graphs seem to have a clear pattern in their graph homology, and this pattern depends on whether the graph has an even or an odd number of vertices.  

First consider graphs with odd numbers of vertices as exemplified by the $7$-cyclic graph below.  (The 5-cyclic graph was given in Table~\ref{TableFiveCycle}.)
\begin{center}
\begin{tikzpicture}[baseline=0cm,scale=0.5]
\foreach \x in {0,51.43,...,308.58}
    \draw (\x+90:2cm) -- (\x+141.43:2cm);
%\draw (0:1cm) -- (180:1cm);
\foreach \x in {0,51.43,...,308.58}
    \draw [fill=red](\x+90:2cm) circle (0.1cm);
\end{tikzpicture}
\quad
\footnotesize
\begin{tabular}{rrrrrrrrrrrrr}
&0&1&2&3&4&5&6&7&8&9&10&11\\
\otoprule                                                                 
0 & 7\\
 1   & &     14                   \\                                          
 2     & &&         14             \\             
 3&     & &&         14             \\                                          
 4    &&    &      14  &&  14       \\                                          
 5         &&&&42& &  14  \\                                         
 6          &&&&&70&   & 14                          \\           
 7      &    &&&&&             98&   & 14                    \\           
 8          &&&&&28&&126&&  14              \\           
 9            &&&&&& 112 &&154&  &  14 \\            
10           &&&&&&& 252 && 182 & &   14 \\      
11           &&&&&&&&  448 && 210 &  &  14\\ 
\bottomrule
\smallskip
\end{tabular}
\end{center}
The behaviour for a cyclic graph with $n$ vertices where $n$ is odd,  seems to be as follows.  
The non-zero ranks are ordered in diagonal lines, so in the above the first diagonal starts at $k=0$, $l=0$ with $7,14,14,\dots$ and the second diagonal starting at $k=2$, $l=4$ with $14,42,70,\dots$: 
in general the $i$th diagonal starts at $k=2(i-1)$ and $l=(i-1)(n+1)/2$.   
Denote by $T^{n}_{i,j}$ the $j$th non-zero entry in the $i$th diagonal, then it would appear that these are given by the following recursion relation.
  \[T^{n}_{1,1}=n;\quad T^{n}_{1,2}=2n;\quad T^{n}_{i,j}=T^{n}_{i,j-1}+2 T^{n}_{i-1,j}.\]
%The ranks in the $i$th diagonal are all divisible by $n\cdot2^{i-1}$; 
%denote by $T_{i,j}$ the $j$th non-zero entry in the $i$th diagonal divided by  $n\cdot2^{i-1}$.  The resulting $T_{i,j}$ seem to be independent of $n$ and to satisfy a recursion relation.  From $n=5$ we the following table for $T_{i,j}$.
%\begin{center}\footnotesize
%\begin{tabular}{rrrrrrrrrrrrrrr}
%&&1&2&3&4&5&6&7&8&9&10&11\\
%\otoprule        
%1&&1&2&2&2&2&2&2&2&2&2&2\\
%2&&1&3&5&7&9&11&13&15&17\\
%3&&1&4&9&16&25&36\\
%4&&1&5&14\\
%\bottomrule
%\end{tabular}
%\smallskip
%\end{center}
%Another way of saying this is that
 %$T_{1,1}=1$ and $T_{1,j}=2$ for $j>1$, whilst $T_{i,j}=T_{i,j-1}+T_{i-1,j}$.
If that is the case then $T^{n}_{i,j}/(n 2^{i-1})$ is an integer independent of $n$.

On the other hand, the even case, as exemplified by the $8$-cycle graph below, appears more straightforward.
\begin{center}
\begin{tikzpicture}[baseline=0cm,scale=0.5]
\foreach \x in {0,45,...,315}
    \draw (\x+90:2cm) -- (\x+45:2cm);
%\draw (0:1cm) -- (180:1cm);
\foreach \x in {0,45,...,315}
    \draw [fill=red](\x+90:2cm) circle (0.1cm);
\end{tikzpicture}
\quad
\footnotesize
\begin{tabular}{rrrrrrrrrrrrr}
&0&1&2&3&4&5&6&7&8&9&10\\
\otoprule                                                                 
0 & 8\\
 1   & &     16                  \\                                          
 2     & &&         16             \\             
 3&     & &&         16             \\                                          
 4    &&    &      8  &&  16       \\                                          
 5         &&&&16& &  16  \\                                         
 6          &&&&&16&   & 16                          \\           
 7      &    &&&&&             16&   & 16                    \\           
 8          &&&&&  8 && 16 &&  16              \\           
 9            &&&&&& 16 &&  16  &  &  16 \\            
10           &&&&&&& 16 &&  16  & &   16 \\      
%&11           &&&&&&&&  16 && 16 &  &  16\\ 
\bottomrule
\smallskip
\end{tabular}
\end{center}

It appears that for an $n$-cycle graph with $n$ even, the first rank in each diagonal is $n$ and the subsequent ranks are $2n$.
The $i$th diagonal starts at $k=2(i-1)$ and $l=(i-1)n/2$.

\subsection{Projecting is necessary for Mayer-Vietoris}
\label{section:projectingnecessary}
This example shows the necessity of the projecting condition in the Mayer-Vietoris short exact sequence of a convex decomposition of a graph.
  Consider the graph $X$ pictured below.  
  This is the union of two $5$-cycle graphs along a common edge, i.e.~along a $2$-cycle.  
  This is a convex decomposition of the graph, however, neither $5$-cycle is projecting, as the `apex' of each $5$-cycle can't project.  
If this graph did have a Mayer-Vietoris short exact sequence then for each $k$ and $l$ we would have \[\rank\MH_{k,l}(X)=2\cdot\rank\MH_{k,l}(C_{5})-\rank\MH_{k,l}(C_{2}).\]  
The $2$-cycle is diagonal with $\rank\MH_{k,k}(C_{2})=2$ for all $k$.  
Comparing the table of ranks below with that for the $5$-cycle in Table~\ref{TableFiveCycle} we see that the first two diagonals are as you would expect if the above equation were satisfied, however the third diagonal is wrong, with the first differing entry being  $\rank\MH_{2,4}(X)=2$.
%\begin{table}[h]
\begin{center}
\begin{tikzpicture}[baseline=0cm,scale=0.5]
\foreach \x in {0,45,...,315}
    \draw (\x:2cm) -- (\x+45:2cm);
\draw (0:2cm) -- (180:2cm);
\foreach \x in {0,45,...,315}
    \draw [fill=red](\x:2cm) circle (0.1cm);
\end{tikzpicture}
\quad
\footnotesize
\begin{tabular}{rrrrrrrrrrrr}
&0&1&2&3&4&5&6&7&8&9&10\\
\otoprule
 0&   8                                                             \\
 1  &&       18                                                     \\  
 2      &&&         18                                          \\       
 3             &&&  20   & 18                                  \\         
 4              &&&  2  &  60  &  18                           \\          
 5             &&&&        12  & 100 &   18                   \\            
 6                 &&&&&          76  & 140  &  18             \\            
 7             &&&&&               8  & 236  & 180  &  18     \\              
 8                &&&&&            2  &  56 &  492 &  220  &  18 \\            
 9                       &&&&&&          16  & 280  & 844  & 260 &   18\\       
10                          &&&&&&&             92  & 904&  1292 &  300  &  18 \\
\bottomrule
\smallskip
\end{tabular}
\end{center}
%\caption{The ranks of the magnitude homology groups of the pictured graph}
%\end{table}
%

\subsection{Some symmetric cubic graphs} 
Here we include some classical graphs for further examples.  
These graphs all have large symmetry groups, and 
these act on the magnitude homology groups.
Intriguingly the order of the automorphism group is showing up in the ranks of the homology group.
This might be simply indicating that the automorphism group is acting freely transitively on the generators of those magnitude homology groups.

Some of these graphs have patterns in the ranks of the magnitude homology groups reminiscent of those for the cyclic graphs.  We leave the reader to discover them.

The M\"obius Kantor graph and the Pappus graph illustrate that the rank can sometimes decrease as you move down a diagonal.
\subsubsection{Petersen graph} The automorphism group has order $120$.
\begin{center}
\begin{tikzpicture}[baseline=0cm]
  \tikzset{VertexStyle/.style= {shape=circle,  color=black, fill=red, inner sep=0pt, 
               minimum size = 0.1cm, draw}}
   \SetVertexNoLabel
   \tikzset{EdgeStyle/.style= {thick}} 
   \grPetersen[form=1,RA=1,RB=0.6]%
\end{tikzpicture}\quad
\footnotesize
\begin{tabular}{rrrrrrrrrr}
&0&1&2&3&4&5&6&7&8\\
\otoprule                                                                 
0 & 10\\
 1   & &     30                   \\                                          
 2     & &&         30             \\                                          
 3    &&&           120  &  30       \\                                          
 4         &&&&            480  &  30  \\                                         
 5          &&&&&                 840   & 30                          \\           
 6          &&&&&                 1440  &  1200   & 30                    \\           
 7           &&&&&&                      7200  &  1560  &  30              \\           
 8           &&&&&&&                           17280  & 1920 &   30      \\              
\bottomrule
\smallskip
\end{tabular}
\end{center}

\subsubsection{Heawood graph} The automorphism group has order $336$.
\begin{center}
\begin{tikzpicture}[baseline=0cm]
  \tikzset{VertexStyle/.style= {shape=circle,  color=black, fill=red, inner sep=0pt, 
               minimum size = 0.1cm, draw}}
   \SetVertexNoLabel
   \tikzset{EdgeStyle/.style= {thick}} 
   \grHeawood[RA=1]
\end{tikzpicture}\quad
\footnotesize
\begin{tabular}{rrrrrrrrrr}
&0&1&2&3&4&5&6&7&8\\
\otoprule                                                                 
0 & 14\\
 1   & &     42                   \\                                          
 2     & &&         42             \\                                          
 3    &&&           112  &  42       \\                                          
 4         &&&&            336  &  42  \\                                         
 5          &&&&&                 336   & 42                          \\           
 6          &&&&&                 896  &  336   & 42                    \\           
 7           &&&&&&                      2688  &  336  &  42              \\           
 8           &&&&&&&                           2688  & 336 &   42      \\              
\bottomrule
\smallskip
\end{tabular}
\end{center}

\subsubsection{Tutte Coxeter graph} The automorphism group has order $1440$.
\begin{center}
\begin{tikzpicture}[baseline=0cm]
  \tikzset{VertexStyle/.style= {shape=circle,  color=black, fill=red, inner sep=0pt, 
               minimum size = 0.1cm, draw}}
   \SetVertexNoLabel
   \tikzset{EdgeStyle/.style= {thick}} 
   \grLCF[RA=1]{-13,-9,7,-7,9,13}{5}
\end{tikzpicture}\quad
\footnotesize
\begin{tabular}{rrrrrrrrrrr}
&0&1&2&3&4&5&6&7&8\\
\otoprule                                                                 
0 & 30\\
 1   & &     90                  \\                                          
 2     & &&         90             \\             
 3&     & &&         90             \\                                          
 4    &&    &      480  &&  90       \\                                          
 5         &&&& 1440 & &  90  \\                                         
6          &&&&& 1440 &   & 90                          \\           
 7      &    &&&&&              1440 &   & \,\,\,90                  \\           
 8          &&&&&  7680 &&  1440  &&  \,\,\,90              \\           
\bottomrule
\smallskip
\end{tabular}
\end{center}

\enlargethispage*{4em}
\subsubsection{Moebius Kantor graph} The automorphism group has order $96$.
\begin{center}
\begin{tikzpicture}[baseline=0cm]
   \tikzset{VertexStyle/.style= {shape        = circle,
   color=black,fill=red,inner sep=0pt,
%                                      shading      = ball,
%                                      ball color   = blue!60,
                                       minimum size = 0.1cm,
                                  draw}}
   \SetVertexNoLabel
   \tikzset{EdgeStyle/.style= {thick}}
   \grMobiusKantor[RA=1]
\end{tikzpicture}\quad
\footnotesize
\begin{tabular}{rrrrrrrrrr}
&0&1&2&3&4&5&6&7&8\\
\otoprule                                                                 
0 & 16\\
 1   & &     48                   \\                                          
 2     & &&         48             \\                                          
 3    &&&           112  &  48       \\                                          
 4         &&&&            304  &  48  \\                                         
 5          &&&&          48&       288   & 48                          \\           
 6          &&&&&                 832  &  288   & 48                    \\           
 7           &&&&&&                      1952  &  288  &  48              \\           
 8           &&&&&&               656&         1776  & 288 &   48      \\              
\bottomrule
\smallskip
\end{tabular}
\end{center}

\subsubsection{Pappus graph} The automorphism group has order $216$.
\begin{center}
\begin{tikzpicture}[baseline=0cm]
\tikzset{VertexStyle/.style= {shape        = circle,
   color=black,fill=red,inner sep=0pt, minimum size = 0.1cm, draw}}
   \SetVertexNoLabel
   \tikzset{EdgeStyle/.style= {thick}}  
    \grPappus[RA=1]
\end{tikzpicture}\quad
\footnotesize
\begin{tabular}{rrrrrrrrrr}
&0&1&2&3&4&5&6&7&8\\
\otoprule                                                                 
0 & 18\\
 1   & &     54                   \\                                          
 2     & &&         54             \\                                          
 3    &&&           108  &  54       \\                                          
 4         &&&&            252  &  54  \\                                         
 5          &&&&          108&       216   & 54                          \\           
 6          &&&&&                 756  &  216   & 54                    \\           
 7           &&&&&&                      1188  &  216  &  54              \\           
 8           &&&&&&               1224&         972  & 216 &   54      \\              
\bottomrule
\smallskip
\end{tabular}
\end{center}

\subsection{The icosahedral graph} 
\label{section:icosahedron}
This is the only graph which our calculations show to be diagonal, but for which we know of no proof that it is diagonal.
\begin{center}
\begin{tikzpicture}[baseline=0cm]
\tikzset{VertexStyle/.style= {shape        = circle,
   color=black,fill=red,inner sep=0pt, minimum size = 0.1cm, draw}}
   \SetVertexNoLabel
   \tikzset{EdgeStyle/.style= {thick}}   
   \grIcosahedral[form=1,RA=1,RB=0.5]
\end{tikzpicture}\quad
\footnotesize
\begin{tabular}{rrrrrrrrr}
&0&1&2&3&4&5&6&7\\
\otoprule                                                                 
0 & 12\\
 1   & &     60                   \\                                          
 2     & &&         240             \\                                          
 3    &&&             &  912       \\                                          
 4         &&&&              &  3420  \\                                         
 5          &&&&          &          & 12780                          \\           
 6          &&&&&                   &     & 47712                    \\           
 7           &&&&&&                        &    &  178080              \\           
            
\bottomrule
\smallskip
\end{tabular}
\end{center}

\bibliographystyle{plain}
\bibliography{MagnitudeHomology}
\end{document}